\renewcommand\subsection{\@startsection{subsection}{2}%
  \z@{-.5\linespacing\@plus-.7\linespacing}{.5\linespacing}%
  {\normalfont\scshape}}
\renewcommand\subsubsection{\@startsection{subsubsection}{3}%
  \z@{.5\linespacing\@plus.7\linespacing}{-.5em}%
  {\normalfont\scshape}}
\newtheorem{prop}{Proposition}[section]
\newtheorem{theo}[prop]{Theorem}
\newtheorem{lemm}[prop]{Lemma}
\newtheorem{cor}[prop]{Corollary}
\theoremstyle{definition}  
\newtheorem{defi}[prop]{Definition}
\theoremstyle{remark}
\newtheorem{rem}[prop]{Remark}
\newcommand{\R}{{\mathbb R}}
\newcommand{\relmiddle}[1]{\mathrel{}\middle#1\mathrel{}}
\title[A maximal element of a moduli space of Riemannian metrics]{A maximal element of a moduli space of Riemannian metrics}
\author{
  Yuichiro Taketomi
}
\thanks{This work was supported by the Research Institute for Mathematical Sciences, an International Joint Usage/Research Center located in Kyoto University.
  This work was partly supported by Osaka Central Advanced Mathematical
Institute: MEXT Joint Usage/Research Center on Mathematics and Theoretical
Physics JPMXP0619217849. }
\begin{document}

\maketitle

\begin{abstract}
  For a given smooth manifold, we consider the moduli space of Riemannian metrics up to isometry and scaling.
One can define a preorder on the moduli space by the size of isometry groups.
We call a Riemannian metric that attains a maximal element with respect to the preorder a maximal metric.
Maximal metrics give nice examples of self-similar solutions for various metric evolution equations such as the Ricci flow.
In this paper, we construct many examples of maximal metrics on Euclidean spaces.
\end{abstract}

\section{Introduction}

Let $X$ be a connected smooth manifold.
Denote by $\mathfrak{M}(X)$ the set of all smooth Riemannian metrics on $X$.
Define an equivalent relation $\sim$ on $\mathfrak{M}(X)$ as follows:
there exists $\lambda >0$ such that $(X,\lambda g)$ and $(X,h)$ are isometric with each other.
Denote by $\mathfrak{M}(X)/_{\! \sim}$ the quotient space with respect to the equivalent relation $\sim$.
Note that the moduli space $\mathfrak{M}(X)/_{\! \sim}$ can be understood as the orbit space $(\R_{>0}\times \mathrm{Diff}(X))\backslash \mathfrak{M}(X)$.

The moduli space $\mathfrak{M}(X)/_{\! \sim}$ has often been 
considered to understand ``nice'' Riemannian metrics.
For examples, 
the normalized total scalar curvature
\begin{equation*}
  \tilde{\mathcal{S}} : \mathfrak{M}(X) \to \R, \quad g \mapsto \mathrm{vol}(X,g)^{\frac{2}{n}-1}\int_{X}\mathrm{scal}_{g}dV_{g}
\end{equation*}
has been studied actively for a compact manifold $X$.
Here, $\mathrm{vol}(X,g)$ is the Riemannian volume,
$\mathrm{scal}_{g}$ is the scalar curvature, and $dV_{g}$ is the Riemannian volume element.
For examples, Einstein metrics on a compact manifold $X$ are characterized by critical points of $\tilde{\mathcal{S}} : \mathfrak{M}(X) \to \R$.
Since the normalized total scalar curvature $\tilde{\mathcal{S}}$ is invariant under the action of $\R_{>0} \times \mathrm{Diff}(X)$ on $\mathfrak{M}(X)$,
$\tilde{\mathcal{S}}$ can be regarded as the function on the moduli space $\mathfrak{M}(X)/_{\! \sim}$.
Another important example is the Ricci flow
\begin{equation}
  \label{eq:0712133631}
    \frac{\partial}{\partial t}g_{t} = -2 \mathrm{Ric}_{g_{t}},
\end{equation}
where $\mathrm{Ric}_{g}$ is the Ricci tensor for a Riemannian metric $g$.
Since the Ricci tensor $\mathrm{Ric}_{g}$ is invariant under the scaling (\textit{i.e.} $\mathrm{Ric}_{cg} = \mathrm{Ric}_{g}$)
and diffeomorphisms (\textit{i.e.} $\mathrm{Ric}_{\varphi^{\ast}g} = \varphi^{\ast}\mathrm{Ric}_{g}$, where $\ast$ means the pullback),
the Ricci flow equation can be regarded as a flow on the moduli space $\mathfrak{M}(X)/_{\! \sim}$.
Then, for examples,  self-similar solutions of the Ricci flow are considered as stationary solutions of the corresponding flow on $\mathfrak{M}(X)/_{\! \sim}$.

Many ``nice'' metrics (\textit{e.g.} Einstein metrics, Ricci soliton) can be understood as distinguished points
on the moduli space $\mathfrak{M}(X)/_{\! \sim}$ with respect to some curvature condition.
In this paper, we introduce a class of nice Riemannian metrics, which is defined as
a special point on the moduli spaces $\mathfrak{M}(X)/_{\! \sim}$ with respect to the size of isometry groups. 
For $[g], [h] \in \mathfrak{M}(X)/_{\! \sim}$,
denote by $[g] \prec [h]$ if $\mathrm{Isom}(X,g) \subset \mathrm{Isom}(X,h^{\prime})$ for some $h^{\prime} \in [h]$.
Then $\prec$ defines an order on $\mathfrak{M}(X)/_{\! \sim}$.
Note that $\prec$ is not a partial order but a preorder.
That is,
$\prec$ satisfies reflexivity and transitivity, however, does not satisfy asymmetry.
\begin{defi}
 \label{defi:1207154844}
  We call a metric $g \in \mathfrak{M}(X)$ \textit{maximal} if the equivalent class $[g] \in \mathfrak{M}(X)/_{\! \sim}$ is a maximal element
  with respect to the preorder $\prec$ (\textit{i.e.} $[g] \prec [h]$ implies $[g] = [h]$). 
\end{defi}
Roughly speaking,
the order $\prec$ defines a barometer of the excellence of Riemannian metrics via the size of isometry groups,
and maximal metrics are ``maximally nice'' metrics with respect to this  barometer.

Note that
$g \in \mathfrak{M}(X)$ is a maximal metric if and only if
$g$ satisfies the following condition : 
\begin{equation}
  \label{eq:0114133517}
  \mathrm{Isom}(X,g) \subset \mathrm{Isom}(X,h) \ \Rightarrow \ [g] = [h]\quad (h \in \mathfrak{M}(X)).
\end{equation}

An important example of maximal metrics is an isotropy irreducible metric.
For more details, see Subsection~\ref{20220930164546}.

One of the nice motivation to study maximal metrics is that
$g$ give examples of self-similar solutions for various metric evolution equations
\begin{equation*}
  \frac{\partial}{\partial t} g_{t} = \mathcal{R}(g_{t})  \quad (g_{t} \in \mathfrak{M}(X)),
\end{equation*}
where $\mathcal{R}$ is a map from $\mathfrak{M}(X)$ to the set of all symmetric $(0,2)$-tensors $\mathfrak{S}(X)$.
We show that homogeneous maximal metrics are soliton for various metric evolution equations. For examples,
\begin{prop}
  \label{prop:0526134932}
A homogeneous maximal metric $g$ is a Ricci soliton.
\end{prop}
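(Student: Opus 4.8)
The plan is to run the Ricci flow \eqref{eq:0712133631} starting from $g$ and to exploit maximality to keep the solution inside the single class $[g]\in\mathfrak{M}(X)/_{\!\sim}$; this forces the flow to be self-similar, which is exactly saying that $g$ is a Ricci soliton. Nothing in the argument uses the Ricci tensor beyond its naturality under diffeomorphisms, so the same reasoning proves that a homogeneous maximal metric is a soliton for any evolution equation $\partial_t g_t=\mathcal{R}(g_t)$ whose right-hand side $\mathcal{R}\colon\mathfrak{M}(X)\to\mathfrak{S}(X)$ satisfies $\mathcal{R}(\varphi^{\ast}h)=\varphi^{\ast}\mathcal{R}(h)$ (and is suitably scale-covariant); the proposition is the instance $\mathcal{R}=-2\,\mathrm{Ric}$.

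First I would set up the flow in the homogeneous category. Let $G$ be the identity component of $\mathrm{Isom}(X,g)$, a Lie group acting transitively, so $X=G/K$. Restricted to the finite-dimensional open convex cone of $G$-invariant metrics, \eqref{eq:0712133631} becomes a smooth (indeed analytic) ODE, so it has a unique solution $(g_t)_{t\in(-\varepsilon,\varepsilon)}$ with $g_0=g$, depending smoothly on $t$, and every $g_t$ is $G$-invariant, hence homogeneous and complete; this reduction also bypasses the noncompactness of $X$. Next, for $\varphi\in\mathrm{Isom}(X,g)$, naturality $\mathrm{Ric}_{\varphi^{\ast}h}=\varphi^{\ast}\mathrm{Ric}_{h}$ shows that $(\varphi^{\ast}g_t)_t$ solves the same equation with the same initial datum $\varphi^{\ast}g=g$, so $\varphi^{\ast}g_t=g_t$ by uniqueness; thus $\mathrm{Isom}(X,g)\subseteq\mathrm{Isom}(X,g_t)$ for all $t$. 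Applying the maximality criterion \eqref{eq:0114133517} with $h=g_t$ then gives $[g]=[g_t]$ for every $t$, that is, $g_t=c(t)\,\varphi_t^{\ast}g$ for some $c(t)>0$ and $\varphi_t\in\mathrm{Diff}(X)$: the Ricci flow out of $g$ is self-similar.

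It remains to deduce the soliton equation. If $g$ is flat it is trivially a steady soliton, so assume otherwise; then $g$ is not Ricci-flat by the Alekseevskii--Kimelfeld theorem, so $s_0:=|\mathrm{Ric}_g|^2_g>0$ is constant on $(X,g)$ by homogeneity and scales by $c^{-2}$ under $g\mapsto cg$. Evaluating at a base point, $|\mathrm{Ric}_{g_t}|^2_{g_t}=c(t)^{-2}s_0$ is a smooth positive function of $t$, so $c$ is smooth with $c(0)=1$. Hence $\tilde g_t:=c(t)^{-1}g_t$ is a smooth curve of $G$-invariant metrics each isometric to $g$, with $\partial_t\tilde g_t|_{t=0}=-2\mathrm{Ric}_g-c'(0)g$. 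Writing this velocity as a Lie derivative $\mathcal{L}_Y g$ yields $\mathrm{Ric}_g+\tfrac12\mathcal{L}_{Y}g=-\tfrac{c'(0)}{2}\,g$, so $g$ is a Ricci soliton.

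The step I expect to be the main obstacle is exactly this last one: showing that the smooth curve $\tilde g_t$ of $G$-invariant metrics, all isometric to the fixed metric $g$, has velocity tangent at $t=0$ to the diffeomorphism orbit of $g$. For a general manifold no soft argument gives this; the point is that homogeneity turns it into a finite-dimensional statement — the set of $G$-invariant metrics isometric to $g$ is, near $g$, a smooth submanifold of the cone of $G$-invariant metrics (an orbit of the relevant finite-dimensional symmetry group), so any smooth curve lying in it, in particular the Ricci flow curve, is differentiable along it. This is precisely what is encoded in Lauret's bracket-flow reformulation of the homogeneous Ricci flow, which I would invoke rather than reprove; the remaining ingredients (short-time existence and uniqueness on the noncompact $X$, and the choice of a scale-detecting curvature invariant) are routine given the ODE reduction and the Alekseevskii--Kimelfeld theorem.
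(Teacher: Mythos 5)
Your core mechanism is exactly the paper's: show that the Ricci flow starting at $g$ preserves $\mathrm{Isom}(X,g)$, then apply the maximality criterion \eqref{eq:0114133517} to force $[g_t]=[g_0]$ for all $t$, i.e.\ self-similarity. The paper packages the second step as Proposition~\ref{prop:0114134725} and, for Proposition~\ref{prop:0526134932} specifically, obtains existence and isometry-preservation for the honest PDE solution by citing Shi's bounded-curvature existence theorem and Chen--Zhu's uniqueness; you instead reduce to the finite-dimensional ODE on the cone of $G$-invariant metrics and get isometry-preservation from ODE uniqueness (noting that $\varphi^{\ast}g_t$ is again a $G$-invariant solution since $G$ is normal in the full isometry group). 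That is precisely the ``homogeneous solution'' device the paper itself deploys in Proposition~\ref{prop:1102162849} for general curvature flows, so both routes are available and correct. The one substantive divergence is your last two paragraphs: the paper defines a (Ricci) soliton as a metric admitting a self-similar solution in the sense $[g_t]=[g_0]$, so your argument is already complete once self-similarity is established, and the derivation of the classical identity $\mathrm{Ric}_g+\tfrac12\mathcal{L}_Yg=\lambda g$ is not required. That extra step is a genuine and known subtlety for homogeneous metrics --- one must choose the scaling $c(t)$ smoothly (your $|\mathrm{Ric}|^2$ device plus Alekseevskii--Kimelfeld does this) and realize the resulting velocity as a Lie derivative, which is where Lauret's bracket flow and Jablonski's algebraic-soliton results enter --- and you flag the gap honestly rather than paper over it; just be aware it is outside what the proposition, as defined in Section~2.2, actually asks for.
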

For more details, see Subsection~\ref{20220930164756}.

\begin{rem}
  \label{rem:0908144252}
Homogeneous Ricci solitons have been studied actively.
Recently, a proof of the Alekseevskii conjecture
has been announced by Lafuente and B\"{o}hm (\cite{BL2021}).
Also, Jablonski has shown that the Alekseevskii conjecture is equivalent to the generalized Alekseevskii conjecture which asserts that
only Euclidean spaces can admit homogeneous expanding Ricci solitons (\cite{MR3268781}).
Note that a shrinking homogeneous Ricci soliton manifold is the Riemannian product of a compact Einstein manifold and a Euclidean space (\cite{MR2673425, MR2480290}),
and a steady homogeneous Ricci soliton is the Riemannian product of a compact flat manifold and a Euclidean space (\cite{MR3268781, MR2480290}).
These and the generalized Alekseevskii conjecture conclude that a noncompact homogeneous Ricci soliton irreducible Riemannian manifold is diffeomorphic to a Euclidean space.
Therefore, essential homogeneous maximal metrics on noncompact manifolds can only exist on Euclidean spaces.

For the compact case, one can show that maximal metrics on compact manifolds must be isotropy irreducible.
We will give a proof in forthcoming paper.
\end{rem}

Another important property of maximal metrics is that they have maximal isometry groups
in the sense that
$\mathrm{Isom}(X,\langle , \rangle) \subset \mathrm{Isom}(X,\langle , \rangle^{\prime})$ implies
$\mathrm{Isom}(X,\langle , \rangle) = \mathrm{Isom}(X,\langle , \rangle^{\prime})$ for all Riemannian metric $\langle , \rangle^{\prime}$ on $X$.
For a maximal metric $g \in \mathfrak{M}(X)$, if the number of connected components of $\mathrm{Isom}(X,g)$ is finite
then $g$ has a maximal isometry group.
In particular,
\begin{prop}
  \label{prop:0526135154}
A homogeneous maximal metric $g$ has a maximal isometry group.
\end{prop}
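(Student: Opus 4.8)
The plan is to deduce this from the criterion stated just above --- that a maximal metric whose isometry group has finitely many connected components automatically has a maximal isometry group --- by checking that a homogeneous Riemannian manifold $(X,g)$ has an isometry group $\mathrm{Isom}(X,g)$ with only finitely many connected components. Once that is established, the proposition follows immediately, since a homogeneous maximal metric is in particular a maximal metric.

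Write $G=\mathrm{Isom}(X,g)$; by the Myers--Steenrod theorem $G$ is a Lie group acting properly on $X$, and by hypothesis it acts transitively. Let $G_{0}$ be the identity component. First I would show that $G_{0}$ itself acts transitively: for $x\in X$ the orbit $G_{0}\cdot x$ is a closed embedded submanifold (the $G$-action is proper, hence so is the restricted $G_{0}$-action), while $\dim G_{0}=\dim G$ together with $G\cdot x=X$ forces $\dim(G_{0}\cdot x)=\dim X$, so $G_{0}\cdot x$ is also open; as $X$ is connected, $G_{0}\cdot x=X$.

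Next, fix $x\in X$ and let $K=G_{x}$ be the isotropy subgroup, which is compact because the action is proper. Given $h\in G$, transitivity of $G_{0}$ yields $g_{0}\in G_{0}$ with $g_{0}\cdot x=h\cdot x$, so $g_{0}^{-1}h\in K$ and $h\in G_{0}K$; thus $G=G_{0}K$. Hence $G/G_{0}=G_{0}K/G_{0}\cong K/(K\cap G_{0})$, which is a continuous image of the compact group $K$, so compact, and at the same time discrete because $G_{0}$ is open in $G$. A compact discrete group is finite, so $G/G_{0}$ is finite. Applying the quoted criterion to the homogeneous maximal metric $g$ completes the proof.

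The routine ingredients are the properness and closedness of orbits of isometric actions and the standard isomorphism $G_{0}K/G_{0}\cong K/(K\cap G_{0})$. The point that needs care --- and where a careless argument would break down --- is that ``homogeneous'' means transitivity of the \emph{full} isometry group, so one genuinely needs the dimension-and-connectedness step to promote this to transitivity of $G_{0}$ before compactness of the isotropy group can be exploited; without that step one cannot conclude that $G/G_{0}$ is finite.
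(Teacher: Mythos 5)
Your proof is correct and follows the paper's own route: the paper obtains this proposition as the ``in particular'' clause of Proposition~\ref{prop:1226212351} (a maximal metric whose isometry group has finitely many connected components has a maximal isometry group), leaving the finiteness of the component group of $\mathrm{Isom}(X,g)$ for homogeneous $(X,g)$ implicit. You supply exactly that standard verification --- $G_{0}$ acts transitively, hence $G=G_{0}K$ with $K$ a compact isotropy group, so $G/G_{0}$ is compact and discrete, hence finite --- which correctly fills in the one detail the paper omits.
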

For more details, see Subsection~\ref{20220922174711}.

A similar notion of maximal metric has been introduced by Jablonski and Gordon for left-invariant metrics, which is called maximal symmetry (\cite{MR3909903}).
The relationship between maximal metrics and maximal symmetry metrics will be discussed in Subsection~\ref{20220922174639} and Subsection~\ref{20220922174711}, and is summarized in
Figure~\ref{figure:0712142357} and Figure~\ref{figure:0712142537}.

A goal of this paper is to construct various examples of 
maximal metrics on Euclidean spaces which are not isotropy irreducible.
Our strategy to construct examples is to study a moduli space of left-invariant metrics on a Lie group $G$, which is given as
the orbit space of the action of $\R_{>0} \mathrm{Aut}(\mathfrak{g})$ on the set of all inner products $\mathfrak{m}(\mathfrak{g})$ on $\mathfrak{g} = \mathrm{Lie}(G)$.
As a preparation, in Section~3, we study some general theory of isolated orbits for an isometric action on an Hadamard space.
In Section~4, we show that
\begin{theo}
  \label{theo:1210142830}
  Let $G$ be a simply connected Lie group, and
  $\langle , \rangle$ be a left-invariant metric on $G$.
  If the orbit $\R_{>0}\mathrm{Aut}(\mathfrak{g}).\langle , \rangle \subset \mathfrak{m}(\mathfrak{g})$ is an 
  isolated orbit,
  then the left-invariant metric $\langle, \rangle$ is maximal.
  The converse holds if $G$ is unimodular completely solvable.
\end{theo}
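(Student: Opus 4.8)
The plan is to recast the conditions about isometry groups as algebraic conditions on the action of $\R_{>0}\mathrm{Aut}(\mathfrak{g})$ on the Hadamard space $\mathfrak{m}(\mathfrak{g})$ and then feed them into the isolated-orbit theory of Section~3, which I would use in the form: an orbit is isolated if and only if the fixed-point set of an isotropy subgroup is a single orbit of its normalizer. For the forward implication, suppose $\mathcal{O}:=\R_{>0}\mathrm{Aut}(\mathfrak{g}).\langle , \rangle$ is isolated and $\mathrm{Isom}(G,\langle , \rangle)\subset\mathrm{Isom}(G,h)$ for some $h\in\mathfrak{M}(G)$. The first step is that $\mathrm{Isom}(G,\langle , \rangle)$ contains all left translations, hence so does $\mathrm{Isom}(G,h)$, so $h$ is left-invariant and may be regarded as a point of $\mathfrak{m}(\mathfrak{g})$. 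The second step reduces the hypothesis to a containment of isotropy subgroups: since $G$ is simply connected, every $\varphi\in\mathrm{Aut}(\mathfrak{g})\cap\mathrm{O}(\mathfrak{g},\langle , \rangle)$ integrates to a Lie-group automorphism $\Phi$ of $G$, which---by a short computation using that automorphisms intertwine left translations---is an isometry of $(G,\langle , \rangle)$ fixing $e$; hence $\Phi\in\mathrm{Isom}(G,h)$, and fixing $e$ forces its differential to preserve $h$. Thus $\mathrm{Aut}(\mathfrak{g})\cap\mathrm{O}(\mathfrak{g},\langle , \rangle)\subset\mathrm{Aut}(\mathfrak{g})\cap\mathrm{O}(\mathfrak{g},h)$, and since the left-hand side is exactly the isotropy subgroup of $\langle , \rangle$ for the $\R_{>0}\mathrm{Aut}(\mathfrak{g})$-action, this says $h$ lies in the fixed-point set of that isotropy subgroup---a closed, convex subset of the Hadamard space $\mathfrak{m}(\mathfrak{g})$. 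By the isolated-orbit criterion this fixed-point set is $\mathcal{O}$ itself, so $h\in\mathcal{O}$, whence $[\langle , \rangle]=[h]$ and $\langle , \rangle$ is maximal.

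For the converse I would argue by contraposition, assuming now that $G$ is unimodular completely solvable. If $\mathcal{O}$ is not isolated, Section~3 provides $h\in\mathfrak{m}(\mathfrak{g})\setminus\mathcal{O}$ whose $\R_{>0}\mathrm{Aut}(\mathfrak{g})$-isotropy contains that of $\langle , \rangle$, so that $\mathrm{Aut}(\mathfrak{g})\cap\mathrm{O}(\mathfrak{g},\langle , \rangle)\subset\mathrm{Aut}(\mathfrak{g})\cap\mathrm{O}(\mathfrak{g},h)$. Here the remaining hypotheses enter: for $G$ unimodular completely solvable the Gordon--Wilson structure theory of isometry groups of solvmanifolds identifies $\mathrm{Isom}(G,\langle , \rangle)_{e}$ with $\mathrm{Aut}(\mathfrak{g})\cap\mathrm{O}(\mathfrak{g},\langle , \rangle)$---an identification that genuinely fails without unimodularity, e.g.\ for the completely solvable but non-unimodular Iwasawa group of real hyperbolic space---and likewise for $h$; together with $L(G)\subset\mathrm{Isom}(G,h)$ and the transitivity of $L(G)$ this gives $\mathrm{Isom}(G,\langle , \rangle)\subset\mathrm{Isom}(G,h)$. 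Finally, the isometric rigidity of completely solvable solvmanifolds shows that $[\langle , \rangle]=[h]$ would force $h\in\R_{>0}\mathrm{Aut}(\mathfrak{g}).\langle , \rangle=\mathcal{O}$; since $h\notin\mathcal{O}$ we get $[\langle , \rangle]\ne[h]$, so $\langle , \rangle$ is not maximal, as required.

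The crux is the Section~3 input: one must prove that, for an isometric action on a Hadamard space, an orbit is isolated exactly when the fixed-point set of an isotropy subgroup is swept out by the action of its normalizer (equivalently, no point outside the orbit has isotropy containing that of a point of the orbit). I expect this to rest on the convexity of fixed-point sets together with a careful analysis of a slice, or of the distance function to the orbit. The other delicate point, specific to the converse, is the precise form of the isometry-group structure theorem for completely solvable solvmanifolds and the check that it is precisely unimodularity which guarantees that the isotropy of a left-invariant metric consists only of orthogonal automorphisms; removing this hypothesis would require genuinely more.
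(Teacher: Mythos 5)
Your proposal follows essentially the same route as the paper: reduce to left-invariant metrics, translate the isometry-group containment into a containment of isotropy subgroups of the $\R_{>0}\mathrm{Aut}(\mathfrak{g})$-action on the Hadamard manifold $\mathfrak{m}(\mathfrak{g})$, invoke the fact that an isolated orbit of a proper isometric action there has maximal orbit type (equivalently, the fixed-point set of its isotropy subgroup is contained in the orbit), and, for the converse, combine the Gordon--Wilson structure theorem for unimodular completely solvable groups with Alekseevskii's rigidity to upgrade an isometry to an automorphism. The one step you leave as a sketch---the Hadamard-space criterion itself---is proved in the paper exactly along the lines you anticipate: the set of upper bounds of the orbit is path-connected because the unique geodesic to a point fixed by the isotropy group is fixed pointwise (your convexity of fixed-point sets), and it is then shown to collapse to the single orbit by a clopen argument combining isolatedness with the slice theorem and the antisymmetry of the orbit-type order for proper actions.
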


In Section~5,
we construct examples of maximal metrics on Euclidean spaces which are not isotropy irreducible.
By applying Theorem~\ref{theo:1210142830}, one has
\begin{theo}
  \label{theo:1012173304}
  For $w  = (w_{2}, w_{3}, \ldots , w_{n}) \in \R^{n-1}$, define a metric $g_{w}$ on 
  $\R^{n}$ with the Cartesian coordinate system $(x_{1}, x_{2}, \ldots , x_{n})$ by  
\begin{equation*}
        g_{w} := (dx_{1})^{2} + e^{-2w_{2}x_{1}} (dx_{2})^{2} + \cdots  + e^{-2w_{n}x_{1}} (dx_{n})^{2}.
      \end{equation*}
      Then $g_{w}$ is a maximal metric for all $w \in \R^{n-1}$.
      If $w_{i} \neq w_{j}$ for some $i,j \in \{2,3,\ldots , n\}$, then $g_{w}$ is not isotropy irreducible.
    \end{theo}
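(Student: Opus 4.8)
The plan is to realize $(\R^{n},g_{w})$ as a Lie group with a left-invariant metric and apply Theorem~\ref{theo:1210142830}. Let $G_{w}$ be the simply connected completely solvable Lie group $\R\ltimes\R^{n-1}$ whose Lie algebra $\mathfrak{g}_{w}$ has a basis $e_{1},\dots,e_{n}$ with $[e_{1},e_{j}]=w_{j}e_{j}$ for $j\in\{2,\dots,n\}$ and all other brackets zero; it is diffeomorphic to $\R^{n}$, and in the Cartesian coordinates $(x_{1},\dots,x_{n})$ the left-invariant frame extending $e_{1},\dots,e_{n}$ is $\partial_{x_{1}},e^{w_{2}x_{1}}\partial_{x_{2}},\dots,e^{w_{n}x_{1}}\partial_{x_{n}}$, so the left-invariant metric $\langle\,,\rangle$ making this frame orthonormal is exactly $g_{w}$. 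By the forward implication of Theorem~\ref{theo:1210142830} it therefore suffices to prove that the orbit $\R_{>0}\mathrm{Aut}(\mathfrak{g}_{w}).\langle\,,\rangle\subset\mathfrak{m}(\mathfrak{g}_{w})$ is isolated.

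The first step is to describe $\mathrm{Aut}(\mathfrak{g}_{w})$. If $w=0$ then $\mathfrak{g}_{w}$ is abelian, $\R_{>0}\mathrm{Aut}(\mathfrak{g}_{w})=GL(n,\R)$ acts transitively on $\mathfrak{m}(\mathfrak{g}_{w})$, and the orbit is trivially isolated; so assume $w\ne0$. Then $\mathfrak{a}:=\mathrm{span}(e_{2},\dots,e_{n})$ is the nilradical of $\mathfrak{g}_{w}$, hence a characteristic ideal, so every $\Phi\in\mathrm{Aut}(\mathfrak{g}_{w})$ preserves $\mathfrak{a}$, is of the form $\Phi e_{1}=c\,e_{1}+u$ with $c\ne0$ and $u\in\mathfrak{a}$, and on $\mathfrak{a}$ is a matrix $(A_{kj})$; expanding $\Phi[e_{1},e_{j}]=[\Phi e_{1},\Phi e_{j}]$ gives the relations $A_{kj}(w_{j}-c\,w_{k})=0$. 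A short bookkeeping argument, using that $(A_{kj})$ is invertible, forces $c=\pm1$: with $\mathfrak{a}_{\alpha}:=\mathrm{span}\{e_{j}:w_{j}=\alpha\}$ denoting the eigenspaces of $\mathrm{ad}(e_{1})|_{\mathfrak{a}}$, the case $c=1$ says $\Phi|_{\mathfrak{a}}$ is block-diagonal for the decomposition $\mathfrak{a}=\bigoplus_{\alpha}\mathfrak{a}_{\alpha}$, and $c=-1$ occurs (and then $\Phi|_{\mathfrak{a}}$ carries $\mathfrak{a}_{\alpha}$ onto $\mathfrak{a}_{-\alpha}$) precisely when the multiset $\{w_{j}\}$ is invariant under negation; in every case $u$ is unconstrained. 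In particular, denoting $p:=\langle\,,\rangle$ and $H:=\R_{>0}\mathrm{Aut}(\mathfrak{g}_{w})$, the stabilizer $H_{p}$ consists of the orthogonal automorphisms (the homothety factor being forced trivial by the normalization $|e_{1}|=1$), and it is the compact group $\prod_{\alpha}O(\mathfrak{a}_{\alpha})$, enlarged by one involutive component in the negation-invariant case.

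Next, by the theory of isolated orbits for isometric actions on Hadamard spaces developed in Section~3, applied to the action of $H$ on $\mathfrak{m}(\mathfrak{g}_{w})=GL(n,\R)/O(n)$, it suffices to verify that $H.p$ is closed and that its ``normal directions'' carry no extra symmetry --- concretely, that the fixed-point set $\mathfrak{m}(\mathfrak{g}_{w})^{H_{p}}$ (a totally geodesic submanifold, being fixed by an isometric compact group) is exhausted by the orbit $N_{H}(H_{p}).p$ of the normalizer. In the matrix model, $H.p$ is the set of positive matrices whose Schur complement at the $(e_{1},e_{1})$-entry is block-diagonal for the $w$-eigenspace decomposition, which is closed. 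An inner product fixed by $H_{p}\supset\prod_{\alpha}O(\mathfrak{a}_{\alpha})$ must make $\R e_{1}$ and the $\mathfrak{a}_{\alpha}$ mutually orthogonal and be a scalar on each $\mathfrak{a}_{\alpha}$ (with the scalars on $\mathfrak{a}_{\alpha}$ and $\mathfrak{a}_{-\alpha}$ coinciding in the negation-invariant case); conversely the automorphisms with $\Phi e_{1}=e_{1}$ and $\Phi|_{\mathfrak{a}_{\alpha}}$ scalar, together with the overall homothety, lie in $N_{H}(H_{p})$ and already move $p$ onto exactly this family. Hence $\mathfrak{m}(\mathfrak{g}_{w})^{H_{p}}=N_{H}(H_{p}).p$, the orbit is isolated, and $g_{w}$ is maximal. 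I expect this verification to be the main obstacle: one must run the case analysis dictated by the coincidence pattern (and the possible negation symmetry) of the $w_{j}$, identify $H_{p}$, $N_{H}(H_{p})$ and $\mathfrak{m}(\mathfrak{g}_{w})^{H_{p}}$ in each case, and match this against the precise criterion of Section~3; note that the non-unimodular cases $\sum_{j}w_{j}\ne0$ cost nothing extra, since the implication of Theorem~\ref{theo:1210142830} we use does not assume unimodularity.

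For the final assertion I would compute the Ricci operator of $g_{w}$, which in the orthonormal frame $e_{1},\dots,e_{n}$ is diagonal with eigenvalue $-\sum_{k=2}^{n}w_{k}^{2}$ on $e_{1}$ and eigenvalue $-w_{j}\sum_{k=2}^{n}w_{k}$ on $e_{j}$ for $j\ge2$. If $w_{i}\ne w_{j}$ for some $i,j\in\{2,\dots,n\}$, then $\mathrm{Ric}_{g_{w}}$ is not a multiple of the identity --- if $\sum_{k}w_{k}\ne0$ then two of its eigenvalues on $\mathfrak{a}$ differ, while if $\sum_{k}w_{k}=0$ then it vanishes on $\mathfrak{a}$ but not on $\R e_{1}$ --- so $g_{w}$ is not Einstein. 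Since a homogeneous isotropy-irreducible Riemannian manifold is necessarily Einstein (its Ricci tensor is an $\mathrm{Isom}$-invariant symmetric $2$-tensor, hence proportional to the metric by Schur's lemma), $g_{w}$ is not isotropy irreducible.
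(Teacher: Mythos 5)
Your proposal is correct and shares the paper's skeleton: realize $g_{w}$ as the left-invariant metric with orthonormal frame $v_{1}=\partial_{x_{1}},\ v_{j}=e^{w_{j}x_{1}}\partial_{x_{j}}$ on the almost abelian group $\R\ltimes\R^{n-1}$ with $[v_{1},v_{j}]=w_{j}v_{j}$, reduce maximality to isolatedness of the $\R_{>0}\mathrm{Aut}(\mathfrak{s}_{w})$-orbit via Theorem~\ref{theo:1210142830}, and rule out isotropy irreducibility by showing $\mathrm{Ric}(g_{w})$ is not Einstein (your eigenvalues agree with (\ref{eq:0721145841})). Where you diverge is in how isolatedness is verified. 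The paper never computes $\mathrm{Aut}(\mathfrak{s}_{w})$: it uses the linear-algebraic criterion of Proposition~\ref{prop:0803132739}, checking that $\mathcal{B}_{w}$ is a $2$-reversible basis (via the sign-flip automorphisms $\mathrm{diag}(1,\ldots,-1,\ldots,1)$, which forces any invariant normal form $\theta$ to be diagonal) and then killing the diagonal entries with the derivations $E_{22},\ldots,E_{nn}$ and $E_{11}=\mathrm{id}-\sum_{j\ge 2}E_{jj}\in\R\oplus\mathrm{Der}(\mathfrak{s}_{w})$. You instead determine $\mathrm{Aut}(\mathfrak{s}_{w})$ completely, identify the stabilizer $H_{p}$, and show the fixed-point set $\mathfrak{m}(\mathfrak{s}_{w})^{H_{p}}$ is swept out by the normalizer. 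This works, but costs you the case analysis over the coincidence pattern and negation symmetry of the $w_{j}$, all of which the paper's lighter criterion sidesteps; also note that your fixed-point-set condition is not literally the statement of Section~3 --- you should route it through Proposition~\ref{prop:0728212900} by observing that a nonzero $H_{p}$-fixed normal vector exponentiates to a geodesic fixed pointwise by $H_{p}$, hence lying in $\mathfrak{m}(\mathfrak{s}_{w})^{H_{p}}\subset H.p$, contradicting normality (for this only the inclusion $\mathfrak{m}(\mathfrak{s}_{w})^{H_{p}}\subseteq H.p$ is needed, and closedness of the orbit plays no role). Both arguments ultimately rest on the same slice-representation criterion; the paper's buys brevity and uniformity in $w$, yours buys an explicit description of $\mathrm{Aut}(\mathfrak{s}_{w})$ and of the isometry class of the orbit.
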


    The Riemannian metric $g_{w}$ is isometric to a left-invariant metric on a certain solvable Lie group.
    For more details, see Subsection~\ref{20220802165433}.
    Note that the symmetric group $\mathfrak{S}_{n-1}$ acts on $\R^{n-1}$ naturally.
      For $w, w^{\prime} \in \R^{n-1}$,
      $g_{w}$ and $g_{w^{\prime}}$ are isometric with each other if and only if
      there exists some permutation $\sigma \in \mathfrak{S}_{n-1}$ such that
      $\sigma.w = w^{\prime}$.
    Hence Theorem~\ref{theo:1012173304} gives continuously many examples of maximal metrics.
    
The other examples are  constructed by considering nilmanifolds attached with graphs.
By applying Theorem~\ref{theo:1210142830}, we show that

\begin{theo}
  \label{theo:1207153743}
  If given an edge-transitive graph $\mathcal{G}$ with $p$ vertices and $q$ edges,
  one can construct maximal metrics on $\R^{p+q}$.
  If $q \neq 0$ then the metrics are not isotropy irreducible.  
\end{theo}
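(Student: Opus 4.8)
The plan is to realize the required metrics on $\R^{p+q}$ as left-invariant metrics on two-step nilpotent Lie groups attached to $\mathcal{G}$ and then to invoke Theorem~\ref{theo:1210142830}.

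\emph{The construction.} Let $\mathcal{G}$ have vertices $v_{1},\dots,v_{p}$ and edges $e_{1},\dots,e_{q}$, and fix an orientation of each edge. Attach to $\mathcal{G}$ the two-step nilpotent Lie algebra $\mathfrak{n}_{\mathcal{G}} = V\oplus W$ of Dani and Mainkar, where $V = \mathrm{span}(X_{1},\dots,X_{p})$, $W = \mathrm{span}(Z_{1},\dots,Z_{q})$, and the only nonzero brackets of basis vectors are $[X_{i},X_{j}] = Z_{k}$ when $e_{k}$ joins $v_{i}$ to $v_{j}$. Then $W = [\mathfrak{n}_{\mathcal{G}},\mathfrak{n}_{\mathcal{G}}]$ is contained in the center, and the simply connected Lie group $N_{\mathcal{G}}$ with Lie algebra $\mathfrak{n}_{\mathcal{G}}$ is diffeomorphic to $\R^{p+q}$. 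On $N_{\mathcal{G}}$ I would take the left-invariant metric $\langle , \rangle$ whose value at the identity makes $V\perp W$, makes $X_{1},\dots,X_{p}$ orthonormal, and makes $Z_{1},\dots,Z_{q}$ orthogonal of common length $c>0$; the constant $c$ --- and, if $\mathcal{G}$ is edge-transitive but not vertex-transitive, the common length of the $X_{i}$ within each $\mathrm{Aut}(\mathcal{G})$-orbit of vertices --- will be normalized below.

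\emph{Reduction via Theorem~\ref{theo:1210142830}.} Since $N_{\mathcal{G}}$ is simply connected, it suffices to prove that the orbit $\R_{>0}\mathrm{Aut}(\mathfrak{n}_{\mathcal{G}}).\langle , \rangle$ is an isolated orbit for the action of $\R_{>0}\mathrm{Aut}(\mathfrak{n}_{\mathcal{G}})$ on the Hadamard space $\mathfrak{m}(\mathfrak{n}_{\mathcal{G}})$. Granting this, $\langle , \rangle$ is maximal by Theorem~\ref{theo:1210142830}, and transporting it through the diffeomorphism $N_{\mathcal{G}}\cong\R^{p+q}$ produces a maximal metric $g_{\mathcal{G}}$ on $\R^{p+q}$; varying the normalization yields several such metrics.

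\emph{Isolation of the orbit.} This is the core of the argument and where I expect the real work to lie. I would apply the criterion for isolated orbits established in Section~3; for this proper isometric action I expect it to reduce the claim to showing that every endomorphism of $\mathfrak{n}_{\mathcal{G}}$ fixed by the isotropy group $K$ of $\langle , \rangle$ in $\R_{>0}\mathrm{Aut}(\mathfrak{n}_{\mathcal{G}})$ is tangent to the orbit. Identify $T_{\langle , \rangle}\mathfrak{m}(\mathfrak{n}_{\mathcal{G}})$ with the space $\mathrm{Sym}(\mathfrak{n}_{\mathcal{G}})$ of $\langle , \rangle$-self-adjoint endomorphisms; then the orbit tangent space is $\R\,\mathrm{id} + \{\, D + D^{*} : D\in\mathrm{Der}(\mathfrak{n}_{\mathcal{G}}) \,\}$. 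Since every $D\in\mathrm{Hom}(V,W)$ is a derivation (it maps into the center and vanishes on $W = [\mathfrak{n}_{\mathcal{G}},\mathfrak{n}_{\mathcal{G}}]$), the orbit tangent space already contains the entire off-diagonal part of $\mathrm{Sym}(\mathfrak{n}_{\mathcal{G}})$, so it remains only to handle $K$-fixed elements of $\mathrm{Sym}(V)\oplus\mathrm{Sym}(W)$. Now $K$ contains, on the one hand, for every sign vector $\varepsilon\in\{\pm 1\}^{p}$ the automorphism $X_{i}\mapsto\varepsilon_{i}X_{i}$, $Z_{k}\mapsto\varepsilon_{i}\varepsilon_{j}Z_{k}$ (for $e_{k} = \{v_{i},v_{j}\}$), and, on the other hand, the automorphisms induced by $\mathrm{Aut}(\mathcal{G})$. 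A direct check shows that invariance under the sign automorphisms forces a $K$-fixed element of $\mathrm{Sym}(V)\oplus\mathrm{Sym}(W)$ to be diagonal in the basis $(X_{i})\cup(Z_{k})$; edge-transitivity then forces its $Z_{k}$-entries to coincide, and $\mathrm{Aut}(\mathcal{G})$-invariance forces its $X_{i}$-entries to be constant on each vertex orbit. Finally, every diagonal endomorphism of this shape is tangent to the orbit: the diagonal derivations $\mathrm{diag}(a_{1},\dots,a_{p},d_{1},\dots,d_{q})$ --- those with $d_{k} = a_{i}+a_{j}$ whenever $e_{k} = \{v_{i},v_{j}\}$, whose $d_{k}$ are automatically constant once $(a_{i})$ is $\mathrm{Aut}(\mathcal{G})$-invariant, by edge-transitivity --- together with $\R\,\mathrm{id}$ span the full space of diagonal $K$-fixed endomorphisms. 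Hence $K$ fixes nothing in the normal space, and the orbit is isolated. The main obstacle is this chain of computations together with the correct choice of the normalization, and, above all, confirming that the Section~3 criterion indeed takes the form above; for concrete highly symmetric graphs the verification is transparent.

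\emph{Non-isotropy-irreducibility when $q\neq 0$.} Assume $q\geq 1$; then $V$ and $W$ are both nonzero, and $W = [\mathfrak{n}_{\mathcal{G}},\mathfrak{n}_{\mathcal{G}}]$ is a proper nonzero subspace invariant under every automorphism of $\mathfrak{n}_{\mathcal{G}}$. Since $\mathfrak{n}_{\mathcal{G}}$ is non-abelian, $\langle , \rangle$ is not flat, so the isometry group of $(N_{\mathcal{G}},\langle , \rangle)$ is the semidirect product of $N_{\mathcal{G}}$ with $\mathrm{Aut}(\mathfrak{n}_{\mathcal{G}})\cap\mathrm{O}(\langle , \rangle)$; in particular the isotropy group at the identity is contained in $\mathrm{Aut}(\mathfrak{n}_{\mathcal{G}})$ and therefore preserves both $W$ and its $\langle , \rangle$-orthogonal complement $V$. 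Hence the isotropy representation on $\mathfrak{n}_{\mathcal{G}} = T_{e}N_{\mathcal{G}}$ is reducible, so $g_{\mathcal{G}}$ is not isotropy irreducible. This would complete the proof.
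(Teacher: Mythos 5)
Your proposal is correct and follows essentially the same route as the paper: the Dani--Mainkar algebra $\mathfrak{n}_{\mathcal{G}_{d}}$ with a diagonal left-invariant metric, reduction to isolation of the $\R_{>0}\mathrm{Aut}(\mathfrak{n}_{\mathcal{G}_{d}})$-orbit via Theorem~\ref{theo:1210142830} and the no-fixed-normal-vector criterion, the sign automorphisms to force diagonality, edge-transitivity together with the diagonal derivations to handle the diagonal part, and Wilson's description of the isometry group plus the invariance of $[\mathfrak{n}_{\mathcal{G}_{d}},\mathfrak{n}_{\mathcal{G}_{d}}]$ for the failure of isotropy irreducibility. The only (harmless) repackaging is that you show every isotropy-fixed symmetric tensor is tangent to the orbit by a spanning/dimension count, whereas the paper dually shows every fixed normal vector vanishes via the identities $\theta(v,v)=-\sum_{e\ni v}\theta(e,e)$ and $\sum_{e}\theta(e,e)=0$; these use the same derivations and the same edge-transitivity step, and the criterion you were unsure about is exactly Proposition~\ref{prop:0728212900}.
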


A precise assertion of Theorem~\ref{theo:1207153743} will be given in Theorem~\ref{theo:0810135716}.
Note that, for graphs $\mathcal{G}$ and $\mathcal{G}^{\prime}$, 
the corresponding metrics
are isometric with each other if and only if $\mathcal{G}$ and $\mathcal{G}^{\prime}$ are isomorphic as graphs.
Hence, Theorem~\ref{theo:1207153743} also  gives infinitely many nontrivial examples of maximal metrics.
For more details, see Subsection~\ref{20220802165049}.

\section{Maximal metrics}

In this section, we give some general theory on maximal metrics.
Recall that, in Section~1,
we introduce the preordered set $(\mathfrak{M}(X)/_{\! \sim}, \prec)$.
Here, $\mathfrak{M}(X)/_{\! \sim}$ is the moduli space of Riemannian metrics on $X$ up to isometry and scaling,
and $\prec$ is the preorder with respect to the size of the isometry groups.
A maximal metric $g \in \mathfrak{M}(X)$ is the one whose equivalent class $[g] \in \mathfrak{M}(X)/_{\! \sim}$ is a maximal element
with respect to $\prec$.

\subsection{isotropy irreducible metrics and maximal metrics}
\label{20220930164546}

Firstly, we explain that isotropy irreducible metrics are maximal metrics.
Recall that, for $p \in X$ in a Riemannian manifold $(X,g)$, the action of the stabilizer
$\mathrm{Isom}(X,g)_{p}:= \{\varphi \in \mathrm{Isom}(X,g) \mid \varphi(p) = p\}$
on $T_{p}X$ by differential is called the \textit{isotropy representation} of $(X,g)$ at $p$.

\begin{defi}
  A Riemannian metric $g \in \mathfrak{M}(X)$ is called an \textit{isotropy irreducible metric}
  if the isotropy representation at each point $p \in X$ is an irreducible representation.
  A Riemannian manifold $(X,g)$ with an isotropy irreducible metric $g$ is called an \textit{isotropy irreducible space}.
\end{defi}

One can see that a complete connected isotropy irreducible space is homogeneous.
For examples, see \cite{MR1097024}.
Strongly isotropy irreducible spaces which are some special class of isotropy irreducible spaces have been classified
independently by Manturov (\cite{MR0139113, MR0139112, MR0210031}), Wolf (\cite{MR223501, MR736216}) and Kr\"{a}mer (\cite{MR376965}).
Conclusively, isotropy irreducible spaces have been classified by Wang-Ziller (\cite{MR1097024}).

\begin{prop}
  \label{prop:1102152520}
  A complete connected Riemannian manifold $(X,g)$
  is an isotropy irreducible space if and only if $(X,g)$ satisfies the following:
  \begin{equation}
    \label{eq:0721151425}
      \{h \in \mathfrak{M}(X) \mid \mathrm{Isom}(X,g) \subset \mathrm{Isom}(X,h)\} =
    \R_{>0}g.
  \end{equation}
\end{prop}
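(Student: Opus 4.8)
The plan is to prove the two implications of Proposition~\ref{prop:1102152520} separately, exploiting the fact (cited above) that a complete connected isotropy irreducible space is homogeneous, so that on the relevant side one may work with a transitive isometry group.

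First I would prove the ``only if'' direction. Suppose $(X,g)$ is isotropy irreducible, and let $h \in \mathfrak{M}(X)$ satisfy $\mathrm{Isom}(X,g) \subset \mathrm{Isom}(X,h)$. Since $(X,g)$ is homogeneous, $G := \mathrm{Isom}(X,g)$ acts transitively on $X$; fix $p \in X$ and let $K := G_p$ be the stabilizer. Both $g_p$ and $h_p$ are $K$-invariant inner products on $T_pX$, and by hypothesis the isotropy representation of $K$ on $T_pX$ is irreducible. By Schur's lemma, any two $K$-invariant inner products on an irreducible real representation are proportional, so $h_p = \lambda g_p$ for some $\lambda > 0$. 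Now $h$ is $G$-invariant (because $G \subset \mathrm{Isom}(X,h)$), and $g$ is $G$-invariant as well; since $G$ acts transitively, a $G$-invariant tensor field is determined by its value at the single point $p$ together with equivariance. Hence $h = \lambda g$ on all of $X$, i.e. $h \in \R_{>0}g$. The reverse inclusion $\R_{>0}g \subseteq \{h : \mathrm{Isom}(X,g)\subset\mathrm{Isom}(X,h)\}$ is immediate from $\mathrm{Isom}(X,cg) = \mathrm{Isom}(X,g)$.

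For the ``if'' direction I would argue by contraposition: assume $(X,g)$ is not isotropy irreducible and produce $h \notin \R_{>0}g$ with $\mathrm{Isom}(X,g)\subset\mathrm{Isom}(X,h)$. Here I cannot start by assuming homogeneity, so I work directly with $G := \mathrm{Isom}(X,g)$ and its action on $X$. Pick $p \in X$ at which the isotropy representation of $K := G_p$ on $V := T_pX$ is reducible; decompose $V = V_1 \oplus \cdots \oplus V_k$ into $K$-irreducible (or at least $K$-invariant, mutually $g_p$-orthogonal) summands with $k \ge 2$, and define a new $K$-invariant inner product $\beta$ on $V$ by rescaling, say $\beta = g_p$ on $V_1$ and $\beta = 2g_p$ on $V_2 \oplus \cdots \oplus V_k$, so $\beta$ is not a multiple of $g_p$. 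The task is then to spread $\beta$ out to a $G$-invariant metric $h$ on $X$. If $G$ acts transitively this is routine as above; the real content — and the step I expect to be the main obstacle — is the non-homogeneous case, where $G$ may have many orbits of different types and one must check that the pointwise prescription is consistent across the whole manifold and yields a smooth tensor field. I would handle this by defining $h$ orbit by orbit: on the orbit $G.q$ of a point $q$, transport a chosen $G_q$-invariant inner product on $T_qX$ around by the $G$-action (well-defined because it is $G_q$-invariant), choosing these inner products so that they agree on overlaps and vary smoothly in the transverse directions — e.g. by working in a slice/tube around each orbit and using an equivariant tubular neighborhood, or more cheaply by only modifying $g$ on a single $G$-invariant open set where the orbit structure is controlled (the principal stratum), then extending by $g$ itself and smoothing. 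One must verify two things: that the resulting $h$ is genuinely $G$-invariant, hence $G = \mathrm{Isom}(X,g) \subset \mathrm{Isom}(X,h)$; and that $h$ is not globally proportional to $g$, which is guaranteed since at $p$ we have $h_p = \beta \ne \lambda g_p$ for any $\lambda$. This gives the desired $h$ and completes the contrapositive.

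A cleaner alternative for the ``if'' direction, which I would present if the orbit-bookkeeping above proves cumbersome: first observe that if $(X,g)$ satisfies~\eqref{eq:0721151425} then in particular no nontrivial rescaling of $g$ along $\mathrm{Isom}(X,g)$-invariant distributions is possible, and use this rigidity together with the classification-flavored fact that $(X,g)$ must then be homogeneous (any complete Riemannian manifold all of whose $\mathrm{Isom}$-invariant metric deformations are trivial has a transitive isometry group — this can be seen from the de Rham decomposition or from the structure of the isometry group acting on the space of invariant symmetric $2$-tensors), reducing to the homogeneous setting where Schur's lemma applies directly. Either way, the crux is the passage between the pointwise linear-algebra statement (invariant inner products on $V$ proportional $\iff$ $V$ irreducible) and the global statement about metrics, and in the non-homogeneous direction this passage is where care is needed.
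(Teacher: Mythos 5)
Your ``only if'' direction is exactly the paper's argument: homogeneity of complete connected isotropy irreducible spaces, Schur's lemma applied to the two invariant inner products $g_p$ and $h_p$ at one point, and transitivity to spread the proportionality over all of $X$. No issues there, and the trivial reverse inclusion is also fine.

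The gap is in the ``if'' direction, and it sits exactly where you predicted: the possibly inhomogeneous case. Your primary plan (pick a point $p$ with reducible isotropy, rescale on an invariant summand, then propagate orbit by orbit using slices and equivariant tubular neighbourhoods, ``extending by $g$ itself and smoothing'') is a program rather than a proof: you never verify smoothness of the prescription across orbit-type strata, nor that the smoothing can be done equivariantly without destroying either the $G$-invariance or the non-proportionality of the resulting tensor. Your ``cleaner alternative'' correctly guesses the structure of the paper's proof --- first show that \eqref{eq:0721151425} forces homogeneity, then apply Schur in the homogeneous setting --- but the justification you offer for the homogeneity step (``the de Rham decomposition or \dots the structure of the isometry group acting on the space of invariant symmetric $2$-tensors'') is not an argument; the de Rham decomposition is not relevant here. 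The missing idea is much cheaper than anything you propose: if $G:=\mathrm{Isom}(X,g)$ does not act transitively, then, the action being proper on the complete connected manifold $X$, there exists a nonconstant smooth $G$-invariant function $f$ on $X$, and the conformal deformation $e^{f}g$ is a smooth $G$-invariant metric that does not lie in $\R_{>0}g$; hence \eqref{eq:0721151425} fails. This disposes of the inhomogeneous case in one line, and the remaining homogeneous case is handled by the elementary construction you already have (take $2g_{1}+3g_{2}$ on an invariant orthogonal splitting of $T_{p}X$). With that conformal trick substituted for the orbit-by-orbit construction, your proof closes.
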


\begin{proof}
  \label{proof:1102152459}  
  We prove ``only if part''.
  Assume that $(X,g)$ is isotropy irreducible.
  Recall that an isotropy irreducible space $(X,g)$ is homogeneous.  
  Hence the assertion follows from the Schur's Lemma applied to the isotropy representation.

  We prove ``if part''.
  Firstly we show that  if $(X,g)$ satisfies the property (\ref{eq:0721151425}), then $(X,g)$ must be homogeneous.
  If $(X,g)$ is inhomogeneous,
  since $\mathrm{Isom}(X,g)$-action on the complete connected Riemannian manifold $X$ is proper,
  there exists a non-constant $\mathrm{Isom}(X,g)$-invariant smooth function $f$ on $X$.
  Then one has $e^{f} g $ is a smooth $\mathrm{Isom}(X,g)$-invariant Riemannian metric which
   is not contained in $\R_{>0}g$. This concludes that $(X,g)$ does not satisfies the property (\ref{eq:0721151425}).

  Hence we have only to consider the case $(X,g)$ is homogeneous.
  Take any $p \in X$.
  Then the set of $\mathrm{Isom}(X,g)$-invariant metrics
  on $X$ is naturally identified with
  the set of inner products which are invariant under the isotropy representation on a tangent space $T_{p}X$.
  Assume that $(X,g)$ is not isotropy irreducible.
  Then there exists a subspace $V \subsetneq T_{p}X$
  which is invariant under the isotropy representation.
  Denote by $g_1$ and $g_2$
  the restriction of $g$ to $V$ and the normal space $V^{\perp}$, respectively.
  Then $a g_1 + b g_2$ is an $\mathrm{Isom}(X,g)$-invariant metrics
  for all $a,b >0$.
  This implies that, for examples,
  $2 g_1 +  3g_2$
  is an $\mathrm{Isom}(X,g)$-invariant metrics which is not contained in $\R_{>0}g$.
\end{proof}

Note that a metric $g \in \mathfrak{M}(X)$ is maximal if and only if
\begin{equation*}
 \{h \mid \mathrm{Isom}(X,g) \subset \mathrm{Isom}(X,h)\} \subset [g], 
\end{equation*}
where $[g]$ is the equivalent class with respect to $\sim$.
Since $\R_{>0}g \subset [g]$, one has
\begin{cor}
  \label{cor:1102153754}
A complete isotropy irreducible metric is a maximal metric.
\end{cor}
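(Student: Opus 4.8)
The plan is to deduce the statement directly from Proposition~\ref{prop:1102152520} together with the reformulation of maximality recorded immediately above the corollary. First I would recall that, by the very definition of the equivalence relation $\sim$, each metric $\lambda g$ with $\lambda > 0$ is equivalent to $g$, so that $\R_{>0}g \subset [g]$.

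Next, assuming that $g$ is a complete connected isotropy irreducible metric, I would apply Proposition~\ref{prop:1102152520} to obtain the exact identification
\begin{equation*}
  \{h \in \mathfrak{M}(X) \mid \mathrm{Isom}(X,g) \subset \mathrm{Isom}(X,h)\} = \R_{>0}g.
\end{equation*}
Chaining this with the inclusion $\R_{>0}g \subset [g]$ gives
\begin{equation*}
  \{h \in \mathfrak{M}(X) \mid \mathrm{Isom}(X,g) \subset \mathrm{Isom}(X,h)\} \subset [g],
\end{equation*}
which is precisely the condition characterizing maximal metrics noted just before the corollary. Therefore $g$ is maximal, completing the argument.

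Since all the substantive content has already been extracted in the proof of Proposition~\ref{prop:1102152520} --- the homogeneity of complete connected isotropy irreducible spaces and the application of Schur's lemma to the isotropy representation --- I do not expect any genuine obstacle here; the corollary is essentially a repackaging. The only point deserving a moment's attention is bookkeeping: one should check that the standing hypotheses line up, i.e.\ that $X$ is connected (as assumed throughout the paper) and that ``isotropy irreducible metric'' in the corollary is read together with completeness, so that Proposition~\ref{prop:1102152520} applies verbatim.
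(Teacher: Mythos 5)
Your proposal is correct and follows exactly the paper's own route: apply the ``only if'' direction of Proposition~\ref{prop:1102152520} to identify the set of metrics with larger isometry group with $\R_{>0}g$, then use $\R_{>0}g \subset [g]$ together with the stated reformulation of maximality. Nothing further is needed.
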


\subsection{maximal metrics and self-similar solutions for metric evolution equations}
\label{20220930164756}

Denote by $\mathfrak{S}(X)$ the set of all symmetric $(0,2)$-tensors on $X$.
Let us consider a map $\mathcal{R} : \mathfrak{M}(X) \to \mathfrak{S}(X)$.
Then one can define a partial differential equation
\begin{equation*}
  \frac{\partial }{\partial t}g_{t} = \mathcal{R}(g_{t}).
\end{equation*}
For examples, the equation is called the \textit{Ricci flow} when the case $\mathcal{R} = -2\mathrm{Ric}$.

A solution $\{g_{t}\}_{t \in [0,T)}$ is called \textit{self-similar} if $[g_{t}] = [g_{0}]$ for all $t \in [0,T)$.
Also, if a metric $g \in \mathfrak{M}(X)$ admits some self-similar solution $\{g_{t}\}$ with $g = g_{0}$, then $g$ is called a \textit{soliton}.
For examples, a soliton for the Ricci flow is usually called a \textit{Ricci soliton}.
The study of self-similar solutions and solitons are important in order to understand metric evolution equations.

By the property (\ref{eq:0114133517}), one has
\begin{prop}
  \label{prop:0114134725}
  Let $g \in \mathfrak{M}(X)$ be a maximal metric.
  If a solution $\{g_{t}\}_{t \in [0,T)}$ for the equation
  $\frac{\partial}{\partial t}g_{t} = \mathcal{R}(g_{t})$
  with $g_{0} = g$ preserves the isometry group in the sense that
  $\mathrm{Isom}(X,g_{0}) \subset \mathrm{Isom}(X,g_{t})$ for all $t \in [0,T)$,
  then the solution $\{g_{t}\}_{t \in [0,T)}$ is self-similar.
  That is, the initial metric $g = g_{0}$ is soliton.
\end{prop}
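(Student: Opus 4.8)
The plan is to reduce everything to the reformulation of maximality recorded in~(\ref{eq:0114133517}). First I would recall that, by~(\ref{eq:0114133517}), the hypothesis that $g$ is maximal is equivalent to the implication
\[
  \mathrm{Isom}(X,g) \subset \mathrm{Isom}(X,h) \ \Rightarrow\ [g]=[h] \qquad (h \in \mathfrak{M}(X)).
\]
On the other hand, the standing assumption on the flow is precisely that $\mathrm{Isom}(X,g_{0}) \subset \mathrm{Isom}(X,g_{t})$ for every $t \in [0,T)$, where $g_{0}=g$. So the antecedent of the displayed implication holds with $h = g_{t}$.

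Next I would simply apply that implication to $h = g_{t}$, for each fixed $t \in [0,T)$. This yields $[g_{t}] = [g_{0}]$ for all $t \in [0,T)$, which is exactly the definition of a self-similar solution given above. Consequently $\{g_{t}\}_{t\in[0,T)}$ is self-similar, and since it is a self-similar solution of $\frac{\partial}{\partial t}g_{t} = \mathcal{R}(g_{t})$ with $g_{0}=g$, the definition of a soliton immediately gives that $g$ is a soliton.

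There is essentially no obstacle: the entire content of the argument is the equivalence~(\ref{eq:0114133517}), which was already established right after Definition~\ref{defi:1207154844}, together with unwinding the definitions of \emph{self-similar} and \emph{soliton}. The only point that deserves a word of care is to invoke~(\ref{eq:0114133517}) rather than the defining condition $[g]\prec[h]\Rightarrow[g]=[h]$ directly, since the latter a priori compares $\mathrm{Isom}(X,g)$ only with isometry groups of metrics in the class $[h]$; but passing to~(\ref{eq:0114133517}) is exactly what converts this into a statement about the specific representatives $g_{t}$, so the deduction goes through verbatim.
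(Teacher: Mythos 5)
Your argument is correct and matches the paper's own reasoning: the paper derives this proposition directly from the reformulation of maximality in~(\ref{eq:0114133517}), applied with $h = g_{t}$, exactly as you do. Nothing further is needed.
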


For examples, one knows that the solution $\{g_{t}\} \subset \mathfrak{M}(X)$ of the Ricci flow
(\ref{eq:0712133631})
starting at $g \in \mathfrak{M}(X)$ with bounded curvature exists (\cite{MR1001277}), and preserves the isometry group (\cite{MR2260930}).
This proves Proposition~\ref{prop:0526134932}:
a homogeneous maximal metric is a Ricci soliton.

Let $(X,g)$ be a homogeneous Riemannian manifold.
Denote by $G := \mathrm{Isom}(X,g)$.
Denote by $\mathfrak{M}_{G}(X)$ and $\mathfrak{S}_{G}(X)$ the set of all $G$-invariant metrics and the set of all $G$-invariant symmetric $(0,2)$-tensors,
respectively.
Note that
$\mathfrak{S}_{G}(X)$ is a finite dimensional vector space, and $\mathfrak{M}_{G}(X)$ is an open subset of $\mathfrak{S}_{G}(X)$.
Hence, $\mathfrak{M}_{G}(X)$ and $\mathfrak{S}_{G}(X)$ have a natural differentiable structure.
Note that the group of diffeomorphism $\mathrm{Diff}(X)$ acts on $\mathfrak{S}(X)$ and $\mathfrak{M}(X)$ in the natural way.
Assume that $\mathcal{R} : \mathfrak{M}(X) \to \mathfrak{S}(X)$ is $\mathrm{Diff}(X)$-equivariant.
Since $G \subset \mathrm{Diff}(X)$,  $\mathcal{R}$ induces the map $\mathcal{R} : \mathfrak{M}_{G}(X) \to \mathfrak{S}_{G}(X)$.
The map $\mathcal{R} : \mathfrak{M}_{G}(X) \to \mathfrak{S}_{G}(X)$ can be regarded as a vector field on $\mathfrak{M}_{G}(X)$.
If the map $\mathcal{R} : \mathfrak{M}_{G}(X) \to \mathfrak{S}_{G}(X)$ is continuous, 
then there exists an integral curve $\{g_{t}\}_{t \in [0,T)} \subset \mathfrak{M}_{G}(X)$ with initial metric $g$.
Then $\{g_{t}\}_{t \in [0,T)} \subset \mathfrak{M}_{G}(X)$ is also a solution for the original differential equation $\frac{\partial}{\partial t} g_{t} = \mathcal{R}(g_{t})$.
We call the solution $\{g_{t}\}_{t \in [0,T)} \subset \mathfrak{M}_{G}(X)$ the \textit{homogeneous solution} for the equation.
Homogeneous solutions $\{g_{t}\}_{t \in [0,T)}$ preserve the isometry groups.
Hence, by Proposition~\ref{prop:0114134725}, one has

\begin{prop}
  \label{prop:1102162849}
  Let $g \in \mathfrak{M}(X)$ be a homogeneous maximal metric.
  Denote by $G := \mathrm{Isom}(X,g)$.
  Consider a $\mathrm{Diff}(X)$-equivariant map $\mathcal{R} : \mathfrak{M}(X) \to \mathfrak{S}(X)$.
  Assume that the map $\mathcal{R} : \mathfrak{M}_{G}(X) \to \mathfrak{S}_{G}(X)$ is continuous.
  Then a homogeneous solution $g_{t}$ of the partial differential equation $ \frac{\partial}{\partial t} g_{t} = \mathcal{R}(g_{t})$
  starting at $g$ is self-similar.
  In other words, $g$ is a soliton for the equation.
\end{prop}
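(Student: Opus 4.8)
The plan is to realize the homogeneous solution as an integral curve of a continuous vector field on the finite-dimensional manifold $\mathfrak{M}_{G}(X)$ and then to feed the resulting curve into Proposition~\ref{prop:0114134725}.

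First I would verify that the $\mathrm{Diff}(X)$-equivariant map $\mathcal{R}$ really does send $\mathfrak{M}_{G}(X)$ into $\mathfrak{S}_{G}(X)$: for $g^{\prime} \in \mathfrak{M}_{G}(X)$ and $\gamma \in G$ one has $\gamma^{\ast}\mathcal{R}(g^{\prime}) = \mathcal{R}(\gamma^{\ast}g^{\prime}) = \mathcal{R}(g^{\prime})$, the last equality holding because $g^{\prime}$ is $G$-invariant, so that $\mathcal{R}(g^{\prime}) \in \mathfrak{S}_{G}(X)$. Since $\mathfrak{S}_{G}(X)$ is a finite-dimensional vector space and $\mathfrak{M}_{G}(X)$ is an open subset of it, the hypothesis that $\mathcal{R}\colon \mathfrak{M}_{G}(X)\to\mathfrak{S}_{G}(X)$ is continuous says precisely that $\mathcal{R}$ is a continuous vector field on an open subset of a finite-dimensional vector space.

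Next, I would invoke the Peano existence theorem to obtain some $T>0$ and a curve $\{g_{t}\}_{t\in[0,T)}\subset\mathfrak{M}_{G}(X)$ with $g_{0}=g$ and $\frac{\partial}{\partial t}g_{t}=\mathcal{R}(g_{t})$ in $\mathfrak{S}_{G}(X)$; this is the homogeneous solution. Because the natural inclusion $\mathfrak{M}_{G}(X)\hookrightarrow\mathfrak{M}(X)$ intertwines the two incarnations of $\mathcal{R}$, the curve $\{g_{t}\}$ is also a solution of the original equation on $X$. Each $g_{t}$ is $G$-invariant, hence $\mathrm{Isom}(X,g_{0})=G\subset\mathrm{Isom}(X,g_{t})$ for every $t\in[0,T)$, so the homogeneous solution preserves the isometry group. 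Since $g=g_{0}$ is maximal, Proposition~\ref{prop:0114134725} then gives that $\{g_{t}\}_{t\in[0,T)}$ is self-similar, i.e.\ $g$ is a soliton for $\frac{\partial}{\partial t}g_{t}=\mathcal{R}(g_{t})$.

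The step that I expect to require the most care is the compatibility between the ``finite-dimensional'' time derivative, taken in the vector space $\mathfrak{S}_{G}(X)$, and the pointwise time derivative appearing in the PDE on $X$: one must record that an integral curve in $\mathfrak{M}_{G}(X)$ is genuinely a solution of $\frac{\partial}{\partial t}g_{t}=\mathcal{R}(g_{t})$ on $X$, after which everything else is formal. I note that only continuity of $\mathcal{R}$, not a Lipschitz bound, is needed here, since Peano's theorem already yields a solution and uniqueness plays no role in the conclusion.
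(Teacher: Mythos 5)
Your proposal is correct and follows essentially the same route as the paper: the paper likewise uses the $\mathrm{Diff}(X)$-equivariance to see that $\mathcal{R}$ restricts to a continuous vector field on the open subset $\mathfrak{M}_{G}(X)$ of the finite-dimensional vector space $\mathfrak{S}_{G}(X)$, obtains the homogeneous solution as an integral curve, observes that it preserves the isometry group, and concludes via Proposition~\ref{prop:0114134725}. Your explicit appeal to Peano's theorem and your remark on the compatibility of the two notions of time derivative are just slightly more detailed renderings of the same argument.
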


If $\mathcal{R}$ is defined as the combination of some curvature tensors,
then $\mathcal{R}$ is $\mathrm{Diff}(X)$-equivariant, and
the restriction map $\mathcal{R} : \mathfrak{M}_{G}(X) \to \mathfrak{S}_{G}(X)$ is continuous.
Hence we can apply Proposition~\ref{prop:1102162849} for this case.

For a Riemannian metric $g$,
denote by $\mathrm{Rm}_{g}$ the Riemannian curvature tensor.
Also, $\mathrm{Ric}_{g}$ and $\mathrm{scal}_{g}$ are the Ricci curvature and the scalar curvature of $g$, respectively.
For $a,b,c \in \R$, let us define the map $\mathcal{R} : \mathfrak{M}(X) \to \mathfrak{S}(X)$ by
\begin{equation*}
  \mathcal{R}(g) = -a \mathrm{Ric}_{g} - b \mathrm{scal}_{g}\cdot g - c \mathrm{Rm}^{2}_{g} \quad (g \in \mathfrak{M}(X)).
\end{equation*}
Here, $\mathrm{Rm}^{2}_{g}$ is the $(0,2)$-symmetric tensor given by
\begin{equation*}
  \mathrm{Rm}^{2}_{g}(x,y) := - \mathrm{tr}(\mathrm{Rm}_{g}(x, \star) \circ \mathrm{Rm}_{g}(y,\star)),
\end{equation*}
where ``$\star$'' means the metric contraction with respect to $g$.
Then the equation $\frac{\partial}{\partial t}g_{t} = \mathcal{R}(g_{t})$ is called 
\begin{itemize}
\item the \textit{Ricci flow} if $a = 2$, $b = c = 0$.
\item the \textit{Yamabe flow} if $a = c = 0$, $b = 1$.
\item the \textit{Ricci-Bourguignon flow} if $a = 2$, $b \neq 0$, $c = 0$.
\item the \textit{RG-2 flow} if $a = 2$, $b = 0$, $c \neq 0$.
\end{itemize}

In these cases, $\mathcal{R}$ is a combination of the $\mathrm{Diff}(X)$-equivariant curvature tensors.
Hence one has
\begin{cor}
  \label{cor:1102164543}
  Let $g \in \mathfrak{M}(X)$ be a homogeneous maximal metric.
  Then a homogeneous solution of the metric evolution equation $\frac{\partial}{\partial t} g_{t} = \mathcal{R}(g_{t})$ with
  \begin{equation*}
    \mathcal{R}(g) = -a \mathrm{Ric}_{g} - b \mathrm{scal}_{g}\cdot g - c \mathrm{Rm}^{2}_{g} \quad (g \in \mathfrak{M}(X))
  \end{equation*}
  starting at $g$ is self-similar. In other words, a homogeneous maximal metric $g$ is a soliton for this equation.
\end{cor}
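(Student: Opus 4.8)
The plan is to deduce this directly from Proposition~\ref{prop:1102162849}. It suffices to verify two properties of the map $\mathcal{R}(g) = -a\,\mathrm{Ric}_{g} - b\,\mathrm{scal}_{g}\cdot g - c\,\mathrm{Rm}^{2}_{g}$: that it is $\mathrm{Diff}(X)$-equivariant, and that the induced map $\mathcal{R}\colon \mathfrak{M}_{G}(X)\to\mathfrak{S}_{G}(X)$ is continuous, where $G = \mathrm{Isom}(X,g)$ and $g$ is the given homogeneous maximal metric.

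For equivariance I would recall that each summand is built from \emph{natural} tensors. The Levi-Civita connection is natural, hence so is the Riemannian curvature tensor: $\mathrm{Rm}_{\varphi^{\ast}g} = \varphi^{\ast}\mathrm{Rm}_{g}$ for every $\varphi\in\mathrm{Diff}(X)$, and consequently $\mathrm{Ric}_{\varphi^{\ast}g} = \varphi^{\ast}\mathrm{Ric}_{g}$ and $\mathrm{scal}_{\varphi^{\ast}g} = \mathrm{scal}_{g}\circ\varphi$. The tensor $\mathrm{Rm}^{2}_{g}$ is obtained from $\mathrm{Rm}_{g}$ by a $g$-contraction and a trace; since both operations commute with pullback (one must be careful that the contraction $\star$ uses the same metric that is being pulled back), $\mathrm{Rm}^{2}_{\varphi^{\ast}g} = \varphi^{\ast}\mathrm{Rm}^{2}_{g}$ as well, while $\mathrm{scal}_{g}\cdot g$ clearly pulls back to $\varphi^{\ast}(\mathrm{scal}_{g}\cdot g)$. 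Since $a,b,c$ are constants, summing gives $\mathcal{R}(\varphi^{\ast}g) = \varphi^{\ast}\mathcal{R}(g)$; in particular $\mathcal{R}$ restricts to a map $\mathfrak{M}_{G}(X)\to\mathfrak{S}_{G}(X)$.

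For continuity I would use that, as already noted, $\mathfrak{M}_{G}(X)$ is an open subset of the finite-dimensional vector space $\mathfrak{S}_{G}(X)$. Fixing a base point $p\in X$ and a homogeneous presentation $X = G/H$, a $G$-invariant metric is determined by the $H$-invariant inner product it induces on $T_{p}X$; evaluating the coordinate formulas for $\mathrm{Ric}$, $\mathrm{scal}$ and $\mathrm{Rm}^{2}$ at $p$ in a fixed frame (together with the structure constants of $\mathfrak{g}$) expresses the value at $p$ of $\mathcal{R}(g)$ as a universal rational function of the coefficients of that inner product and of its inverse. Such a function is smooth on the open locus where the inner product is positive-definite, so $\mathcal{R}\colon\mathfrak{M}_{G}(X)\to\mathfrak{S}_{G}(X)$ is continuous, indeed smooth. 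With both hypotheses in hand, Proposition~\ref{prop:1102162849} yields that the homogeneous solution starting at $g$ is self-similar, hence $g$ is a soliton; the Ricci, Yamabe, Ricci--Bourguignon and RG-2 flows are the special cases $(a,b,c)=(2,0,0),(0,1,0),(2,b,0),(2,0,c)$, so all of them are covered at once. I do not anticipate a serious obstacle; the only point requiring mild care is the bookkeeping of naturality for $\mathrm{Rm}^{2}_{g}$ noted above.
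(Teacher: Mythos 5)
Your proposal is correct and follows exactly the paper's route: the paper likewise disposes of this corollary by observing that $\mathcal{R}$ is a combination of $\mathrm{Diff}(X)$-equivariant curvature tensors whose restriction to $\mathfrak{M}_{G}(X)$ is continuous, and then invoking Proposition~\ref{prop:1102162849}. Your added detail on naturality of $\mathrm{Rm}^{2}_{g}$ and on continuity via rational dependence on the inner product is just a fuller write-up of the same argument.
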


Also, one can apply Proposition~\ref{prop:1102162849} for the Bach flow.
For $g \in \mathfrak{M}(X)$, define $\mathcal{R}(g) \in \mathfrak{S}(X)$ by
\begin{equation*}
  \mathcal{R}(g)(x,y) = \frac{1}{n-3}\nabla_{\star}\nabla_{\bullet}W_{g}(x,\star,y,\bullet) + \frac{1}{n-2}\mathrm{Ric}_{g}(\star,\bullet)W_{g}(x,\star,y,\bullet),
\end{equation*}
where $W_{g}$ is the Weyl tensor with respect to $g$, and $\star$ and $\bullet$ mean the metric contraction with respect to $g$.
Then the $(0,2)$-tensor $\mathcal{R}(g)$ is called the \textit{Bach tensor} for $g$, and the equation $\frac{\partial}{\partial t}g_{t} = -\mathcal{R}(g_{t})$ is called the \textit{Bach flow}.

The Bach tensor $\mathcal{R}$ is also defined by the combination of the $\mathrm{Diff}(X)$-equivariant curvature tensors.
This yields that
\begin{cor}
  Let $g \in \mathfrak{M}(X)$ be a homogeneous maximal metric.
  Then a homogeneous solution of the Bach flow
  starting at $g$ is self-similar. In other words, a homogeneous maximal metric $g$ is a soliton for the Bach flow.
\end{cor}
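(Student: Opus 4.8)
The plan is simply to verify the two hypotheses of Proposition~\ref{prop:1102162849} for $\mathcal{R}$ equal to the Bach tensor and then to quote that proposition. There are exactly two things to check: that $\mathcal{R} : \mathfrak{M}(X) \to \mathfrak{S}(X)$ is $\mathrm{Diff}(X)$-equivariant, and that the induced map $\mathcal{R} : \mathfrak{M}_{G}(X) \to \mathfrak{S}_{G}(X)$ is continuous, where $G := \mathrm{Isom}(X,g)$.

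Equivariance is immediate from the naturality of the constituents of the Bach tensor. For $\varphi \in \mathrm{Diff}(X)$ one has $\nabla^{\varphi^{\ast}g} = \varphi^{\ast}\nabla^{g}$, $\mathrm{Ric}_{\varphi^{\ast}g} = \varphi^{\ast}\mathrm{Ric}_{g}$, $W_{\varphi^{\ast}g} = \varphi^{\ast}W_{g}$, and pullback commutes with the metric contractions taken with respect to $\varphi^{\ast}g$; consequently $\mathcal{R}(\varphi^{\ast}g) = \varphi^{\ast}\mathcal{R}(g)$. In particular, since $G \subset \mathrm{Diff}(X)$, the Bach tensor of a $G$-invariant metric is $G$-invariant, so $\mathcal{R}$ restricts to a map $\mathfrak{M}_{G}(X) \to \mathfrak{S}_{G}(X)$.

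For continuity I would use that $g$ being homogeneous makes $X = G/H$ a homogeneous space with $H = G_{p}$ for a fixed base point $p$, and that $\mathfrak{M}_{G}(X)$ is identified with the open set of $\mathrm{Ad}(H)$-invariant inner products on $\mathfrak{m} = T_{p}X$. In this identification, the value at $p$ of the curvature tensor, the Ricci tensor, the Weyl tensor, and their iterated covariant derivatives are given by universal expressions that are polynomial in the components of the inner product and of its inverse, the structure constants of $\mathfrak{g}$ being fixed; so no analytic subtlety enters, and the only denominators appearing in $\mathcal{R}$ are the fixed nonzero constants $n-2$, $n-3$ (so one is tacitly in dimension $n \geq 4$, where the Bach flow is considered) together with powers of the determinant, which is nowhere zero on $\mathfrak{M}_{G}(X)$. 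Hence $\mathcal{R} : \mathfrak{M}_{G}(X) \to \mathfrak{S}_{G}(X)$ is a rational, in particular smooth and continuous, map; this is exactly the reasoning already used for Corollary~\ref{cor:1102164543}.

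With both hypotheses verified, Proposition~\ref{prop:1102162849} applies directly: the homogeneous solution $\{g_{t}\}$ of the Bach flow with $g_{0} = g$ exists, stays in $\mathfrak{M}_{G}(X)$, and therefore satisfies $\mathrm{Isom}(X, g_{0}) = G \subset \mathrm{Isom}(X, g_{t})$ for every $t$. Maximality of $g$, via the characterization (\ref{eq:0114133517}), then forces $[g_{t}] = [g_{0}]$ for all $t$, i.e. the solution is self-similar and $g$ is a soliton for the Bach flow. No real obstacle is expected here; the only point worth stating with a little care is the continuity claim, and even there it is a routine observation once one recalls that for homogeneous metrics all curvature quantities are algebraic in the metric.
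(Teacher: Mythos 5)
Your proposal is correct and follows exactly the paper's route: observe that the Bach tensor is built from $\mathrm{Diff}(X)$-equivariant curvature quantities, check continuity of the induced map $\mathfrak{M}_{G}(X) \to \mathfrak{S}_{G}(X)$, and invoke Proposition~\ref{prop:1102162849}. The paper states this in one line; your version merely spells out the equivariance and continuity checks (including the tacit $n \geq 4$ restriction) in more detail.
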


\subsection{maximal metrics and maximal symmetry metrics}
\label{20220922174639}

A left-invariant metric $g$ on a Lie group $G$ is called a \textit{maximal symmetry metric} if
for all left-invariant metric $h$ on $G$ there exists $\varphi \in \mathrm{Aut}(G)$ such that $\mathrm{Isom}(G,h) \subset \mathrm{Isom}(G,\varphi.g)$,
where $\varphi.g := g(d\varphi^{-1}, d\varphi^{-1})$.
Namely, a maximal symmetry metric $g$ is the one whose isometry group is maximum among the set of left-invariant metrics up to automorphism.
The notion of maximal symmetry metric has been introduced by Jablonski-Gordon (\cite{MR3909903}).

\begin{prop}[\cite{MR2800365}]
  \label{prop:0510140912}
A left-invariant Ricci soliton metric on a simply connected unimodular completely solvable Lie group
is a maximal symmetry metric.
\end{prop}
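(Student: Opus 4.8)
The plan is to reduce the proposition to two structural facts: a precise description of the isometry group of a left-invariant metric on a simply connected completely solvable unimodular Lie group $G$, and a maximality property of the isotropy subgroup of the soliton inside $\mathrm{Aut}(\mathfrak{g})$. First I would recall the Gordon--Wilson structure theorem for isometry groups of solvmanifolds: since $G$ is simply connected, completely solvable and unimodular, every left-invariant metric $h$ on $G$ satisfies $\mathrm{Isom}(G,h)=L_{G}\rtimes H_{h}$, where $L_{G}$ is the group of left translations and $H_{h}:=\{\varphi\in\mathrm{Isom}(G,h)\mid\varphi(e)=e\}$ is the isotropy at the identity; moreover $L_{G}$ is normal in $\mathrm{Isom}(G,h)$, and under the canonical isomorphism $\mathrm{Aut}(G)\cong\mathrm{Aut}(\mathfrak{g})$ the group $H_{h}$ is identified with $\mathrm{Aut}(\mathfrak{g})\cap\mathrm{O}(\langle\,,\,\rangle_{h})$, where $\langle\,,\,\rangle_{h}$ is the inner product induced by $h$ on $\mathfrak{g}=T_{e}G$. (Complete solvability makes $L_{G}$ canonically determined, while unimodularity excludes the enlargement of the isometry group that occurs, e.g., for real hyperbolic space realized as a left-invariant metric on a completely solvable but non-unimodular group.) Given this, for any $\varphi\in\mathrm{Aut}(G)$ the metric $\varphi.g$ is again left-invariant, $\mathrm{Isom}(G,\varphi.g)=L_{G}\rtimes H_{\varphi.g}$ with the \emph{same} $L_{G}$, and $H_{\varphi.g}=\varphi H_{g}\varphi^{-1}$; since any element of $H_{h}$ fixes $e$, it can lie in $L_{G}\rtimes H_{\varphi.g}$ only if its translation part is trivial, i.e.\ only if it lies in $H_{\varphi.g}$. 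Hence $\mathrm{Isom}(G,h)\subseteq\mathrm{Isom}(G,\varphi.g)$ \emph{if and only if} $H_{h}\subseteq\varphi H_{g}\varphi^{-1}$, and, writing $g_{0}$ for the given Ricci soliton metric, the proposition reduces to: for every left-invariant $h$ there exists $\varphi\in\mathrm{Aut}(\mathfrak{g})$ with $H_{h}\subseteq\varphi H_{g_{0}}\varphi^{-1}$.

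The heart of the matter is the claim that $H_{g_{0}}=\mathrm{Aut}(\mathfrak{g})\cap\mathrm{O}(\langle\,,\,\rangle_{0})$ is a \emph{maximal compact subgroup} of $\mathrm{Aut}(\mathfrak{g})$, and this is where the Ricci soliton hypothesis is used. I would invoke Lauret's structure theory of solvsolitons: a left-invariant Ricci soliton on a solvable Lie group is a ``standard'' solvsoliton, which in algebraic terms means that the soliton inner product $\langle\,,\,\rangle_{0}$ may be chosen so that the derivation algebra $\mathrm{Der}(\mathfrak{g})=\mathrm{Lie}(\mathrm{Aut}(\mathfrak{g}))$ is self-adjoint with respect to $\langle\,,\,\rangle_{0}$ (closed under the $\langle\,,\,\rangle_{0}$-transpose); equivalently, $g_{0}$ is a critical point of the norm-squared of the moment map for the $\mathrm{GL}(\mathfrak{g})$-action on the variety of Lie brackets, which forces $\mathrm{Aut}(\mathfrak{g})$ to be stable under the Cartan involution determined by $\langle\,,\,\rangle_{0}$. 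Since $\mathrm{Aut}(\mathfrak{g})$ is a real linear algebraic group, Mostow's theorem on self-adjoint algebraic groups then yields a Cartan decomposition $\mathrm{Aut}(\mathfrak{g})=H_{g_{0}}\cdot\exp(\mathfrak{p})$ in which $H_{g_{0}}$ is a maximal compact subgroup. (Uniqueness of the soliton up to isometry and scaling, also due to Lauret, is not needed here.)

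To finish, let $h$ be any left-invariant metric on $G$. Then $H_{h}=\mathrm{Aut}(\mathfrak{g})\cap\mathrm{O}(\langle\,,\,\rangle_{h})$ is a compact subgroup of $\mathrm{Aut}(\mathfrak{g})$, and $\mathrm{Aut}(\mathfrak{g})$ has finitely many connected components since it is algebraic; by the Cartan--Iwasawa--Malcev theorem $H_{h}$ is contained in a maximal compact subgroup of $\mathrm{Aut}(\mathfrak{g})$, and all such subgroups are conjugate to $H_{g_{0}}$. Choosing $\varphi\in\mathrm{Aut}(\mathfrak{g})$ with $H_{h}\subseteq\varphi H_{g_{0}}\varphi^{-1}$ and using the first step gives $\mathrm{Isom}(G,h)=L_{G}\rtimes H_{h}\subseteq L_{G}\rtimes\varphi H_{g_{0}}\varphi^{-1}=\mathrm{Isom}(G,\varphi.g_{0})$, which is precisely the maximal symmetry property. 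The principal obstacle is the middle step: proving that the soliton isotropy is maximal compact in $\mathrm{Aut}(\mathfrak{g})$ genuinely requires Lauret's classification (standardness) of solvsolitons together with Mostow's self-adjoint group theory; once the Gordon--Wilson description and this maximality are in hand, the remaining steps are essentially formal.
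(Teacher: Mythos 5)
This proposition is quoted in the paper from Jablonski's work \cite{MR2800365} without proof, so there is no internal argument to compare against; your proposal has to be judged against the known proof in the literature. Your overall architecture is the right one and is essentially the standard one: the Gordon--Wilson description $\mathrm{Isom}(G,h)\cong(\mathrm{Aut}(\mathfrak{g})\cap O(\langle\,,\rangle_h))\ltimes L_G$ (the same input the paper uses in the proof of Theorem~4.9), the correct reduction of maximal symmetry to the statement that every compact subgroup $\mathrm{Aut}(\mathfrak{g})\cap O(\langle\,,\rangle_h)$ is conjugate into $H_{g_0}$, and the Cartan--Iwasawa--Malcev conjugacy of maximal compacts in the algebraic group $\mathrm{Aut}(\mathfrak{g})$. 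All of that is fine.

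The gap is in your justification of the central claim. You assert that the soliton condition forces $\mathrm{Der}(\mathfrak{g})$ (equivalently $\mathrm{Aut}(\mathfrak{g})$) to be closed under the $\langle\,,\rangle_0$-transpose, and you then invoke Mostow's theorem on self-adjoint groups. This is false. Take $\mathfrak{h}_3=\mathrm{span}\{e_1,e_2,e_3\}$ with $[e_1,e_2]=e_3$ and the orthonormal inner product, which is a nilsoliton (and appears in this very paper in Corollary~4.6): the map $D$ with $De_1=e_3$, $De_2=De_3=0$ is a derivation, but its transpose sends the center $\mathbb{R}e_3$ to $\mathbb{R}e_1$ and hence is not a derivation, since every derivation of $\mathfrak{h}_3$ preserves $[\mathfrak{h}_3,\mathfrak{h}_3]=\mathbb{R}e_3$. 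The same failure occurs for the unimodular non-nilpotent example $\mathfrak{sol}^3$. The underlying reason is that a soliton bracket is a critical point of $\|m\|^2$ but \emph{not} a minimal vector (a zero of the moment map); the Richardson--Slodowy self-adjointness of stabilizers applies only to minimal vectors, and nilpotent brackets are never minimal vectors since $0$ lies in their orbit closure. What is true --- and is precisely the nontrivial content of Jablonski's theorem --- is the weaker conclusion that $\mathrm{Aut}(\mathfrak{g})\cap O(\langle\,,\rangle_0)$ is nevertheless a maximal compact subgroup of $\mathrm{Aut}(\mathfrak{g})$ (one checks directly for $\mathfrak{h}_3$ that this intersection is $O(2)$ inside $\mathrm{Aut}(\mathfrak{h}_3)\cong\mathrm{GL}(2,\mathbb{R})\ltimes\mathbb{R}^2$); but its proof requires the finer structure theory of critical points of $\|m\|^2$ together with the uniqueness of the solvsoliton --- the very uniqueness you declare "not needed here" --- rather than a one-line appeal to Mostow. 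So the reduction steps of your argument are correct, but the step carrying all the weight is not established by the mechanism you propose.
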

Hence, any maximal metric on a  simply connected unimodular completely solvable Lie group has maximal symmetry.
On the other hands, when $G$ is not unimodular completely solvable, 
left-invariant maximal metrics on $G$ are not necessarily maximally symmetric.
For examples, let $\mathfrak{g} := \mathrm{span}\{v_{1}, v_{2}, v_{3}\}$ be a Lie algebra whose nonzero bracket relation is given by
\begin{equation*}
  [v_{1}, v_{2}] = v_{2}.
\end{equation*}
Note that $\mathfrak{g}$ is a non-unimodular completely solvable Lie algebra.
Let $G$ be the simply connected Lie group with Lie algebra $\mathfrak{g}$.
Then Jablonski and Gordon have shown that there are no maximal symmetry metrics on $G$ (\cite{MR3909903}).
On the other hand, the left-invariant metric with orthonormal frame $\{v_{1}, v_{2}, v_{3}\}$ is isometric to
$g_{w}$ with $w = (1,0)$ in Theorem~\ref{theo:1012173304}, and hence is a maximal metric.

\subsection{maximal metrics have the maximal isometry groups}
\label{20220922174711}

One of the interesting properties of maximal metrics $g \in \mathfrak{M}(X)$ is the maximallity of the isometry groups
in the sense that
$\mathrm{Isom}(X,\langle , \rangle) \subset \mathrm{Isom}(X,\langle , \rangle^{\prime})$ implies
$\mathrm{Isom}(X,\langle , \rangle) = \mathrm{Isom}(X,\langle , \rangle^{\prime})$ for all Riemannian metric $\langle , \rangle^{\prime}$ on $X$.
To see this, we firstly give an easy lemma, which will be used repeatedly in this article.

\begin{lemm}
  \label{lemm:1022113054}
Let $H$ be a Lie group with finitely many connected components, and assume that $H$ and a Lie group $H^{\prime}$ are isomorphic with each other.
  If $H \subset H^{\prime}$ then one has $H = H^{\prime}$.
\end{lemm}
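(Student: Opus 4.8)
The plan is to pass to the Lie algebra level and use a dimension count together with an argument on connected components. First I would observe that since $H \subset H'$ as Lie groups, the Lie algebra $\mathfrak{h} = \mathrm{Lie}(H)$ is a subalgebra of $\mathfrak{h}' = \mathrm{Lie}(H')$; on the other hand, since $H$ and $H'$ are isomorphic as Lie groups, their Lie algebras are isomorphic, so in particular $\dim \mathfrak{h} = \dim \mathfrak{h}'$. A subalgebra of the same dimension as the ambient algebra is the whole algebra, so $\mathfrak{h} = \mathfrak{h}'$. Consequently the identity components agree: $H_0 = H'_0$, since the identity component of a Lie group is the connected subgroup integrating its Lie algebra, and both are connected Lie groups with the same Lie algebra sitting inside the same ambient group.

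Next I would handle the component groups. Since $H_0 = H'_0$ is a normal subgroup of both $H$ and $H'$, we get an inclusion of quotient groups $H/H_0 \hookrightarrow H'/H'_0$ induced by $H \subset H'$. By hypothesis $H$ has finitely many connected components, so $H/H_0$ is a finite group, say of order $k$. The isomorphism $H \cong H'$ carries identity component to identity component, hence induces an isomorphism $H/H_0 \cong H'/H'_0$; in particular $H'/H'_0$ is also finite of order $k$. An injective map from a finite set into a finite set of the same cardinality is a bijection, so $H/H_0 \to H'/H'_0$ is surjective as well. Therefore every connected component of $H'$ already meets $H$, which combined with $H_0 = H'_0$ gives $H = H'$.

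The main point to be careful about — the only place where the hypotheses are genuinely used — is the finiteness of $\pi_0(H)$: without it, one could have a proper inclusion $H \subsetneq H'$ with $H \cong H'$ (for instance an abstract isomorphism permuting infinitely many components), and the cardinality argument in the second paragraph would fail. I would also note that ``isomorphic as Lie groups'' is what licenses both the equality of dimensions in the first step and the equality of the (finite) number of components in the second step; a merely abstract group isomorphism between Lie groups is automatically smooth, so there is no loss in reading the hypothesis either way, but it is cleanest to invoke it directly at the Lie algebra level and at the level of $\pi_0$.
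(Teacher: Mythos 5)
Your argument is correct and complete: the dimension count forces $\mathfrak{h}=\mathfrak{h}'$, hence $H_0=H_0'$ by uniqueness of the connected subgroup integrating a given subalgebra, and the induced injection $H/H_0\hookrightarrow H'/H_0'$ between finite groups of equal order is then surjective, giving $H'=H\cdot H_0'=H$. The paper states this lemma without proof (offering only the counterexample $2\mathbb{Z}\subsetneq\mathbb{Z}$ for the necessity of finiteness), so there is no official argument to compare against; yours is the natural one and uses exactly the two hypotheses where they are needed.

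One caveat on your closing remark: it is \emph{not} true that an abstract group isomorphism between Lie groups is automatically smooth, and the lemma would in fact be false under that reading. For instance $\mathbb{R}$ and $\mathbb{R}^2$ are isomorphic as abstract groups (both being $\mathbb{Q}$-vector spaces of continuum dimension), so $H=\mathbb{R}\times\{0\}\subsetneq\mathbb{R}^2=H'$ would violate the conclusion even though $H$ is connected. Your proof itself is unaffected, since you invoke the isomorphism only through $\dim\mathfrak{h}=\dim\mathfrak{h}'$ and $\lvert\pi_0(H)\rvert=\lvert\pi_0(H')\rvert$, both of which require the isomorphism to be one of Lie (or at least topological) groups --- which is how the hypothesis must be read, and how it arises in the paper's applications, where the isomorphism is conjugation by a diffeomorphism.
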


Note that the assumption of the finiteness is essentially needed.
For examples, $2 \mathbb{Z} := \{2z \mid z \in \mathbb{Z}\} \subsetneq \mathbb{Z}$, and $2\mathbb{Z} \cong \mathbb{Z}$.

\begin{prop}
  \label{prop:1226212351}
  Let $\langle, \rangle \in \mathfrak{M}(X)$
  be a maximal metric
  whose isometry group has finitely many connected components.
  Then the isometry group $\mathrm{Isom}(X,\langle , \rangle)$
  is maximal with respect to the inclusion ``$\subset$''.
  In particular, a homogeneous maximal metric
  has the maximal isometry group.
\end{prop}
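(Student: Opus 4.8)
The plan is to derive the proposition directly from the characterisation (\ref{eq:0114133517}) of maximal metrics together with Lemma~\ref{lemm:1022113054}. So let $\langle , \rangle'\in\mathfrak{M}(X)$ be an arbitrary Riemannian metric with $\mathrm{Isom}(X,\langle , \rangle)\subset\mathrm{Isom}(X,\langle , \rangle')$; I must show this inclusion is an equality. First I would invoke (\ref{eq:0114133517}): since $\langle , \rangle$ is maximal, the hypothesis forces $[\langle , \rangle]=[\langle , \rangle']$, so by definition of $\sim$ there are $\lambda>0$ and $\varphi\in\mathrm{Diff}(X)$ with $\langle , \rangle'=\varphi^{\ast}(\lambda\langle , \rangle)$. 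Because rescaling does not change the isometry group and a pullback conjugates it, I get $\mathrm{Isom}(X,\langle , \rangle')=\varphi^{-1}\,\mathrm{Isom}(X,\langle , \rangle)\,\varphi$, which is isomorphic as a Lie group to $\mathrm{Isom}(X,\langle , \rangle)$; in particular it has the same, hence finitely many, connected components.

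At this point I would apply Lemma~\ref{lemm:1022113054} with $H:=\mathrm{Isom}(X,\langle , \rangle)$ and $H':=\mathrm{Isom}(X,\langle , \rangle')$: we have $H\subset H'$, $H\cong H'$, and $H$ has finitely many connected components, so the lemma yields $H=H'$. As $\langle , \rangle'$ was arbitrary, this says precisely that $\mathrm{Isom}(X,\langle , \rangle)$ is maximal with respect to ``$\subset$'' among the isometry groups of Riemannian metrics on $X$.

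For the ``in particular'' clause it remains to check that the isometry group $G:=\mathrm{Isom}(X,\langle , \rangle)$ of a connected homogeneous Riemannian manifold has only finitely many connected components, and then quote the first part. Here I would argue as follows. By the Myers--Steenrod theorem $G$ is a Lie group acting properly (and transitively) on $X$, so each isotropy subgroup $G_{p}$ is compact and thus has finitely many components. Since $X$ is connected and $G$ acts transitively, the identity component $G_{0}$ already acts transitively: the orbit map $G\to X$ is a submersion and $G_{0}$ is open in $G$, so every $G_{0}$-orbit is open, and these open orbits partition the connected space $X$. Transitivity of $G_{0}$ gives $G=G_{0}G_{p}$, whence $G/G_{0}$ is a quotient of the finite group $G_{p}/(G_{p}\cap G_{0})$ and is therefore finite. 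I expect no serious obstacle: the only points needing a little care are the transitivity of $G_{0}$ and the compactness of the isotropy group, both of which are standard consequences of properness of the isometric action.
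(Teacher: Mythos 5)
Your argument is correct and follows essentially the same route as the paper: use maximality to produce an isometry between $(X,\lambda\langle,\rangle)$ and $(X,\langle,\rangle^{\prime})$, observe that conjugation by that diffeomorphism makes the two isometry groups isomorphic, and then apply Lemma~\ref{lemm:1022113054} to upgrade the inclusion to an equality. Your additional verification that a homogeneous metric's isometry group has finitely many components (via properness, compact isotropy, and transitivity of the identity component) is a correct filling-in of a detail the paper leaves implicit.
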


\begin{proof}
  \label{proof:0925115159}
  Take any Riemannian metric $\langle , \rangle^{\prime}$ with
  $\mathrm{Isom}(X,\langle , \rangle) \subset \mathrm{Isom}(X,\langle , \rangle^{\prime})$.
  We show that
  $\mathrm{Isom}(X,\langle , \rangle) = \mathrm{Isom}(X,\langle , \rangle^{\prime})$.
  Since $\langle, \rangle$ is maximal,
  there exists $\lambda >0$ and $\varphi \in \mathrm{Diff}(X,\langle , \rangle)$
  such that $\varphi$ is an isometry between $(X,\lambda \langle , \rangle)$
  and $(X,\langle , \rangle^{\prime})$.
  Hence one has
  \begin{equation*}
    \mathrm{Isom}(X,\langle , \rangle) = \mathrm{Isom}(X,\lambda \langle , \rangle) = \mathrm{Isom}(X,\varphi. \langle , \rangle^{\prime}) = \varphi \mathrm{Isom}(X,\langle , \rangle^{\prime}) \varphi^{-1}.
  \end{equation*}
  Hence $\mathrm{Isom}(X,\langle , \rangle)$ and $\mathrm{Isom}(X,\langle , \rangle)^{\prime}$ are isomorphic with each other.
  By Lemma~\ref{lemm:1022113054}, they coincide with each other.  
\end{proof}


Here, we mention  the correspondence between maximal symmetry left-invariant metrics and maximal isometry left-invariant metrics.
Firstly, we show that
\begin{prop}
  \label{prop:0201161326}
  Let $\langle, \rangle$ be a left-invariant metric on a connected Lie group $G$.
  If $\langle, \rangle$ is a maximal symmetry metric, then $\langle, \rangle$ has the maximal isometry group.
\end{prop}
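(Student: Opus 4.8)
The plan is to compare the two "maximality" conditions by carefully unwinding the definition of maximal symmetry metric. Suppose $\langle,\rangle$ is a maximal symmetry metric on the connected Lie group $G$, and let $\langle,\rangle'$ be any Riemannian metric on $G$ (not assumed left-invariant!) with $\mathrm{Isom}(G,\langle,\rangle)\subset\mathrm{Isom}(G,\langle,\rangle')$. I want to conclude that these two isometry groups coincide. The first observation is that $G$ itself, acting by left translations, sits inside $\mathrm{Isom}(G,\langle,\rangle)$, hence inside $\mathrm{Isom}(G,\langle,\rangle')$; so $\langle,\rangle'$ is a left-$G$-invariant metric, i.e. a left-invariant metric on $G$. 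This is the key reduction that lets the maximal symmetry hypothesis apply.

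Once $\langle,\rangle'$ is known to be left-invariant, the definition of maximal symmetry metric gives an automorphism $\varphi\in\mathrm{Aut}(G)$ with $\mathrm{Isom}(G,\langle,\rangle')\subset\mathrm{Isom}(G,\varphi.\langle,\rangle)$. But an automorphism of $G$ is in particular a diffeomorphism, and $\varphi$ is an isometry between $(G,\langle,\rangle)$ and $(G,\varphi.\langle,\rangle)$ by the very definition $\varphi.\langle,\rangle = \langle d\varphi^{-1}\cdot, d\varphi^{-1}\cdot\rangle$; hence $\mathrm{Isom}(G,\varphi.\langle,\rangle) = \varphi\,\mathrm{Isom}(G,\langle,\rangle)\,\varphi^{-1}$, which is isomorphic (indeed conjugate inside $\mathrm{Diff}(G)$) to $\mathrm{Isom}(G,\langle,\rangle)$. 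Chaining the inclusions,
\begin{equation*}
  \mathrm{Isom}(G,\langle,\rangle)\ \subset\ \mathrm{Isom}(G,\langle,\rangle')\ \subset\ \varphi\,\mathrm{Isom}(G,\langle,\rangle)\,\varphi^{-1}.
\end{equation*}

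To close the loop I would note that $\mathrm{Isom}(G,\langle,\rangle)$ and $\varphi\,\mathrm{Isom}(G,\langle,\rangle)\,\varphi^{-1}$ are isomorphic Lie groups; if the isometry group has finitely many connected components, Lemma~\ref{lemm:1022113054} forces equality throughout the chain, so $\mathrm{Isom}(G,\langle,\rangle) = \mathrm{Isom}(G,\langle,\rangle')$ and $\langle,\rangle$ has the maximal isometry group. The point that requires the most care is the finiteness-of-components issue: the isometry group of a left-invariant metric on a connected Lie group need not a priori have finitely many components, so to apply Lemma~\ref{lemm:1022113054} cleanly one should either invoke that this is automatic for maximal symmetry metrics (since $G$ acts transitively, $\mathrm{Isom}(G,\langle,\rangle)$ is the isometry group of a homogeneous space and has finitely many components by a theorem on isometry groups of homogeneous Riemannian manifolds), or argue directly with the identity components and then handle the component groups separately. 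I expect this finiteness verification to be the main obstacle; everything else is a straightforward diagram chase through the definitions.
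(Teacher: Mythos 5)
Your argument is correct and is essentially the paper's proof: reduce to the case that $\langle,\rangle'$ is left-invariant via the left translations sitting inside $\mathrm{Isom}(G,\langle,\rangle)$, apply the maximal-symmetry definition to sandwich the isometry groups between two conjugate (hence isomorphic) copies, and invoke Lemma~\ref{lemm:1022113054}. Your additional remark that the finiteness-of-components hypothesis of that lemma is automatic here because $(G,\langle,\rangle')$ is homogeneous is a point the paper leaves implicit, and you resolve it correctly.
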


\begin{proof}
  \label{proof:0201161405}
  Assume that $\langle, \rangle$ is maximally symmetric.  
  Take any Riemannian metric $\langle, \rangle^{\prime}$ with $\mathrm{Isom}(G,\langle, \rangle) \subset \mathrm{Isom}(G,\langle, \rangle^{\prime})$.
  Since $\langle,\rangle$ is left-invariant, one has $\langle, \rangle^{\prime}$ is also left-invariant.
  Then one has $\mathrm{Isom}(G,\varphi.\langle, \rangle^{\prime}) \subset \mathrm{Isom}(G,\langle, \rangle)$ for some $\varphi \in \mathrm{Aut}(G)$.
  This yields that
  \begin{equation*}
    \mathrm{Isom}(G,\varphi.\langle, \rangle^{\prime}) \subset \mathrm{Isom}(G,\langle, \rangle) \subset \mathrm{Isom}(G,\langle, \rangle^{\prime}).
  \end{equation*}
  On the other hand, one has $\mathrm{Isom}(G,\varphi.\langle, \rangle^{\prime}) = \mathrm{Isom}(G,\langle, \rangle^{\prime})$ by Lemma~\ref{lemm:1022113054}.
  This concludes that $\langle, \rangle$ has the maximal isometry group.
\end{proof}

This concludes that the following diagram holds for arbitrary left-invariant metrics on arbitrary  Lie groups:
\begin{figure}[h]
$    \begin{array}{ccc}
    \mbox{maximal} & \Rightarrow & \mbox{maximal isometry}\\
    \Downarrow & & \Uparrow\\
    \mbox{Ricci soliton} &  & \mbox{maximal symmetry}
     \end{array}$
     \caption{for general case}
            \label{figure:0712142357}
\end{figure}

For some special cases, one can add several arrows to Figure~\ref{figure:0712142357}.
To see this,we show the following:
\begin{prop}
  \label{prop:0201162116}
  Assume that a connected Lie group $G$ admits a maximal symmetry left-invariant metric.
  Let $\langle, \rangle$ be a left-invariant metric on $G$.
  Then $\langle, \rangle$ has a maximal isometry group
  if and only if $\langle, \rangle$ is a maximal symmetry left-invariant metric.
\end{prop}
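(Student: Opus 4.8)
The plan is to prove the two implications separately. The forward implication, ``maximal symmetry $\Rightarrow$ maximal isometry,'' is exactly Proposition~\ref{prop:0201161326} and requires nothing further (in fact it does not even use the standing hypothesis that $G$ admits a maximal symmetry metric). So all the work is in the converse: assuming $\langle,\rangle$ has a maximal isometry group, I must show it is a maximal symmetry metric.

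First I would fix a maximal symmetry left-invariant metric $\langle,\rangle_{0}$ on $G$, which exists by hypothesis, and record the elementary bookkeeping identity already used in the proof of Proposition~\ref{prop:1226212351}: for $\varphi\in\mathrm{Aut}(G)$ the map $\varphi$ is an isometry from $(G,g)$ onto $(G,\varphi.g)$, so $\mathrm{Isom}(G,\varphi.g)=\varphi\,\mathrm{Isom}(G,g)\,\varphi^{-1}$. Applying the maximal symmetry property of $\langle,\rangle_{0}$ to the left-invariant metric $h=\langle,\rangle$ yields $\varphi_{0}\in\mathrm{Aut}(G)$ with $\mathrm{Isom}(G,\langle,\rangle)\subset\mathrm{Isom}(G,\varphi_{0}.\langle,\rangle_{0})$. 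Since $\langle,\rangle$ has a maximal isometry group, this inclusion is forced to be an equality:
\[
\mathrm{Isom}(G,\langle,\rangle)=\mathrm{Isom}(G,\varphi_{0}.\langle,\rangle_{0})=\varphi_{0}\,\mathrm{Isom}(G,\langle,\rangle_{0})\,\varphi_{0}^{-1},
\]
equivalently $\mathrm{Isom}(G,\langle,\rangle_{0})=\mathrm{Isom}(G,\varphi_{0}^{-1}.\langle,\rangle)$.

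Next I would verify the maximal symmetry of $\langle,\rangle$ straight from the definition. Given an arbitrary left-invariant metric $h$ on $G$, maximal symmetry of $\langle,\rangle_{0}$ provides $\psi\in\mathrm{Aut}(G)$ with $\mathrm{Isom}(G,h)\subset\mathrm{Isom}(G,\psi.\langle,\rangle_{0})$. Conjugating the identity above by $\psi$ and using the bookkeeping identity twice gives
\[
\mathrm{Isom}(G,\psi.\langle,\rangle_{0})=\psi\,\mathrm{Isom}(G,\langle,\rangle_{0})\,\psi^{-1}=\psi\varphi_{0}^{-1}\,\mathrm{Isom}(G,\langle,\rangle)\,\varphi_{0}\psi^{-1}=\mathrm{Isom}\!\bigl(G,(\psi\varphi_{0}^{-1}).\langle,\rangle\bigr).
\]
Since $\psi\varphi_{0}^{-1}\in\mathrm{Aut}(G)$ and $h$ was arbitrary, this is precisely the statement that $\langle,\rangle$ is a maximal symmetry metric, completing the converse.

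I do not anticipate a real obstacle here; the argument is essentially bookkeeping with automorphism conjugations. The one point to be careful about is the direction in which the ``maximal isometry group'' hypothesis is used: it upgrades an inclusion $\mathrm{Isom}(G,\langle,\rangle)\subset\mathrm{Isom}(G,\langle,\rangle')$ into an equality, which is exactly what is needed after invoking the maximal symmetry of $\langle,\rangle_{0}$. One should also note that left-invariance of $h$ enters only implicitly, through the fact that the definition of maximal symmetry quantifies over left-invariant metrics, so no analogue of the $e^{f}g$ construction from the proof of Proposition~\ref{prop:1102152520} is needed.
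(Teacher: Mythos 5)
Your proposal is correct and follows essentially the same route as the paper: fix a maximal symmetry metric guaranteed by the hypothesis, apply its maximal symmetry to $\langle,\rangle$ to get an inclusion of isometry groups, use the maximal isometry hypothesis to upgrade that inclusion to an equality (hence a conjugacy by an automorphism), and then transfer the maximal symmetry property to $\langle,\rangle$ by composing automorphisms. The only difference is cosmetic bookkeeping in how the conjugations are arranged.
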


\begin{proof}
  \label{proof:0201162205}
  We have shown ``if part'' in Proposition~\ref{prop:0201161326}.
  We prove ``only if part''.
  Assume that $\langle, \rangle$ has the maximal isometry group.  
  We show that $\langle, \rangle$ is a maximal symmetry metric.  
  Take any left-invariant metric $g$ on $G$.
  We have only to show that there exists some $\varphi \in \mathrm{Aut}(G)$ such that
  $\mathrm{Isom}(G,\varphi.g) \subset \mathrm{Isom}(G,\langle, \rangle)$.
  By the assumption, there exists  a maximal symmetry  metric $\langle, \rangle^{\prime}$ on $G$.
  Then there exists $\phi, \psi \in \mathrm{Aut}(G)$ such that
  \begin{equation*}
    \mathrm{Isom}(\phi.g) \subset \mathrm{Isom}(G,\langle, \rangle^{\prime}), \quad
    \mathrm{Isom}(G,\psi.\langle, \rangle) \subset \mathrm{Isom}(G,\langle, \rangle^{\prime}).
  \end{equation*}
  This yields that $\mathrm{Isom}(G,\langle, \rangle) \subset \mathrm{Isom}(G,\psi^{-1}\langle, \rangle^{\prime})$.
  Since $\langle, \rangle$ has the maximal isometry group, 
  one has $\mathrm{Isom}(G,\psi^{-1}.\langle, \rangle^{\prime}) = \mathrm{Isom}(G,\langle, \rangle)$.
  Now define $\varphi \in \mathrm{Aut}(G)$ by $\varphi := \psi^{-1}\phi$. One has
  \begin{equation*}
    \mathrm{Isom}(G,\varphi.g) = \mathrm{Isom}(\psi^{-1}\phi.g) \subset \mathrm{Isom}(G,\psi^{-1}\langle, \rangle^{\prime}) = \mathrm{Isom}(G,\langle, \rangle),
  \end{equation*}
  which completes the proof.
\end{proof}

For examples, Jablonski and Gordon has shown that any unimodular completely solvable Lie groups always admit maximally symmetric left-invariant metrics.
Hence, for the case of unimodular completely solvable Lie groups, one has

\begin{figure}[htbp]
$    \begin{array}{ccc}
    \mbox{maximal} & \Rightarrow & \mbox{maximal isometry}\\
    \Downarrow & & \Updownarrow\\
    \mbox{Ricci soliton} & \Rightarrow & \mbox{maximal symmetry}
     \end{array}$
     \caption{for unimodular completely solvable case}
  \label{figure:0712142537}     
\end{figure}

\section{Some general theory on isolated orbits}

In this section, 
we construct some general theory on isolated orbits of proper actions.
Our goal is to show Proposition~\ref{prop:0909223530}, which plays an important role to construct examples of maximal metrics. 

Firstly, we introduce some notions of orbits of set-theoretical group actions.
Let $G$ be a group, and assume that $G$ acts on a set $X$.
Denote by $G \backslash X$ the orbit space of the $G$-action.
For orbits $G.p, G.q \in G \backslash X$, denote by $G.p \sim G.q$ (\textit{resp.} $G_{p} < G_{q}$) if there exists $q^{\prime} \in G.q$ such that $G_{p} = G_{q}$
(\textit{resp.} $G.p \subset G.q$).
Then $\sim$ is an equivalent relation on $G \backslash X$,
and $<$ is a preorder on $G \backslash X$.
An orbit $G.p$ is said to be  \textit{of maximal orbit type} if $G.p$ is maximal with respect to $<$, namely,
if $G.p < G.q$ then $G.p = G.q$.
Denote by $[G.p]$ the equivalent class of $G.p$ with respect to $\sim$.
An orbit $G.p$ is called a \textit{solitary orbit} if $[G.p] = \{G.p\}$.
By the definition, if $G.p$ has a maximal orbit type then $G.p$ is a solitary orbit.
The preorder $<$ on $G \backslash X$ induces
a preorder on the double coset space
\begin{equation*}
  G\backslash X /_{\! \sim} := \{[G.p] \mid G.p \in G\backslash X\},
\end{equation*}
which is also denoted by $<$.
Namely, we denote by
$[G.p] < [G.q]$ if $G.p < G.q^{\prime}$ for some $q^{\prime} \in G.q$.

Now we consider group actions with topology.
Let $G$ be a topological group, and assume that $G$ acts on a topological space $X$ continuously.
We consider the natural quotient topology on $G \backslash X$.
An orbit $G.p$ is called an \textit{isolated orbit} if $G.p \in G \backslash X$ is an isolated point of the subset $[G.p] \subset G \backslash X$,
that is, there exists some open subset $U \subset G \backslash X$ such that $U \cap [G.p] = \{G.p\}$.
In other words, an isolated orbit is a locally solitary orbit.
Now one has
\begin{equation}
  \label{eq:0910002409}
   \begin{array}{ccccc}
   \mbox{maximal orbit type}&\Rightarrow &\mbox{solitary}& \Rightarrow &\mbox{isolated}.
 \end{array}
\end{equation}
In general, these notions are not equivalent.
However, these notions are equivalent for some special case (\textit{cf.} Proposition~\ref{prop:0909223530}).

We review some fundamental facts on proper actions.
A continuous action of a topological group $G$ on a topological space $X$ is called \textit{proper} if the map
\begin{equation*}
  G \times X \to X \times X, \ (g,x) \mapsto (g.x,x)
\end{equation*}
is a proper map (\textit{i.e.} inverse images of compact subsets are also compact).
By the definition, one can see  that isotropy subgroups of a proper action on
a manifold are compact,
and hence have finitely many connected components.
By using Lemma~\ref{lemm:1022113054}, one can easily see that
\begin{lemm}
  \label{lemm:0910010435}
  Let $X$ be a manifold, and assume that a Lie group $G$ acts on $X$ properly.
  Then the preordered set $(G\backslash X /_{\! \sim},<)$ is a partially ordered set.
  Namely, $[G.p] < [G.q]$ and $[G.q] < [G.p]$ imply $[G.p] = [G.q]$.
\end{lemm}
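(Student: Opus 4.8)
The goal is to show that the preorder $<$ on the double coset space $G\backslash X/_{\!\sim}$ is antisymmetric when the $G$-action is proper. So suppose $[G.p] < [G.q]$ and $[G.q] < [G.p]$; I want to conclude $[G.p] = [G.q]$. Unwinding the definitions: $[G.p] < [G.q]$ means there is a point $q'$ in the orbit $G.q$ with $G.p \subset G.q'$, and $[G.q] < [G.p]$ means there is a point $p'$ in $G.p$ with $G.q \subset G.p'$. Since stabilizers along an orbit are conjugate, and conjugation is a group isomorphism, passing to these representatives lets me reduce to a statement purely about the isotropy subgroups $G_p$ and $G_q$: after relabeling, $G_p \subset G_q$ and $G_q$ is conjugate (hence isomorphic) to a subgroup of $G_p$; chaining the inclusions gives $G_p \subset G_q$ together with an injective homomorphism $G_q \hookrightarrow G_p$. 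Composing, I get that $G_p$ (or $G_q$) contains an isomorphic copy of itself, and I want to force equality of $G_p$ and $G_q$.

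The key input is that the action is proper on a manifold, so every isotropy subgroup is compact, and a compact Lie group has only finitely many connected components. This is exactly the hypothesis of Lemma~\ref{lemm:1022113054}. Concretely: from $[G.p] < [G.q]$ I get, after choosing the right orbit representative, $G_p \subset g G_q g^{-1}$ for some $g \in G$; from $[G.q] < [G.p]$ I get $G_q \subset h G_p h^{-1}$ for some $h \in G$. Then $G_p \subset g G_q g^{-1} \subset g h G_p h^{-1} g^{-1}$, i.e. $G_p$ is contained in a conjugate of itself; the conjugate $ghG_p(gh)^{-1}$ is isomorphic to $G_p$ as a Lie group, and $G_p$ has finitely many components since it is compact, so Lemma~\ref{lemm:1022113054} forces $G_p = ghG_p(gh)^{-1}$. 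Feeding this equality back through the chain forces all the inclusions to be equalities; in particular $G_p = gG_qg^{-1}$, which says $G.p \sim G.q$, i.e. $[G.p] = [G.q]$.

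The only step requiring care is the bookkeeping of orbit representatives: the relation $<$ on the double coset space is defined via ``for some $q'$ in the orbit'', so I should be explicit that replacing $p$ and $q$ by points $p_0 \in G.p$, $q_0 \in G.q$ with $G_{p_0} \subset G_{q_0}$ is harmless because $[G.p_0] = [G.p]$ and $[G.q_0] = [G.q]$, and the double coset only remembers the $\sim$-class. Once the inclusions of stabilizers are lined up as honest subgroup inclusions (up to one conjugation each), the argument is just the one-line application of Lemma~\ref{lemm:1022113054} sketched above. I do not anticipate a real obstacle here — the substance is entirely in the already-established Lemma~\ref{lemm:1022113054} and the standard fact that isotropy groups of proper actions on manifolds are compact; this lemma is essentially its translation to the double coset language.
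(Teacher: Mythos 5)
Your proposal is correct and is exactly the argument the paper intends: the paper gives no written proof beyond noting that isotropy groups of a proper action on a manifold are compact (hence have finitely many components) and invoking Lemma~\ref{lemm:1022113054}, and your chain $G_p \subset gG_qg^{-1} \subset ghG_p(gh)^{-1}$ followed by that lemma is the standard way to fill in the details. The bookkeeping of representatives via $G_{g.q}=gG_qg^{-1}$ is handled correctly.
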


Now let $(X,\langle, \rangle)$ be a complete connected Riemannian manifold, and $G \subset \mathrm{Isom}(X,\langle, \rangle)$ be a closed subgroup.
Then it is known that $G$ acts on $X$ isometrically and properly.
Also, by the existence of slices for proper actions on manifolds (\textit{e.g.} \cite{MR3362465}),
one has
\begin{prop}
  \label{prop:0910000918}
  Let $(X,\langle, \rangle)$ be a complete connected Riemannian manifold, and $G \subset \mathrm{Isom}(X,\langle, \rangle)$ be a closed subgroup.
  For each orbit $G.p \in G \backslash X$, there exists an open neighborhood $V \subset G \backslash X$ of $G.p$ such that
  $G.x < G.p$ for all $G.x \in V$.
\end{prop}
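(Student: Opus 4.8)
The plan is to deduce the statement directly from the existence of slices for the proper isometric action of $G$ on $X$. Since $G$ is a closed subgroup of $\mathrm{Isom}(X,\langle, \rangle)$ and $X$ is complete and connected, the $G$-action is proper and isometric, so each isotropy group is compact. Fix $p \in X$, let $\nu_{p} := (T_{p}(G.p))^{\perp} \subset T_{p}X$ be the normal space to the orbit at $p$, and for small $\varepsilon > 0$ set $S_{p} := \exp_{p}(\{v \in \nu_{p} : |v| < \varepsilon\})$, a normal slice at $p$.

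First I would record the properties of $S_{p}$ supplied by the slice theorem (\textit{e.g.} \cite{MR3362465}) for $\varepsilon$ small enough: the tube $G \cdot S_{p}$ is an open $G$-invariant neighborhood of $G.p$ in $X$, the slice $S_{p}$ is $G_{p}$-invariant, and $\{g \in G : g S_{p} \cap S_{p} \neq \emptyset\} = G_{p}$. The last property yields the key inclusion $G_{x} \subseteq G_{p}$ for every $x \in S_{p}$: if $g$ fixes $x \in S_{p}$, then $x = g.x \in g S_{p} \cap S_{p}$, hence $g \in G_{p}$.

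Next, let $\pi : X \to G \backslash X$ be the quotient map and put $V := \pi(G \cdot S_{p})$. Since $\pi$ is an open map and $G \cdot S_{p}$ is open, $V$ is an open neighborhood of $G.p$ in $G \backslash X$. Given $G.x \in V$, pick $z \in G \cdot S_{p}$ with $\pi(z) = G.x$ and write $z = g.y$ with $y \in S_{p}$; then $G.x = G.z = G.y$ with $y$ in the slice. By the previous step $G_{y} \subseteq G_{p}$, so, taking the representative $p \in G.p$ in the definition of the preorder $<$, we obtain $G.x = G.y < G.p$. This proves the proposition.

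The argument is essentially a restatement of the slice theorem, so I do not expect a serious obstacle once slices are available. The points that need care are, first, invoking the slice theorem in the precise form above — in particular the equality $\{g \in G : gS_{p}\cap S_{p} \neq \emptyset\} = G_{p}$, which is where the smallness of $\varepsilon$ and the properness of the action enter — and, second, keeping straight the direction of the preorder: $G.y < G.p$ means that there is a representative $p' \in G.p$ with $G_{y} \subseteq G_{p'}$, and the slice produces exactly this with $p' = p$.
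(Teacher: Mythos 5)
Your argument is correct and is exactly the route the paper takes: the paper derives this proposition directly from the existence of slices for proper actions (citing \cite{MR3362465}), and your use of the normal slice $S_{p}$ with the property $\{g \in G : gS_{p}\cap S_{p}\neq\emptyset\}=G_{p}$, followed by passing to the open tube $G\cdot S_{p}$ and its image in $G\backslash X$, is the standard way to make that deduction precise. The handling of the preorder is also consistent with the paper's definition, since $G.x<G.p$ only requires the stabilizer inclusion for some representative of the orbit $G.x$, and the slice representative $y$ provides it.
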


For $G.p \in G \backslash X$, denote by $\mathcal{U}(G.p)$ the set of all upper bounds of $G.p$, namely
\begin{equation*}
  \mathcal{U}(G.p) := \{G.q \in G \backslash X \mid G.p < G.q\}.
\end{equation*}
Then one has the following:

\begin{lemm}
  \label{lemm:0915152534}
  Let $(X,\langle, \rangle)$ be a Riemannian manifold whose exponential map $\mathrm{exp}_{p} : T_{p}X \to X$ is a diffeomorphism for all $p \in X$.
  Let $G \subset \mathrm{Isom}(X,\langle,\rangle)$ be  a closed subgroup.
  Then $\mathcal{U}(G.p)$ is a connected subset of $G \backslash X$ for all $G.p \in G\backslash X$.
\end{lemm}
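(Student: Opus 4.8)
The plan is to realise $\mathcal{U}(G.p)$ as the image, under the orbit map $\pi \colon X \to G\backslash X$, of the fixed point set of the isotropy group $G_{p}$, and then to exploit the fact that the ambient space is uniquely geodesic to see that this fixed point set is connected.

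First I would set $H := G_{p}$, which is a subgroup of $\mathrm{Isom}(X,\langle,\rangle)$, and introduce its fixed point set $X^{H} := \{x \in X \mid h.x = x \text{ for all } h \in H\}$. The observation here is that, for a point $q^{\prime} \in X$, the condition $G_{p} \subset G_{q^{\prime}}$ says exactly that every element of $H$ fixes $q^{\prime}$, i.e. $q^{\prime} \in X^{H}$. Hence $G.q \in \mathcal{U}(G.p)$ — that is, $G.p < G.q$, meaning there is $q^{\prime} \in G.q$ with $G_{p} \subset G_{q^{\prime}}$ — holds if and only if $G.q \cap X^{H} \neq \emptyset$, which is exactly the condition $G.q \in \pi(X^{H})$. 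Thus $\mathcal{U}(G.p) = \pi(X^{H})$ as subsets of $G\backslash X$; note it is nonempty since $p \in X^{H}$.

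Next I would show $X^{H}$ is connected. Since $\exp_{x}$ is a diffeomorphism for every $x \in X$, any two points $x,y \in X$ are joined by a unique geodesic segment $\gamma \colon [0,1] \to X$ (namely $\gamma(t) = \exp_{x}(t v)$ with $v = \exp_{x}^{-1}(y)$; uniqueness follows from injectivity of $\exp_{x}$ on all of $T_{x}X$). Now suppose $x, y \in X^{H}$. For each $h \in H$, the curve $h \circ \gamma$ is again a geodesic running from $h.x = x$ to $h.y = y$, so by uniqueness $h \circ \gamma = \gamma$; therefore $\gamma(t) \in X^{H}$ for every $t$. Hence $X^{H}$ is path-connected, in particular connected, and $\pi$ being continuous, $\mathcal{U}(G.p) = \pi(X^{H})$ is the continuous image of a connected set, hence connected.

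The only mildly delicate point is the bookkeeping in the first step, translating the preorder $<$ on orbits into the statement that $G.q$ meets the fixed point set $X^{H}$; the geometric core — that fixed point sets of groups of isometries in uniquely geodesic manifolds are connected (indeed totally geodesic) — is then immediate. I would also remark that no separate completeness or properness hypothesis is needed, because $\exp_{x}$ being a global diffeomorphism already forces $X$ to be geodesically complete and uniquely geodesic.
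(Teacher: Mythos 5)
Your proof is correct and rests on exactly the same geometric core as the paper's: the unique geodesic joining two points fixed by $G_{p}$ must itself be fixed pointwise, because isometries send geodesics to geodesics and the global diffeomorphism hypothesis on $\exp$ forces uniqueness. The only (cosmetic) difference is packaging — you establish connectedness of the fixed point set $X^{G_{p}}$ upstairs and push it down via $\pi$, whereas the paper directly produces the path $\pi\circ c$ in $G\backslash X$ from $G.p$ to an arbitrary $G.q\in\mathcal{U}(G.p)$.
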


\begin{proof}
  \label{proof:0915233019}
  We prove that $\mathcal{U}(G.p)$ is path-connected.
  Take any $G.q \in G \backslash X$ with $G.p < G.q$.
  We construct a path from $G.p$ to $G.q$.
  That is,
  we show that
  there exists a continuous curve $\gamma : [0,T] \to G \backslash X$ such that
  \begin{equation}
    \label{eq:0915233702}
    \gamma(0) = G.p, \quad \gamma(T) = G.q, \quad  G.p = \gamma(0) < \gamma(t) \quad (t \in [0,T]).
  \end{equation}

  By the definition of $<$, there exists $q^{\prime} \in G.q$ such that $G_{p} \subset G_{q^{\prime}}$.
  Take any geodesic $c : [0,T] \to X$ with $c(0) = p$, $c(T) = q^{\prime}$.
  Since the exponential map $\mathrm{exp}_{p} : T_{p}X \to X$ is a diffeomorphism,  
  such a geodesic $c$ is unique.
  Note that $G_{c(0)} = G_{p} \subset G_{q^{\prime}} = G_{c(T)}$.
  That is, the action of $G_{p}$ on $X$ fixes the starting point $c(0)$ and the end point $c(T)$ of $c$.
  We also note that the $G_{p}$-action sends each geodesic to geodesic.
  Hence, by the uniqueness of the geodesic $c$, the $G_{p}$-action must fix the geodesic $c$ pointwisely.
  Therefore, one has  $G_{p} \subset  G_{c(t)}$, namely $G.p < G.c(t)$ for all $t \in [0,T]$.
   Let $\pi : X \to G \backslash X$ be the projection, and define $\gamma := \pi \circ c$.
   Then one can see that $\gamma$ satisfies (\ref{eq:0915233702}).
\end{proof}

By the above preparation, we prove the main subject of this section as follows:

\begin{prop}
  \label{prop:0909223530}
  Let $(X,\langle, \rangle)$ be a Riemannian manifold whose exponential map $\mathrm{exp}_{p} : T_{p}X \to X$ is a diffeomorphism for all $p \in X$.
  Let $G \subset \mathrm{Isom}(X,\langle,\rangle)$ be a closed subgroup.
  If an orbit $G.p$ is isolated, then $G.p$ has a maximal orbit type.
  In particular, three notions in (\ref{eq:0910002409}) are equivalent with each other.
\end{prop}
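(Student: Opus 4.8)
The plan is to combine the three preparatory facts just established: the local structure of proper isometric actions (Proposition~\ref{prop:0910000918}), the connectedness of the set of upper bounds $\mathcal{U}(G.p)$ on a Hadamard-type manifold (Lemma~\ref{lemm:0915152534}), and the antisymmetry of $<$ on the double coset space for proper actions (Lemma~\ref{lemm:0910010435}). The overall strategy is: assume $G.p$ is isolated and show that $\mathcal{U}(G.p)$ collapses, as a subset of the double coset space, to the single class $[G.p]$; this is exactly the statement that $G.p$ has maximal orbit type. Then the chain of implications (\ref{eq:0910002409}) becomes a chain of equivalences.

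First I would unwind what ``isolated'' gives us. By definition there is an open $U \subset G\backslash X$ with $U \cap [G.p] = \{G.p\}$. Combining this with Proposition~\ref{prop:0910000918}, after shrinking we may assume $U$ is an open neighborhood of $G.p$ with $G.x < G.p$ for every $G.x \in U$; in particular every orbit in $U$ that is an upper bound of $G.p$ is actually $\sim$-equivalent to $G.p$, and by $U \cap [G.p] = \{G.p\}$ such an orbit must literally equal $G.p$. So $U \cap \mathcal{U}(G.p) = \{G.p\}$: the point $G.p$ is an isolated point of $\mathcal{U}(G.p)$.

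Now I would invoke Lemma~\ref{lemm:0915152534}: since the exponential map is a diffeomorphism at every point, $\mathcal{U}(G.p)$ is a connected subset of $G\backslash X$. A connected space with an isolated point is a singleton, so $\mathcal{U}(G.p) = \{G.p\}$. Unpacking the definition of $\mathcal{U}$, this says precisely that $G.p < G.q$ implies $G.q = G.p$, which is the definition of $G.p$ having maximal orbit type. Finally, since we already have the implications ``maximal orbit type $\Rightarrow$ solitary $\Rightarrow$ isolated'' from (\ref{eq:0910002409}), and we have just shown ``isolated $\Rightarrow$ maximal orbit type'', all three notions coincide.

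I expect the only genuinely delicate point to be the first step — verifying that one may simultaneously arrange $U$ to be both a witness of isolation and a neighborhood on which all orbit types lie below $G.p$, so that $U \cap \mathcal{U}(G.p) = \{G.p\}$ rather than merely $U \cap [G.p] = \{G.p\}$. This is where the slice theorem (through Proposition~\ref{prop:0910000918}) is doing the real work: without it, an orbit near $G.p$ could a priori be an incomparable upper bound, and isolation within $[G.p]$ alone would not suffice. Everything after that is a short topological argument (connected set $+$ isolated point $=$ singleton) plus bookkeeping with the definitions.
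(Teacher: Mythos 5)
Your argument is correct and follows essentially the same route as the paper: intersect the isolation neighborhood $U$ with the neighborhood $V$ from Proposition~\ref{prop:0910000918}, use the antisymmetry of $<$ on the double coset space (Lemma~\ref{lemm:0910010435}) to get $(U\cap V)\cap \mathcal{U}(G.p)=\{G.p\}$, and then use the connectedness of $\mathcal{U}(G.p)$ from Lemma~\ref{lemm:0915152534} to collapse it to a point. The one detail worth making explicit is that ``a connected space with an isolated point is a singleton'' needs singletons to be closed (it fails for, say, the Sierpi\'nski space); this holds here because the orbit space of a proper action is Hausdorff, which is exactly why the paper phrases the final step as showing $\{G.p\}$ is clopen in $\mathcal{U}(G.p)$.
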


\begin{proof}
  \label{proof:0916001815}
  Assume that an orbit $G.p$ is isolated.
  We have to show $G.p$ has maximal orbit type, namely,  $\mathcal{U}(G.p) = \{G.p\}$.
  By Lemma~\ref{lemm:0915152534}, one knows $\mathcal{U}(G.p)$ is connected.
  Therefore, in order to show $\mathcal{U}(G.p) = \{G.p\}$, we have only to show that $\{G.p\}$ is a clopen subset of $\mathcal{U}(G.p)$.

  We show that $\{G.p\}$ is a closed subset of $\mathcal{U}(G.p)$.
  The orbit space $G \backslash X$ is Hausdorff since $G$-action is proper.
  Therefore, $\{G.p\} \subset G \backslash X$ is closed, and hence is closed in $\mathcal{U}(G.p)$.

  We now verify that $\{G.p\}$ is an open subset of $\mathcal{U}(G.p)$.
  To show this, we construct an open subset $W$ of $G \backslash X$ such that $W \cap \mathcal{U}(G.p) = \{G.p\}$.

  We firstly construct $W$.
  Since $G.p$ is isolated, there exists an open subset $U$ such that $U \cap [G.p] = \{G.p\}$.
  Also, by Proposition~\ref{prop:0910000918}, there exists an open neighborhood $V$ of $G.p$ such that
  $G.x < G.p$ for all $G.x \in V$.
  Denote by $W := U \cap V$.

  We show that $W \cap \mathcal{U}(G.p) = \{G.p\}$.
  $W \cap \mathcal{U}(G.p) \supset \{G.p\}$ is obvious.
  We have only to verify that $W \cap \mathcal{U}(G.p) \subset U \cap [G.p]$
  since $U \cap [G.p] = \{G.p\}$.  
  Take any $G.q \in W \cap \mathcal{U}(G.p)$.
  Since $W \subset U$, one has $G.q \in U$.
  We lastly show that $G.q \in [G.p]$.
  Since $G.q \in V$, one has $G.q < G.p$.
  This yields that $[G.q] < [G.q]$.
  On the other hand, since $G.q \in \mathcal{U}(G.p)$, one has $G.p < G.q$. Hence one has $[G.p] < [G.q]$.
  Therefore, by Lemma~\ref{lemm:0910010435}, we have $[G.p] = [G.q]$, and hence $G.q \in [G.p]$.  
\end{proof}


\section{Some sufficient conditions for maximal left-invariant metrics}

In this section,
we prove Theorem~\ref{theo:1210142830} which  gives a simple sufficient condition
for left-invariant metrics on a simply connected Lie group to be maximal.

\subsection{An introduction to $\R_{>0}\mathrm{Aut}(\mathfrak{g})$-actions}

Firstly, we give a review on the study of moduli spaces of left-invariant metrics.
Let $G$ be a simply connected Lie group with the Lie algebra $\mathfrak{g}$.
Note that a left-invariant metric on $G$ is naturally identified with the inner product on $\mathfrak{g}$.
Let $\mathfrak{m}(\mathfrak{g})$ be the set of all inner products on $\mathfrak{g}$.
Then the general linear group $\mathrm{GL}(\mathfrak{g})$ acts transitively on $\mathfrak{m}(\mathfrak{g})$ by
\begin{equation*}
  g.\langle \cdot, \cdot \rangle :=
  \langle g^{-1} \cdot, g^{-1} \cdot \rangle \quad (g \in \mathrm{GL}(\mathfrak{g}), \ \langle , \rangle \in \mathfrak{M}(\mathfrak{g})).
\end{equation*}
Denote by $\R_{>0}\mathrm{Aut}(\mathfrak{g}) := \{c \varphi \in \mathrm{GL}(\mathfrak{g}) \mid c >0, \ \varphi \in \mathrm{Aut}(\mathfrak{g})\}$.
Note that the subgroup $\R_{>0}\mathrm{Aut}(\mathfrak{g})$ is closed in $\mathrm{GL}(\mathfrak{g})$, and  acts on $\mathfrak{m}(\mathfrak{g})$.

Since $G$ is simply connected, the differential map $\mathrm{Aut}(G) \to \mathrm{Aut}(\mathfrak{g})$ is bijective.
This yields the following:
\begin{lemm}
  \label{lemm:0910132712}
  Let $G$ be a simply connected Lie group with the Lie algebra $\mathfrak{g}$.
  If two inner products $\langle , \rangle, \langle , \rangle^{\prime} \in \mathfrak{m}(\mathfrak{g})$ belong to the same $\R_{>0}\mathrm{Aut}(\mathfrak{g})$-orbit then
two left-invariant metrics $\langle , \rangle, \langle , \rangle^{\prime} \in \mathfrak{M}(G)$ are isometric up to scaling.
\end{lemm}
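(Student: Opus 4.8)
The plan is to integrate the Lie algebra automorphism to a Lie group automorphism of $G$ and check by hand that it realizes the two left-invariant metrics as isometric up to a positive scaling. First I would invoke the fact recorded just above the statement: since $G$ is simply connected, the differential map $\mathrm{Aut}(G) \to \mathrm{Aut}(\mathfrak{g})$, $\Phi \mapsto d\Phi_{e}$, is a bijection. Writing the hypothesis as $\langle , \rangle^{\prime} = (c\varphi).\langle , \rangle$ with $c > 0$ and $\varphi \in \mathrm{Aut}(\mathfrak{g})$, there is then a unique $\Phi \in \mathrm{Aut}(G)$ with $d\Phi_{e} = \varphi$.

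Next I would record the elementary compatibility of a group automorphism with left translations, $\Phi \circ L_{a} = L_{\Phi(a)} \circ \Phi$ for all $a \in G$. If $g$ denotes the left-invariant metric on $G$ with $g_{e} = \langle , \rangle$, this identity shows that for any $\Psi \in \mathrm{Aut}(G)$ the pullback $\Psi^{\ast}g$ is again left-invariant, and its value at the identity is $(\Psi^{\ast}g)_{e}(x,y) = g_{e}(d\Psi_{e}x, d\Psi_{e}y)$. In other words, $\Psi^{\ast}g$ is exactly the left-invariant metric corresponding to the inner product $\langle d\Psi_{e}\cdot , d\Psi_{e}\cdot \rangle$ on $\mathfrak{g}$.

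Then I would unwind the definition of the $\R_{>0}\mathrm{Aut}(\mathfrak{g})$-action: $(c\varphi).\langle , \rangle = \langle (c\varphi)^{-1}\cdot , (c\varphi)^{-1}\cdot \rangle = c^{-2}\langle \varphi^{-1}\cdot , \varphi^{-1}\cdot \rangle$. Applying the previous paragraph to $\Psi = \Phi^{-1}$ (so $d\Psi_{e} = \varphi^{-1}$), the left-invariant metric $g^{\prime}$ with $g^{\prime}_{e} = \langle , \rangle^{\prime}$ satisfies $c^{2}g^{\prime} = (\Phi^{-1})^{\ast}g$. Since $(\Phi^{-1})^{\ast}g$ is by definition the metric making $\Phi^{-1}\colon (G,(\Phi^{-1})^{\ast}g) \to (G,g)$ an isometry, equivalently $\Phi\colon (G,g) \to (G,c^{2}g^{\prime})$ is an isometry. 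Hence $(G,\langle , \rangle)$ is isometric to $(G, c^{2}\langle , \rangle^{\prime})$, i.e. the two left-invariant metrics are isometric up to scaling (with scaling factor $\lambda = c^{2} > 0$), which is the assertion.

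I expect no genuine obstacle here. The only point requiring care is bookkeeping of conventions: the inverse appearing in the $\mathrm{GL}(\mathfrak{g})$-action on inner products, and the fact that the scalar $c \in \R_{>0} \subset \mathrm{GL}(\mathfrak{g})$ rescales an inner product, hence the associated left-invariant metric, by $c^{-2}$ rather than by $c$ — so one must track whether the final scaling factor is $c$, $c^{2}$ or $c^{-2}$. Everything else is the standard dictionary between left-invariant metrics on a simply connected $G$ and $\mathrm{Aut}(\mathfrak{g})$-orbits of inner products on $\mathfrak{g}$.
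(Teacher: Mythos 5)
Your argument is correct and is exactly the route the paper intends: it deduces the lemma directly from the bijectivity of the differential map $\mathrm{Aut}(G) \to \mathrm{Aut}(\mathfrak{g})$ for simply connected $G$, leaving the pullback computation and the bookkeeping of the scaling factor (which you carry out correctly, with $\lambda = c^{2}$) implicit. No discrepancies.
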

Note that the converse is not true in general, but is true if $\mathfrak{g}$ is unimodular completely solvable case for instance.
The orbit space $\R_{>0}\mathrm{Aut}(\mathfrak{g}) \backslash \mathfrak{M}(\mathfrak{g})$ helps us to classify
left-invariant metrics on $G$, and some geometers have been studying the $\R_{>0}\mathrm{Aut}(\mathfrak{g})$-action (\textit{e.g.} \cite{MR2783396, MR3431025}).

Denote by $O(\langle, \rangle) \subset \mathrm{GL}(\mathfrak{g})$ the orthogonal group with respect to $\langle, \rangle \in \mathfrak{m}(\mathfrak{g})$.
Now we investigate the stabilizer $\R_{>0}\mathrm{Aut}(\mathfrak{g}) \cap O(\langle , \rangle)$.
One can easily see that $\mathrm{Aut}(\mathfrak{g}) \cap O(\langle,\rangle) \subset \R_{>0}\mathrm{Aut}(\mathfrak{g}) \cap O(\langle , \rangle)$.
In fact, the equality holds:

\begin{lemm}
  \label{lemm:0909002910}
    Let $\mathfrak{g}$ be a Lie algebra, and $\langle , \rangle$ be an inner product on $\mathfrak{g}$.
  Then one has $\R_{>0}\mathrm{Aut}(\mathfrak{g}) \cap O(\langle , \rangle) = \mathrm{Aut}(\mathfrak{g}) \cap O(\langle , \rangle)$.
  In other words,
  the isotropy subgroup of the $\R_{>0}\mathrm{Aut}(\mathfrak{g})$-action at $\langle , \rangle \in \mathfrak{m}(\mathfrak{g})$
  coincides with $\mathrm{Aut}(\mathfrak{g}) \cap O(\langle , \rangle)$.
\end{lemm}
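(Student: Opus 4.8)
The plan is to show the only nontrivial inclusion $\R_{>0}\mathrm{Aut}(\mathfrak{g}) \cap O(\langle , \rangle) \subset \mathrm{Aut}(\mathfrak{g}) \cap O(\langle , \rangle)$; the reverse inclusion is immediate since $\mathrm{Aut}(\mathfrak{g}) = 1 \cdot \mathrm{Aut}(\mathfrak{g}) \subset \R_{>0}\mathrm{Aut}(\mathfrak{g})$. So take an element of the left-hand side and write it as $c\varphi$ with $c > 0$ and $\varphi \in \mathrm{Aut}(\mathfrak{g})$, and assume $c\varphi \in O(\langle , \rangle)$. The goal is to force $c = 1$, since then $c\varphi = \varphi \in \mathrm{Aut}(\mathfrak{g}) \cap O(\langle,\rangle)$.

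First I would exploit that the dilation factor can be read off from a conformal invariant. Since $c\varphi$ preserves $\langle , \rangle$, it in particular preserves the volume form of $\langle , \rangle$ up to sign, so $|\det(c\varphi)| = 1$, i.e. $c^{n}|\det\varphi| = 1$ where $n = \dim\mathfrak{g}$. Independently, $\varphi \in \mathrm{Aut}(\mathfrak{g})$ commutes with the bracket, and one checks that every Lie algebra automorphism preserves the trace form $B(x,y) = \mathrm{tr}(\mathrm{ad}(x)\,\mathrm{ad}(y))$; hence $\varphi$ lies in the "orthogonal-type" group of the (possibly degenerate) symmetric bilinear form $B$, which forces $\det\varphi = \pm 1$, and in fact this is cleanest to see directly: $\mathrm{Aut}(\mathfrak{g})$ normalizes $\mathrm{ad}(\mathfrak{g})$ via $\varphi\,\mathrm{ad}(x)\,\varphi^{-1} = \mathrm{ad}(\varphi x)$, and taking determinants of the restriction of $\mathrm{ad}$-action machinery is the standard argument that $|\det\varphi| = 1$ for automorphisms. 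Combining $c^{n}|\det\varphi| = 1$ with $|\det\varphi| = 1$ gives $c^{n} = 1$, hence $c = 1$ as $c > 0$.

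The subtle point, and the one I would be most careful about, is the claim $|\det\varphi| = 1$ for all $\varphi \in \mathrm{Aut}(\mathfrak{g})$: this is false for a general Lie algebra — e.g. on the abelian Lie algebra $\R^{n}$ every element of $\mathrm{GL}(n,\R)$ is an automorphism — so the argument via the Killing form alone does not suffice when $B$ is degenerate. The honest and uniform way to see $c = 1$ is instead the following: an element $c\varphi \in O(\langle,\rangle)$ is an isometry of $(\mathfrak{g},\langle,\rangle)$, while $\varphi$, being linear, scales distances by exactly the factor it scales $\langle,\rangle$ by; more precisely, from $\langle c\varphi(x), c\varphi(y)\rangle = \langle x, y\rangle$ we get $\langle \varphi(x),\varphi(y)\rangle = c^{-2}\langle x, y\rangle$, so $\varphi$ is a homothety of ratio $c^{-1}$. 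Thus $\psi := c^{-1}\varphi^{-1}$ is again of the form (positive scalar)$\times$(automorphism) — note $\varphi^{-1} \in \mathrm{Aut}(\mathfrak{g})$ — and one checks $\langle \psi(x),\psi(y)\rangle = \langle x, y\rangle$ as well, i.e. $\psi \in \R_{>0}\mathrm{Aut}(\mathfrak{g}) \cap O(\langle,\rangle)$ with dilation factor $c^{-1}$. Since the dilation factor is multiplicative, the set of achievable factors is a subgroup of $\R_{>0}$; the point is to show it is trivial. This follows because $\mathrm{Aut}(\mathfrak{g})$ is an algebraic subgroup of $\mathrm{GL}(\mathfrak{g})$ and hence has only finitely many connected components after quotienting by nothing relevant here — more simply: the function $c\varphi \mapsto c$ on $\R_{>0}\mathrm{Aut}(\mathfrak{g})$ is a continuous homomorphism, its restriction to the compact group $\R_{>0}\mathrm{Aut}(\mathfrak{g}) \cap O(\langle,\rangle)$ has compact image, and the only compact subgroup of $\R_{>0}$ is $\{1\}$.

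So the cleanest route, which is the one I would write up, is: (i) reduce to showing that if $c\varphi \in O(\langle,\rangle)$ with $c>0$, $\varphi\in\mathrm{Aut}(\mathfrak{g})$, then $c=1$; (ii) observe that the stabilizer $\R_{>0}\mathrm{Aut}(\mathfrak{g})\cap O(\langle,\rangle)$ is a closed subgroup of the compact orthogonal group $O(\langle,\rangle)$, hence compact; (iii) the projection $c\varphi \mapsto c$ is a continuous homomorphism from this compact group to $\R_{>0}$, whose image is therefore a compact subgroup of $\R_{>0}$, i.e. trivial; hence $c = 1$ always, giving the desired inclusion. The main obstacle is purely expository — resisting the tempting but incorrect shortcut through the Killing form — and the compactness argument sidesteps it entirely.
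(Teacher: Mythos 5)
Your argument is correct in substance but follows a genuinely different route from the paper's. You reduce everything to showing that the ``dilation factor'' $c$ of an element $c\varphi \in \R_{>0}\mathrm{Aut}(\mathfrak{g}) \cap O(\langle,\rangle)$ must be $1$, and you get this softly: the stabilizer is a closed subgroup of the compact group $O(\langle,\rangle)$, the character $c\varphi \mapsto c$ is a continuous homomorphism into $\R_{>0}$, and $\R_{>0}$ has no nontrivial compact subgroups. The paper instead argues explicitly: it lets $\mathrm{GL}(\mathfrak{g})$ act on Lie brackets, notes that $g = c\varphi$ sends $[\,,]$ to $c^{-1}[\,,]$ while any $g \in O(\langle,\rangle)$ preserves the norm $\|[\,,]\|_{\langle,\rangle}$, and concludes $c^{-1}\|[\,,]\| = \|[\,,]\|$ in one line. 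Your version buys generality and avoids any computation, but it carries two small debts that the paper's computation does not. First, the character $c\varphi \mapsto c$ is well defined only when $\mathfrak{g}$ is non-abelian: in the abelian case $\R_{>0}\cdot\mathrm{id} \subset \mathrm{Aut}(\mathfrak{g}) = \mathrm{GL}(\mathfrak{g})$ and the decomposition $g = c\varphi$ is far from unique (you noticed the abelian case when rejecting the determinant shortcut, but did not flag that it also threatens your homomorphism; the lemma is of course trivial there, so a one-sentence case split fixes it). Second, continuity of the character deserves a word; the cleanest witness is the explicit formula $c = \|[\,,]\|_{\langle,\rangle}\big/\|g.[\,,]\|_{\langle,\rangle}$, which is exactly the device the paper uses — at which point the compactness argument becomes unnecessary, since orthogonality of $g$ already forces this ratio to be $1$. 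You were right to reject the Killing-form/determinant shortcut; with the two caveats above patched, your proof stands as a valid, slightly softer alternative.
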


\begin{proof}
  \label{proof:0909003051}
  We have to show
  $\R_{>0}\mathrm{Aut}(\mathfrak{g}) \cap O(\langle , \rangle) \subset \mathrm{Aut}(\mathfrak{g}) \cap O(\langle , \rangle)$ only.
  To prove this, let us prepare some notions.

  We firstly define the action of $\mathrm{GL}(V)$ on the set of Lie brackets on $V$ by
  \begin{equation*}
    g.[,] := g[g^{-1},g^{-1}] \quad (g \in \mathrm{GL}(V), \  [,] : \mbox{Lie bracket}).
  \end{equation*}

  Secondly, for a metric Lie algebra $(V,[,],\langle , \rangle)$, denote by $\|[,]\|_{\langle , \rangle}$ the norm of the Lie bracket $[,]$, namely,
  \begin{equation}
    \label{eq:0912021357}
    \| [,] \|_{\langle , \rangle} := (\sum_{i,j} \langle [v_{i},v_{j}], [v_{i},v_{j}] \rangle)^{1/2},
  \end{equation}
  where $\{v_{1}, \ldots , v_{n}\} \subset V$ is an orthonormal basis with respect to $\langle , \rangle$.  

  We are now in the position to show the assertion.
  For a Lie algebra $\mathfrak{g} = (V,[,])$ with an inner product $\langle, \rangle$, one can see that
  \begin{align}
    \label{align:0909005025}
\begin{split}
    \mathrm{Aut}(\mathfrak{g}) &= \{g \in \mathrm{GL}(V) \mid g.[,] = [,]  \}, \\
    \R_{>0}\mathrm{Aut}(\mathfrak{g}) &= \{g \in \mathrm{GL}(V) \mid \| [,] \|_{\langle , \rangle} \cdot g.[,] = \| g.[,] \|_{\langle , \rangle} \cdot [,]  \}.    
\end{split}
  \end{align}  
 On the other hand, one can see that $\| g.[,] \|_{\langle , \rangle} = \| [,] \|_{\langle , \rangle}$ for all $g \in \mathrm{O}(\langle , \rangle)$.
 These conclude that $\R_{>0}\mathrm{Aut}(\mathfrak{g}) \cap O(\langle , \rangle) \subset \mathrm{Aut}(\mathfrak{g}) \cap O(\langle , \rangle)$.
\end{proof}

Denote by $\mathrm{sym}(\mathfrak{g})$ the vector space  of all symmetric bilinear forms on $\mathfrak{g}$.
Then $\mathfrak{m}(\mathfrak{g})$ is an open subset of $\mathrm{sym}(\mathfrak{g})$, and hence
each tangent space $T_{\langle , \rangle}\mathfrak{m}(\mathfrak{g})$ at $\langle , \rangle \in \mathfrak{m}(\mathfrak{g})$ is naturally identified with
$\mathrm{sym}(\mathfrak{g})$.
A natural inner product $(,)_{\langle , \rangle}$ on $T_{\langle , \rangle}\mathfrak{m}(\mathfrak{g}) \cong \mathrm{sym}(\mathfrak{g})$ is defined as follows:
\begin{equation*}
  (\theta, \eta)_{\langle , \rangle} := \sum_{i}\theta(v_{i},v_{j})\eta(v_{i},v_{j})
  \quad (\theta, \eta \in \mathrm{sym}(\mathfrak{g})),
\end{equation*}
where $\{v_{1}, \ldots , v_{n}\}$ is an orthonormal basis with respect to $\langle , \rangle$.
Then $(\mathfrak{m}(\mathfrak{g}),(,))$ is a $\mathrm{GL}(\mathfrak{g})$-homogeneous Riemannian manifold.
Note that $\mathfrak{m}(\mathfrak{g}) \cong \R_{>0} \times \mathrm{SL}(n)/\mathrm{SO}(n)$ by the de Rham decomposition,
and hence $\mathfrak{m}(\mathfrak{g})$ is an Hadamard manifold.

\begin{lemm}
  \label{lemm:0910060429}
$\R_{>0}\mathrm{Aut}(\mathfrak{g})$ is a closed subgroup of 
the isometry group of $(\mathfrak{m}(\mathfrak{g}),(,))$.
In particular, the $\R_{>0}\mathrm{Aut}(\mathfrak{g})$-action is a proper isometric action on the Hadamard manifold $\mathfrak{m}(\mathfrak{g})$.
\end{lemm}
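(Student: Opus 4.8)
The plan is to establish three facts in turn: that $\R_{>0}\mathrm{Aut}(\mathfrak{g})$ acts on $(\mathfrak{m}(\mathfrak{g}),(,))$ by isometries; that its image in $I:=\mathrm{Isom}(\mathfrak{m}(\mathfrak{g}),(,))$ is closed; and that the action is proper. The last two yield the ``in particular'' immediately, since $\mathfrak{m}(\mathfrak{g})$ has already been seen to be a Hadamard manifold through the de Rham splitting $\mathfrak{m}(\mathfrak{g})\cong\R_{>0}\times\mathrm{SL}(n)/\mathrm{SO}(n)$.

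For the first fact I would prove the stronger statement that the whole group $\mathrm{GL}(\mathfrak{g})$ preserves $(,)$. The action of $g\in\mathrm{GL}(\mathfrak{g})$ on $\mathfrak{m}(\mathfrak{g})$ is the restriction to the open subset $\mathfrak{m}(\mathfrak{g})\subset\mathrm{sym}(\mathfrak{g})$ of the linear map $\sigma\mapsto\sigma(g^{-1}\,\cdot\,,g^{-1}\,\cdot\,)$, so under the identification $T_{\langle,\rangle}\mathfrak{m}(\mathfrak{g})\cong\mathrm{sym}(\mathfrak{g})$ its differential at any $\langle,\rangle$ is again $\theta\mapsto\theta(g^{-1}\,\cdot\,,g^{-1}\,\cdot\,)$. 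If $\{v_{1},\dots,v_{n}\}$ is $\langle,\rangle$-orthonormal then $\{gv_{1},\dots,gv_{n}\}$ is $g.\langle,\rangle$-orthonormal, so evaluating the defining sum of $(,)_{g.\langle,\rangle}$ on the latter basis and cancelling $g^{-1}g$ gives $(dg\cdot\theta,dg\cdot\eta)_{g.\langle,\rangle}=(\theta,\eta)_{\langle,\rangle}$. Hence $\R_{>0}\mathrm{Aut}(\mathfrak{g})\subset\mathrm{GL}(\mathfrak{g})$ acts by isometries, and the action gives a continuous homomorphism $\Phi\colon\mathrm{GL}(\mathfrak{g})\to I$, continuity coming from that of the action map $\mathrm{GL}(\mathfrak{g})\times\mathfrak{m}(\mathfrak{g})\to\mathfrak{m}(\mathfrak{g})$.

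The substantive fact is that $\Phi(\R_{>0}\mathrm{Aut}(\mathfrak{g}))$ is closed in $I$, and for this the key input is that $\R_{>0}\mathrm{Aut}(\mathfrak{g})$ is already closed in $\mathrm{GL}(\mathfrak{g})$. Let $\psi_{k}\in\R_{>0}\mathrm{Aut}(\mathfrak{g})$ with $\Phi(\psi_{k})\to\phi$ in $I$. Since convergence in $I$ with the compact--open topology implies pointwise convergence, $\psi_{k}.\langle,\rangle_{0}=\Phi(\psi_{k})(\langle,\rangle_{0})\to\phi(\langle,\rangle_{0})=:q$ in $\mathfrak{m}(\mathfrak{g})$ for a fixed base point $\langle,\rangle_{0}$, with $q$ a positive-definite inner product. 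Choose $0<c_{1}\le c_{2}$ with $c_{1}\langle u,u\rangle_{0}\le q(u,u)\le c_{2}\langle u,u\rangle_{0}$ for all $u$; then uniform convergence of $\psi_{k}.\langle,\rangle_{0}$ to $q$ forces, once $k$ is large, $\tfrac{c_{1}}{2}\langle u,u\rangle_{0}\le(\psi_{k}.\langle,\rangle_{0})(u,u)=\langle\psi_{k}^{-1}u,\psi_{k}^{-1}u\rangle_{0}\le 2c_{2}\langle u,u\rangle_{0}$ for all $u$, so $\psi_{k}$ and $\psi_{k}^{-1}$ have operator norm with respect to $\langle,\rangle_{0}$ bounded uniformly in (large) $k$. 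Thus $\{\psi_{k}\}$ eventually lies in a compact subset of $\mathrm{GL}(\mathfrak{g})$, and since $\R_{>0}\mathrm{Aut}(\mathfrak{g})$ is closed there, a subsequence converges in $\R_{>0}\mathrm{Aut}(\mathfrak{g})$ to some $\psi_{\infty}$; continuity of $\Phi$ and the Hausdorff property of $I$ then force $\phi=\Phi(\psi_{\infty})\in\Phi(\R_{>0}\mathrm{Aut}(\mathfrak{g}))$. I expect this ``no escape to infinity'' estimate --- ruling out that a sequence in $\R_{>0}\mathrm{Aut}(\mathfrak{g})$ convergent in $I$ leaves every compact subset of $\mathrm{GL}(\mathfrak{g})$, which is exactly what positive-definiteness of the limit $q$ provides --- to be the only real obstacle; the rest is soft topology.

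For properness I would argue directly rather than through $I$. The orbit map $g\mapsto g.\langle,\rangle_{0}$ identifies $\mathfrak{m}(\mathfrak{g})$ with $\mathrm{GL}(\mathfrak{g})/O(\langle,\rangle_{0})$, where the isotropy subgroup $O(\langle,\rangle_{0})$ is compact, so the $\mathrm{GL}(\mathfrak{g})$-action on $\mathfrak{m}(\mathfrak{g})$ is proper; restricting the proper map $\mathrm{GL}(\mathfrak{g})\times\mathfrak{m}(\mathfrak{g})\to\mathfrak{m}(\mathfrak{g})\times\mathfrak{m}(\mathfrak{g})$, $(g,x)\mapsto(g.x,x)$, to the closed subset $\R_{>0}\mathrm{Aut}(\mathfrak{g})\times\mathfrak{m}(\mathfrak{g})$ keeps it proper, so the $\R_{>0}\mathrm{Aut}(\mathfrak{g})$-action is proper as well. (Alternatively, once closedness in $I$ is established, properness is automatic, as $I$ itself acts properly on the Riemannian manifold $\mathfrak{m}(\mathfrak{g})$.) Together with the Hadamard property noted above, this completes the argument.
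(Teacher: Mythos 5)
Your proof is correct, but it reaches the key conclusion --- that (the image of) $\R_{>0}\mathrm{Aut}(\mathfrak{g})$ is closed in $\mathrm{Isom}(\mathfrak{m}(\mathfrak{g}),(,))$ --- by a genuinely different route from the paper. The paper first identifies the identity component of $\mathrm{Isom}(\mathfrak{m}(\mathfrak{g}),(,))$ with $\mathrm{GL}_{+}(\mathfrak{g}) \cong \R_{>0}\times\mathrm{SL}(\mathfrak{g})$ via the de Rham decomposition; since a subgroup containing an open subgroup is open, hence closed, $\mathrm{GL}(\mathfrak{g})$ is closed in the isometry group, and the closedness of $\R_{>0}\mathrm{Aut}(\mathfrak{g})$ in $\mathrm{GL}(\mathfrak{g})$ (from the expression (\ref{align:0909005025})) finishes the argument in two lines. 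You instead avoid computing the isometry group altogether and run a ``no escape to infinity'' argument: positive-definiteness of the limit inner product bounds $\|\psi_{k}\|$ and $\|\psi_{k}^{-1}\|$, so a sequence convergent in the isometry group stays in a compact subset of $\mathrm{GL}(\mathfrak{g})$ and its limit is caught by closedness in $\mathrm{GL}(\mathfrak{g})$. Your version is more elementary and self-contained (it also spells out the isometry property of the $\mathrm{GL}(\mathfrak{g})$-action and the properness via $\mathfrak{m}(\mathfrak{g})\cong\mathrm{GL}(\mathfrak{g})/O(\langle,\rangle_{0})$ with compact isotropy, both of which the paper leaves implicit or delegates to Section~3), at the cost of length; the paper's version is shorter but leans on knowing the full isometry group of the symmetric space $\R_{>0}\times\mathrm{SL}(n)/\mathrm{SO}(n)$. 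Both arguments correctly rely on the same essential input, namely that $\R_{>0}\mathrm{Aut}(\mathfrak{g})$ is closed in $\mathrm{GL}(\mathfrak{g})$.
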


\begin{proof}
  \label{proof:0910060528}
  The de Rham decomposition of $\mathfrak{m}(\mathfrak{g}) \cong \R_{>0} \times \mathrm{SL}(n)/\mathrm{SO}(n)$ implies that
  the identity component of $\mathrm{Isom}(\mathfrak{m}(\mathfrak{g}),(,))$
  is $\mathrm{GL}_{+}(\mathfrak{g}) \cong \R_{>0} \times \mathrm{SL}(\mathfrak{g})$.
  This yields that  $\mathrm{GL}(\mathfrak{g})$ is a closed subgroup of $\mathrm{Isom}(\mathfrak{m}(\mathfrak{g}),(,))$.
  On the other hand, 
  by the expression (\ref{align:0909005025}), $\R_{>0}\mathrm{Aut}(\mathfrak{g})$ is a closed subgroup of $\mathrm{GL}(\mathfrak{g})$.
  Hence $\R_{>0}\mathrm{Aut}(\mathfrak{g})$ is closed in $\mathrm{Isom}(\mathfrak{m}(\mathfrak{g}),(,))$.
\end{proof}

\subsection{some sufficient conditions for maximal metrics}  

In this subsection, we show the first assertion of Theorem~\ref{theo:1210142830} by proving Theorem~\ref{theo:0910055521}, and
show the second assertion by proving Theorem~\ref{theo:1130135127}.
Firstly, we give a following sufficient condition 
for left-invariant metrics on a simply connected Lie group to be maximal.

\begin{prop}
  \label{prop:0905222855}
  Let $G$ be a simply connected Lie group, and
  $\langle , \rangle$ be a left-invariant metric on $G$.
  Denote by $\mathfrak{g}$ the Lie algebra of $G$.
  If the orbit $\R_{>0}\mathrm{Aut}(\mathfrak{g}).\langle , \rangle$ has the maximal orbit type (see Section~3),
  then the equivalent class $[\langle, \rangle] \in \mathfrak{M}(G)/_{\! \sim}$
  is a maximal element. In other words, $\langle, \rangle$ is a maximal metric.
\end{prop}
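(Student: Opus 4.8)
The plan is to show that if $h\in\mathfrak{M}(G)$ satisfies $\mathrm{Isom}(G,\langle,\rangle)\subset\mathrm{Isom}(G,h)$, then $[\langle,\rangle]=[h]$, which is the characterization of maximality in \eqref{eq:0114133517}. The key point is that the left translations $L(G)\subset\mathrm{Isom}(G,\langle,\rangle)$ are also isometries of $h$, so $h$ is automatically a left-invariant metric, hence corresponds to an inner product on $\mathfrak{g}$ which I also denote by $h\in\mathfrak{m}(\mathfrak{g})$. Thus the problem is transferred from $\mathfrak{M}(G)$ to the finite-dimensional space $\mathfrak{m}(\mathfrak{g})$ with the $\R_{>0}\mathrm{Aut}(\mathfrak{g})$-action.

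Next I would compare the isotropy subgroups of the $\R_{>0}\mathrm{Aut}(\mathfrak{g})$-action at $\langle,\rangle$ and at $h$. By Lemma~\ref{lemm:0909002910}, this isotropy subgroup at an inner product is exactly $\mathrm{Aut}(\mathfrak{g})\cap O(\cdot)$. The inclusion $\mathrm{Isom}(G,\langle,\rangle)\subset\mathrm{Isom}(G,h)$ restricts to an inclusion of the isotropy subgroups at the identity element $e\in G$: an isometry of $(G,\langle,\rangle)$ fixing $e$ that is also an automorphism of $\mathfrak{g}$ (which includes all of $\mathrm{Aut}(\mathfrak{g})\cap O(\langle,\rangle)$, realized as isometries fixing $e$) must then fix $e$ and preserve $h$, so it lies in $O(h)$; being already in $\mathrm{Aut}(\mathfrak{g})$, it lies in $\mathrm{Aut}(\mathfrak{g})\cap O(h)$. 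This gives $\mathrm{Aut}(\mathfrak{g})\cap O(\langle,\rangle)\subset\mathrm{Aut}(\mathfrak{g})\cap O(h)$, i.e. the stabilizer of $\langle,\rangle$ is contained in the stabilizer of $h$, which in the language of Section~3 says precisely $\R_{>0}\mathrm{Aut}(\mathfrak{g}).\langle,\rangle < \R_{>0}\mathrm{Aut}(\mathfrak{g}).h$.

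Now I invoke the maximal orbit type hypothesis: since $\R_{>0}\mathrm{Aut}(\mathfrak{g}).\langle,\rangle$ is maximal with respect to $<$ and is dominated by $\R_{>0}\mathrm{Aut}(\mathfrak{g}).h$, the two orbits coincide, $\R_{>0}\mathrm{Aut}(\mathfrak{g}).\langle,\rangle = \R_{>0}\mathrm{Aut}(\mathfrak{g}).h$. Hence $h$ lies in the $\R_{>0}\mathrm{Aut}(\mathfrak{g})$-orbit of $\langle,\rangle$, and by Lemma~\ref{lemm:0910132712} the left-invariant metrics $\langle,\rangle$ and $h$ on $G$ are isometric up to scaling, i.e. $[\langle,\rangle]=[h]$ in $\mathfrak{M}(G)/_{\!\sim}$. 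This completes the proof.

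The main obstacle I anticipate is the bookkeeping in the middle step: one must carefully identify, via the isomorphism $\mathrm{Aut}(G)\cong\mathrm{Aut}(\mathfrak{g})$ for simply connected $G$, the group $\mathrm{Aut}(\mathfrak{g})\cap O(\langle,\rangle)$ with a genuine subgroup of $\mathrm{Isom}(G,\langle,\rangle)_e$ (the stabilizer of $e$), and then argue that the hypothesized inclusion of full isometry groups genuinely forces the inclusion $\mathrm{Aut}(\mathfrak{g})\cap O(\langle,\rangle)\subset O(h)$ at the Lie-algebra level. The subtlety is that the stabilizer of $e$ in $\mathrm{Isom}(G,h)$ need not a priori consist of automorphisms of $\mathfrak{g}$, but we only need the reverse containment for the specific elements coming from $\mathrm{Aut}(\mathfrak{g})\cap O(\langle,\rangle)$, which do act as linear automorphisms on $\mathfrak{g}=T_eG$ and preserve $h$ because they are $h$-isometries fixing $e$; so the argument goes through once this identification is made precise.
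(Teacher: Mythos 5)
Your proposal is correct and follows essentially the same route as the paper: observe $h$ is forced to be left-invariant, pass to the inclusion of stabilizers $\mathrm{Aut}(\mathfrak{g})\cap O(\langle,\rangle)\subset\mathrm{Aut}(\mathfrak{g})\cap O(h)$ via the identification $\mathrm{Isom}(G,g)\cap\mathrm{Aut}(G)\cong O(g)\cap\mathrm{Aut}(\mathfrak{g})$ for simply connected $G$, conclude $\R_{>0}\mathrm{Aut}(\mathfrak{g}).\langle,\rangle<\R_{>0}\mathrm{Aut}(\mathfrak{g}).h$, and invoke maximal orbit type together with Lemma~\ref{lemm:0910132712}. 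The bookkeeping point you flag at the end is exactly the step the paper also relies on, and your handling of it is sound.
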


\begin{proof}
  \label{proof:0906210631}
  Assume that $\R_{>0}\mathrm{Aut}(\mathfrak{g}).\langle , \rangle$ has the maximal orbit type.
  Take any $\mathrm{Isom}(G,\langle,\rangle)$-invariant metric $\langle, \rangle^{\prime} \in \mathfrak{M}(G)$.  
  Since $\mathrm{Isom}(G,\langle, \rangle) \subset \mathrm{Isom}(G,\langle,\rangle^{\prime})$,  
  the metric $\langle, \rangle^{\prime}$ is also a left-invariant metric on $G$.
  To show that $\langle, \rangle$ and $\langle, \rangle^{\prime}$ are isometric up to scaling,
  we have only to show that $\R_{>0}\mathrm{Aut}(\mathfrak{g}).\langle, \rangle = \R_{>0}\mathrm{Aut}(\mathfrak{g}).\langle, \rangle^{\prime}$.
  One has 
\begin{equation*}
  \mathrm{Isom}(G,\langle, \rangle) \cap \mathrm{Aut}(G) \subset \mathrm{Isom}(G,\langle,\rangle^{\prime}) \cap \mathrm{Aut}(G).
\end{equation*}
  Also, since $G$ is simply connected, one has
  $\mathrm{Isom}(G,g) \cap \mathrm{Aut}(G) \cong O(g) \cap \mathrm{Aut}(\mathfrak{g})$ for any left-invariant metric $g$ via the differential at the unit element $e \in G$.
  Here, $O(g) \subset \mathrm{GL}(\mathfrak{g})$ is the orthogonal group with respect to the inner product $g$.
  This yields that
  \begin{equation*}
   O(\langle, \rangle) \cap \mathrm{Aut}(\mathfrak{g}) \subset O(\langle, \rangle^{\prime}) \cap \mathrm{Aut}(\mathfrak{g}). 
  \end{equation*}
  Note that $O(g) \cap \mathrm{Aut}(\mathfrak{g})$ is the stabilizer of the $\R_{>0}\mathrm{Aut}(\mathfrak{g})$-action at $g \in \mathfrak{m}(\mathfrak{g})$.
  Hence one has $\R_{>0}\mathrm{Aut}(\mathfrak{g}).\langle, \rangle < \R_{>0}\mathrm{Aut}(\mathfrak{g}).\langle, \rangle^{\prime}$.
  By the maximallity, one has $\R_{>0}\mathrm{Aut}(\mathfrak{g}).\langle, \rangle = \R_{>0}\mathrm{Aut}(\mathfrak{g}).\langle, \rangle^{\prime}$.
\end{proof}

If the orbit space $\R_{>0}\mathrm{Aut}(\mathfrak{g}) \backslash \mathfrak{m}(\mathfrak{g})$
is small, then one can reasonably check the maximality condition.
For examples, Proposition~\ref{prop:0905222855} shows that if the $\R_{>0}\mathrm{Aut}(\mathfrak{g})$-action is transitive
(\textit{i.e.} the orbit space is a point), then any left-invariant metric on $G$ is maximal.
Note that Lie algebras $\mathfrak{g}$ whose $\R_{>0}\mathrm{Aut}(\mathfrak{g})$-actions are transitive have been classified
by Lauret as follows:
\begin{theo}[\cite{MR1958155}]
  \label{theo:0910151743}
  Let $\mathfrak{g}$ be an $n$-dimensional Lie algebra.
  If the $\R_{>0}\mathrm{Aut}(\mathfrak{g})$-action is transitive, then $\mathfrak{g}$ is isomorphic to one of the following three Lie algebras:
  \begin{enumerate}
    [$(1)$]
  \item abelian Lie algebra $\R^{n}$,
  \item almost abelian Lie algebra $\R \ltimes_{\varphi} \R^{n-1}$ with $\varphi(t) := t \cdot \mathrm{id}_{\R^{n-1}}$.
    In other words, the Borel subalgebra of $\mathfrak{so}(n-1,1)$.
  \item the product algebra $\R^{n-3} \oplus \mathfrak{h}_{3}$ of the abelian Lie algebra $\R^{n-3}$ and the Heisenberg Lie algebra $\mathfrak{h}_{3}$.
  \end{enumerate}
\end{theo}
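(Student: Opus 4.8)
The plan is to convert transitivity of the $\R_{>0}\mathrm{Aut}(\mathfrak{g})$-action into a factorization of $\mathrm{GL}(\mathfrak{g})$, extract from it a strong infinitesimal constraint on $\mathrm{Der}(\mathfrak{g})$, and then run a structural classification of $\mathfrak{g}$; this is Lauret's theorem, so I only indicate the shape of the argument. First I would record the reformulation: since $\mathrm{GL}(\mathfrak{g})$ acts transitively on $\mathfrak{m}(\mathfrak{g})$ with isotropy $O(\langle\,,\,\rangle)$ at a fixed inner product, and the $\R_{>0}\mathrm{Aut}(\mathfrak{g})$-isotropy is $\mathrm{Aut}(\mathfrak{g})\cap O(\langle\,,\,\rangle)$ by Lemma~\ref{lemm:0909002910}, the $\R_{>0}\mathrm{Aut}(\mathfrak{g})$-action is transitive if and only if $\mathrm{GL}(\mathfrak{g}) = \R_{>0}\mathrm{Aut}(\mathfrak{g})\cdot O(\langle\,,\,\rangle)$; equivalently (Lemma~\ref{lemm:0910060429} and $\mathfrak{m}(\mathfrak{g})\cong\R_{>0}\times\mathrm{SL}(n)/\mathrm{SO}(n)$) $\R_{>0}\mathrm{Aut}(\mathfrak{g})$ acts transitively on this Hadamard manifold. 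Differentiating the factorization at the identity gives $\mathfrak{gl}(\mathfrak{g}) = \mathrm{Der}(\mathfrak{g}) + \R\,\mathrm{id} + \mathfrak{so}(\langle\,,\,\rangle)$, and projecting onto the space $\mathrm{Sym}(\langle\,,\,\rangle)$ of $\langle\,,\,\rangle$-symmetric endomorphisms (the orthogonal complement of $\mathfrak{so}(\langle\,,\,\rangle)$ in $\mathfrak{gl}(\mathfrak{g})$) yields the key surjectivity condition $\pi_{\mathrm{Sym}}(\mathrm{Der}(\mathfrak{g})) + \R\,\mathrm{id} = \mathrm{Sym}(\langle\,,\,\rangle)$, which must hold for every $\langle\,,\,\rangle$; in particular $\dim\mathrm{Der}(\mathfrak{g}) \ge \tfrac12 n(n+1) - 1$ (note $\mathrm{id}\notin\mathrm{Der}(\mathfrak{g})$ unless $\mathfrak{g}$ is abelian, so this sum is direct).

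Next I would eliminate a nontrivial Levi factor. If $\mathfrak{g} = \mathfrak{s}\ltimes\mathfrak{r}$ with $\mathfrak{s}$ semisimple and nonzero, the radical $\mathfrak{r}$ is characteristic, so every $D\in\mathrm{Der}(\mathfrak{g})$ is block triangular with respect to $\mathfrak{s}\oplus\mathfrak{r}$ and induces an inner derivation of $\mathfrak{g}/\mathfrak{r}\cong\mathfrak{s}$. Choosing $\langle\,,\,\rangle$ adapted to a Cartan decomposition $\mathfrak{s}=\mathfrak{k}\oplus\mathfrak{p}$ (and to the splitting $\mathfrak{s}\oplus\mathfrak{r}$), the symmetric part of the $\mathfrak{s}$-block of any derivation lies in $\mathrm{ad}(\mathfrak{p})$, so the $\mathfrak{s}$-block of $\pi_{\mathrm{Sym}}(\mathrm{Der}(\mathfrak{g})) + \R\,\mathrm{id}$ sits in a subspace of dimension at most $\dim\mathfrak{p} + 1 < \dim\mathrm{Sym}(\mathfrak{s})$, contradicting surjectivity. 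Hence $\mathfrak{g}$ is solvable, with nilradical $\mathfrak{n}\supseteq[\mathfrak{g},\mathfrak{g}]$.

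The heart of the argument is then the solvable case, driven by the observation that any characteristic ideal $\mathfrak{a}$ forces every derivation to be block triangular with respect to $\mathfrak{a}\oplus\mathfrak{a}^{\perp}$, confining $\pi_{\mathrm{Sym}}(\mathrm{Der}(\mathfrak{g})) + \R\,\mathrm{id}$ away from most of $\mathrm{Sym}(\langle\,,\,\rangle)$. If $\mathfrak{g}$ is nilpotent, either $\mathfrak{g}$ is abelian (case~(1)), or $[\mathfrak{g},\mathfrak{g}]$ is a proper nonzero characteristic ideal, and applying the block obstruction — first against a complement of $[\mathfrak{g},\mathfrak{g}]$, then, in the resulting $2$-step situation, against the conformal-symplectic geometry of the bracket form on $\mathfrak{g}/[\mathfrak{g},\mathfrak{g}]$ — forces $[\mathfrak{g},\mathfrak{g}]$ to be central and $1$-dimensional with bracket form of rank $2$, i.e.\ $\mathfrak{g}\cong\mathfrak{h}_3\oplus\R^{n-3}$ (case~(3)). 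If $\mathfrak{g}$ is solvable but not nilpotent, a nonzero semisimple element yields a maximal torus $\mathfrak{t}$ acting nontrivially on $\mathfrak{n}$; imposing surjectivity on $\mathrm{Sym}(\mathfrak{n})$ across the $\mathfrak{t}$-weight decomposition of $\mathfrak{n}$ forces $\mathfrak{n}$ abelian, $\dim\mathfrak{t}=1$, and the $\mathfrak{t}$-action a nonzero scalar, so $\mathfrak{g}\cong\R\ltimes_{\varphi}\R^{n-1}$ with $\varphi(t)=t\,\mathrm{id}$ (case~(2); here simple connectivity of $G$ is used to return from $\mathfrak{g}$ to the stated model). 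Conversely one checks directly that in each of the three cases $\pi_{\mathrm{Sym}}(\mathrm{Der}(\mathfrak{g}))+\R\,\mathrm{id}$ really fills $\mathrm{Sym}(\langle\,,\,\rangle)$, so the action is genuinely transitive.

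I expect the last paragraph to be the main obstacle: the dimension bound $\dim\mathrm{Der}(\mathfrak{g})\ge\tfrac12 n(n+1)-1$ is far from sufficient, since near misses such as the higher Heisenberg algebras $\mathfrak{h}_{2k+1}$ with $k\ge 2$, or almost-abelian algebras $\R\ltimes_{\varphi}\R^{n-1}$ with $\varphi$ non-scalar, carry enough derivations yet fail the finer surjectivity onto $\mathrm{Sym}(\langle\,,\,\rangle)$. Ruling these out requires pushing the block-structure and weight-decomposition obstructions through carefully — choosing test operators in $\mathrm{Sym}(\langle\,,\,\rangle)$ adapted to the lower central series and to the $\mathfrak{t}$-weights and showing they cannot be written as a symmetric derivation plus a multiple of the identity — together with some bookkeeping in small dimensions, where the three families degenerate and overlap.
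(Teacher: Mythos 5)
First, a point of reference: the paper does not prove this statement at all --- it is imported verbatim from Lauret \cite{MR1958155} --- so there is no in-paper argument to measure yours against. (Lauret obtains the classification in the ``dual'' picture of the variety of Lie brackets, analyzing when the $\mathrm{GL}_n$-orbit of the bracket $\mu_{\mathfrak{g}}$ collapses to $\R^{\ast}\mathrm{O}(n).\mu_{\mathfrak{g}}$; your picture, fixing the bracket and moving the inner product, is the equivalent transposed one.)

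Your reduction is correct and is the natural direct route: transitivity is equivalent to the factorization $\mathrm{GL}(\mathfrak{g})=\R_{>0}\mathrm{Aut}(\mathfrak{g})\cdot O(\langle\,,\,\rangle)$, hence --- using properness of the action (Lemma~\ref{lemm:0910060429}) to upgrade ``one open orbit'' to ``transitive'' --- to the infinitesimal condition $\mathfrak{gl}(\mathfrak{g})=\R\,\mathrm{id}+\mathrm{Der}(\mathfrak{g})+\mathfrak{so}(\langle\,,\,\rangle)$, and your elimination of a nonzero Levi factor via $\mathrm{Der}(\mathfrak{s})=\mathrm{ad}(\mathfrak{s})$ and a Cartan-adapted metric is sound. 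One small slip: directness of $\pi_{\mathrm{Sym}}(\mathrm{Der}(\mathfrak{g}))+\R\,\mathrm{id}$ would require $\mathrm{id}\notin\pi_{\mathrm{Sym}}(\mathrm{Der}(\mathfrak{g}))$, not merely $\mathrm{id}\notin\mathrm{Der}(\mathfrak{g})$; this does not affect the dimension bound, which holds either way.

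The genuine gap is that your third paragraph --- the classification of solvable $\mathfrak{g}$ satisfying the surjectivity condition --- is a programme rather than a proof, as you yourself acknowledge. The assertions ``applying the block obstruction \dots forces $[\mathfrak{g},\mathfrak{g}]$ to be central and $1$-dimensional with bracket form of rank $2$'' and ``imposing surjectivity across the $\mathfrak{t}$-weight decomposition forces $\mathfrak{n}$ abelian, $\dim\mathfrak{t}=1$, and the action scalar'' are precisely the content of the theorem, and neither is derived. Your own example shows why this cannot be waved through: $\mathfrak{h}_{5}$ passes the dimension bound ($\dim\mathrm{Der}(\mathfrak{h}_5)=15\ge 14=\tfrac12\cdot5\cdot6-1$), so excluding it requires exhibiting a concrete symmetric endomorphism not lying in $\pi_{\mathrm{Sym}}(\mathrm{Der})+\R\,\mathrm{id}$, and likewise for non-scalar almost abelian algebras, for $2$-step algebras with higher-dimensional derived algebra, and for the low-dimensional degenerate cases. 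As it stands, the proposal establishes a correct necessary condition and a correct strategy, but not the classification itself.
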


Note that $(1)$ and $(2)$ in Theorem~\ref{theo:0910151743} are isotropy irreducible.
On the other hand, the case $(3)$ is not isotropy irreducible.
This gives first nontrivial examples of maximal metrics:
\begin{cor}
  \label{cor:0910153509}
  Let $G := \R^{n-3} \times H_{3}$ be a product Lie group of the abelian Lie group $\R^{n-3}$ and the Heisenberg Lie group $H_{3}$.
  Then any left-invariant metric on $G$ is a maximal metric which is not isotropy irreducible.
\end{cor}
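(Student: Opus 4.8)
The plan is to combine Lauret's classification (Theorem~\ref{theo:0910151743}) with the sufficient condition of Proposition~\ref{prop:0905222855} to get maximality, and then to exclude isotropy irreducibility by a short curvature argument.

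First I would note that $G=\R^{n-3}\times H_{3}$ is simply connected with Lie algebra $\mathfrak{g}=\R^{n-3}\oplus\mathfrak{h}_{3}$, which is precisely case $(3)$ of Theorem~\ref{theo:0910151743}. Since the list in that theorem consists exactly of the Lie algebras whose $\R_{>0}\mathrm{Aut}(\mathfrak{g})$-action is transitive, the $\R_{>0}\mathrm{Aut}(\mathfrak{g})$-action on $\mathfrak{m}(\mathfrak{g})$ is transitive; equivalently, the orbit space $\R_{>0}\mathrm{Aut}(\mathfrak{g})\backslash\mathfrak{m}(\mathfrak{g})$ is a single point. Hence, for any left-invariant metric $\langle,\rangle$ on $G$, the orbit $\R_{>0}\mathrm{Aut}(\mathfrak{g}).\langle,\rangle=\mathfrak{m}(\mathfrak{g})$ is trivially of maximal orbit type (there is no orbit strictly above it), and Proposition~\ref{prop:0905222855} gives that $\langle,\rangle$ is maximal. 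This part is entirely routine once the transitivity is cited.

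It remains to see that no such $\langle,\rangle$ is isotropy irreducible. Since $(G,\langle,\rangle)$ is homogeneous, the isotropy representations at all points are equivalent, so it is enough to analyze the one at the identity $e$, where $\mathrm{Isom}(G,\langle,\rangle)_{e}$ acts on $T_{e}G\cong\mathfrak{g}$. The Ricci tensor is invariant under isometries, so the Ricci operator of $\langle,\rangle$ at $e$ (i.e.\ $\mathrm{Ric}_{\langle,\rangle}$ viewed as a $\langle,\rangle$-symmetric endomorphism of $\mathfrak{g}$) commutes with the isotropy action, and each of its eigenspaces is an invariant subspace of the isotropy representation. Now $G$ is nilpotent and non-abelian, so by Milnor's analysis of left-invariant metrics on Lie groups the Ricci operator of every left-invariant metric on $G$ has a strictly positive eigenvalue and a strictly negative eigenvalue (the scalar curvature is negative, yet there is a direction of positive Ricci curvature). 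In particular it is not a scalar multiple of the identity, so its eigenspace decomposition is a nontrivial $\mathrm{Isom}(G,\langle,\rangle)_{e}$-invariant splitting of $\mathfrak{g}$; hence the isotropy representation is reducible and $\langle,\rangle$ is not isotropy irreducible.

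The only step deserving care is this last one: producing a genuine proper invariant subspace for the isotropy representation. The Ricci-eigenspace argument above is self-contained. Alternatively, one may argue that the derived ideal $[\mathfrak{g},\mathfrak{g}]=\R z$, with $z$ the central generator of $\mathfrak{h}_{3}$, is a one-dimensional characteristic ideal, so the isotropy subgroup at $e$---which by Wilson's description of the isometry group of a simply connected nilpotent Lie group with a left-invariant metric equals $\mathrm{Aut}(\mathfrak{g})\cap O(\langle,\rangle)$---preserves $\R z$ and its $\langle,\rangle$-orthogonal complement, a proper nonzero splitting because $\dim\mathfrak{g}=n\geq 3$. Either route finishes the proof; I would present the Ricci version in the text since it avoids invoking the precise structure of $\mathrm{Isom}(G,\langle,\rangle)$.
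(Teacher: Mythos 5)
Your proof is correct. The maximality half is exactly the paper's route: cite Lauret's classification (Theorem~\ref{theo:0910151743}) to get transitivity of the $\R_{>0}\mathrm{Aut}(\mathfrak{g})$-action, so the single orbit is trivially of maximal orbit type and Proposition~\ref{prop:0905222855} applies. For the failure of isotropy irreducibility you diverge from the paper in an interesting way. The paper (Remark~\ref{rem:0714141634}) argues structurally: by Wilson's theorem the isotropy group at $e$ is $\mathrm{Aut}(\mathfrak{g})\cap O(\langle,\rangle)$, and the nontrivial center $\mathfrak{z}(\mathfrak{g})$ of a non-abelian nilpotent $\mathfrak{g}$ is a proper invariant subspace. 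Your secondary argument (the derived ideal $[\mathfrak{g},\mathfrak{g}]=\R z$ as a characteristic ideal, again via Wilson) is essentially the same idea with a different characteristic subspace, and for $\mathfrak{g}=\R^{n-3}\oplus\mathfrak{h}_3$ either the center $\R^{n-3}\oplus\R z$ or the derived ideal $\R z$ works since $n\geq 3$. Your primary Ricci-eigenspace argument via Milnor's theorem (every left-invariant metric on a non-abelian nilpotent group has Ricci eigenvalues of both signs) is a genuinely different mechanism: it trades Wilson's description of the full isometry group for a curvature computation, is self-contained in the sense that it only uses naturality of $\mathrm{Ric}$ under isometries, and does not require identifying the isotropy group explicitly. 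One small simplification you could make: to disprove isotropy irreducibility you only need reducibility at a single point, so the opening observation that the isotropy representations at all points are equivalent is not needed. Both of your routes are valid; the paper's is slightly more elementary in that it avoids any curvature input.
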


\begin{rem}
  \label{rem:0714141634}
  Note that a simply connected nilpotent Lie group $G$ with left-invariant metric $\langle, \rangle$ is isotropy irreducible if and only if $G$ is abelian.
  By the result of Wilson (\cite{MR661539}), the isotropy representation of $(G,\langle, \rangle)$ at the unit element $e$ is
  the action of $\mathrm{Aut}(\mathfrak{g}) \cap O(\langle, \rangle)$ on $\mathfrak{g}$, where $\mathfrak{g} = T_{e}G$ is the Lie algebra of $G$.
  If $\mathfrak{g}$ is non-abelian and nilpotent, there is a nontrivial center $\mathfrak{z}(\mathfrak{g}) \subset \mathfrak{g}$, which is an invariant subspace of the isotropy
  representation.
\end{rem}

Recall that $\R_{>0}\mathrm{Aut}(\mathfrak{g})$-actions are proper isometric actions on the Hadamard manifolds $\mathfrak{m}(\mathfrak{g})$.
By applying Proposition~\ref{prop:0909223530} and Proposition~\ref{prop:0905222855}, one has
the first assertion of Theorem~\ref{theo:1210142830}:

\begin{theo}
  \label{theo:0910055521}
  Let $G$ be a simply connected Lie group, and
  $\langle , \rangle$ be a left-invariant metric on $G$.
  Denote by $\mathfrak{g}$ the Lie algebra of $G$.
  If the orbit $\R_{>0}\mathrm{Aut}(\mathfrak{g}).\langle , \rangle$ is an isolated orbit,
  then $\langle, \rangle$ is a maximal metric.
\end{theo}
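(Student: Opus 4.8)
The plan is to deduce Theorem~\ref{theo:0910055521} directly by combining two results already established in the excerpt: Proposition~\ref{prop:0909223530}, which says that for a closed subgroup of the isometry group of a manifold with everywhere-diffeomorphic exponential map, an isolated orbit is automatically of maximal orbit type; and Proposition~\ref{prop:0905222855}, which says that if $\R_{>0}\mathrm{Aut}(\mathfrak{g}).\langle,\rangle$ has maximal orbit type then $\langle,\rangle$ is a maximal metric. So the entire content of the proof is the verification that these two propositions apply to the setting at hand.

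\medskip

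\textbf{The key steps, in order.} First I would recall from the previous subsection that $(\mathfrak{m}(\mathfrak{g}),(,))$ is a $\mathrm{GL}(\mathfrak{g})$-homogeneous Riemannian manifold which, via the de Rham decomposition $\mathfrak{m}(\mathfrak{g}) \cong \R_{>0}\times\mathrm{SL}(n)/\mathrm{SO}(n)$, is an Hadamard manifold; in particular its exponential map $\exp_{p}: T_{p}\mathfrak{m}(\mathfrak{g}) \to \mathfrak{m}(\mathfrak{g})$ is a diffeomorphism at every point $p$, by the Cartan--Hadamard theorem. Second, I would invoke Lemma~\ref{lemm:0910060429}: $\R_{>0}\mathrm{Aut}(\mathfrak{g})$ is a closed subgroup of $\mathrm{Isom}(\mathfrak{m}(\mathfrak{g}),(,))$ acting properly and isometrically. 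Thus the hypotheses of Proposition~\ref{prop:0909223530} are met with $(X,\langle,\rangle^{\text{there}}) = (\mathfrak{m}(\mathfrak{g}),(,))$ and $G^{\text{there}} = \R_{>0}\mathrm{Aut}(\mathfrak{g})$. Third, by hypothesis the orbit $\R_{>0}\mathrm{Aut}(\mathfrak{g}).\langle,\rangle$ is an isolated orbit, so Proposition~\ref{prop:0909223530} gives that this orbit has maximal orbit type. Finally, applying Proposition~\ref{prop:0905222855} to this orbit yields that $\langle,\rangle$ is a maximal metric, completing the proof.

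\medskip

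\textbf{On the main obstacle.} Since both heavy-lifting propositions are already in hand, there is essentially no obstacle: the proof is a short chain of citations, and the only thing requiring (minor) care is the bookkeeping of which Riemannian manifold and which group play the roles of $X$ and $G$ in Proposition~\ref{prop:0909223530} --- namely, the ``manifold'' is the space of inner products $\mathfrak{m}(\mathfrak{g})$ and not the Lie group $G$, and the acting group is $\R_{>0}\mathrm{Aut}(\mathfrak{g})$ and not $\mathrm{Isom}(G,\langle,\rangle)$. One should also note explicitly that the real work was done earlier: Proposition~\ref{prop:0909223530} is where the Hadamard geometry (via Lemma~\ref{lemm:0915152534} on connectedness of the set of upper bounds) is used, and Proposition~\ref{prop:0905222855} is where simple-connectedness of $G$ enters, through the bijectivity of $\mathrm{Aut}(G)\to\mathrm{Aut}(\mathfrak{g})$ and the identification of stabilizers $O(g)\cap\mathrm{Aut}(\mathfrak{g})$ with $\mathrm{Isom}(G,g)\cap\mathrm{Aut}(G)$. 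The present theorem merely packages these.
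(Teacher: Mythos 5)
Your proposal is correct and follows exactly the paper's route: the paper likewise obtains this theorem by combining Proposition~\ref{prop:0909223530} (isolated $\Rightarrow$ maximal orbit type, applicable because $\R_{>0}\mathrm{Aut}(\mathfrak{g})$ acts properly and isometrically on the Hadamard manifold $\mathfrak{m}(\mathfrak{g})$) with Proposition~\ref{prop:0905222855}. Your bookkeeping of which space and group play the roles of $X$ and $G$ is also the intended one.
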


It has been known that singular orbits of cohomogeneity one actions are isolated orbits. 
It has been shown that the cohomogeneity of $\R_{>0}\mathrm{Aut}(\mathfrak{g})$-actions for $3$-dimensional solvable $\mathfrak{g}$
are at most one, and some of them are cohomogeneity one actions with singular orbits.
For examples,
the Lie algebra of the motion group of $\R^{2}$ which is given by $\mathfrak{g} := (\mathrm{span}\{v_{1}, v_{2}, v_{3}\},[,])$,
\begin{equation*}
  [v_{i}, v_{j}] = 0 \ (i,j \in \{1,2\}), \quad [v_{1}, v_{3}] = -v_{2}, \quad [v_{2}, v_{3}] = v_{1}
\end{equation*}
gives an example of $\R_{>0}\mathrm{Aut}(\mathfrak{g})$-action with a singular orbit.
For more details, see \cite{MR3663797}.

Note that an $\R_{>0}\mathrm{Aut}(\mathfrak{g})$-orbit being an isolated orbit is not a necessary condition for left-invariant to be a maximal metric (see Remark~\ref{rem:0916042309}).
However, if $\mathfrak{g}$ is a unimodular completely solvable Lie algebra,
then the isolation of the orbit is also a necessary condition:

\begin{theo}
  \label{theo:1130135127}
  Let $G$ be a simply connected unimodular completely solvable Lie group, and
  $\langle , \rangle$ be a left-invariant metric on $G$.
  Denote by $\mathfrak{g}$ the Lie algebra of $G$.
  Then $\langle, \rangle$ is a maximal metric if and only if
  the orbit $\R_{>0}\mathrm{Aut}(\mathfrak{g}).\langle , \rangle$ is an isolated orbit.
\end{theo}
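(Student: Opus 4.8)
By Theorem~\ref{theo:0910055521} the ``if'' direction already holds, with no solvability hypothesis, so the plan is to establish only the converse: if $\langle,\rangle$ is maximal then the orbit $\mathcal{O}:=\R_{>0}\mathrm{Aut}(\mathfrak{g}).\langle,\rangle$ is isolated. Since $\R_{>0}\mathrm{Aut}(\mathfrak{g})$ acts properly and isometrically on the Hadamard manifold $\mathfrak{m}(\mathfrak{g})$ (Lemma~\ref{lemm:0910060429}), Proposition~\ref{prop:0909223530} identifies ``isolated'' with ``of maximal orbit type''. Hence I would prove the contrapositive: if $\mathcal{O}$ is \emph{not} of maximal orbit type, then $\langle,\rangle$ is not maximal.

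Assume $\mathcal{O}$ is not of maximal orbit type. Then there is an orbit $\mathcal{O}'\neq\mathcal{O}$ with $\mathcal{O}<\mathcal{O}'$, and by the very definition of the preorder $<$ we may pick a representative, i.e.\ a left-invariant metric $\langle,\rangle'$ with $\mathcal{O}'=\R_{>0}\mathrm{Aut}(\mathfrak{g}).\langle,\rangle'$, such that the isotropy subgroups of the $\R_{>0}\mathrm{Aut}(\mathfrak{g})$-action satisfy
\begin{equation*}
  O(\langle,\rangle)\cap\mathrm{Aut}(\mathfrak{g})\ \subseteq\ O(\langle,\rangle')\cap\mathrm{Aut}(\mathfrak{g})
\end{equation*}
(recall Lemma~\ref{lemm:0909002910}, which identifies these intersections with the stabilizers). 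Since $G$ is simply connected, $\mathrm{Aut}(G)\cong\mathrm{Aut}(\mathfrak{g})$, and differentiation at the identity gives, compatibly as the metric varies, $\mathrm{Isom}(G,h)\cap\mathrm{Aut}(G)\cong O(h)\cap\mathrm{Aut}(\mathfrak{g})$ for every left-invariant metric $h$; so the displayed inclusion becomes
\begin{equation*}
  \mathrm{Isom}(G,\langle,\rangle)\cap\mathrm{Aut}(G)\ \subseteq\ \mathrm{Isom}(G,\langle,\rangle')\cap\mathrm{Aut}(G).
\end{equation*}

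The crucial, and I expect the hardest, step is to upgrade this to the full inclusion $\mathrm{Isom}(G,\langle,\rangle)\subseteq\mathrm{Isom}(G,\langle,\rangle')$. Here I would invoke the structure theorem for isometry groups of solvmanifolds: since $G$ is simply connected, unimodular and completely solvable, \emph{every} left-invariant metric $h$ on $G$ satisfies $\mathrm{Isom}(G,h)=L_{G}\cdot\bigl(\mathrm{Isom}(G,h)\cap\mathrm{Aut}(G)\bigr)$, where $L_{G}$ denotes the group of left translations --- this is the unimodular completely solvable case of the Gordon--Wilson description of isometry groups of solvmanifolds, and unimodularity is genuinely needed (for real hyperbolic space, a non-unimodular almost abelian solvmanifold, the isometry group is far larger than $L_{G}$ together with the orthogonal automorphisms). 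Granting this, any $\phi\in\mathrm{Isom}(G,\langle,\rangle)$ factors as $\phi=L_{a}\circ\psi$ with $a\in G$ and $\psi\in\mathrm{Isom}(G,\langle,\rangle)\cap\mathrm{Aut}(G)$; the translation $L_{a}$ preserves $\langle,\rangle'$ because $\langle,\rangle'$ is left-invariant, and $\psi$ preserves $\langle,\rangle'$ by the inclusion just obtained, so $\phi\in\mathrm{Isom}(G,\langle,\rangle')$. Thus $\mathrm{Isom}(G,\langle,\rangle)\subseteq\mathrm{Isom}(G,\langle,\rangle')$.

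Finally, $\langle,\rangle$ and $\langle,\rangle'$ lie in distinct $\R_{>0}\mathrm{Aut}(\mathfrak{g})$-orbits, and for $G$ simply connected, unimodular and completely solvable the converse of Lemma~\ref{lemm:0910132712} holds, so they are not isometric up to scaling; that is, $[\langle,\rangle]\neq[\langle,\rangle']$ in $\mathfrak{M}(G)/_{\!\sim}$. Together with $\mathrm{Isom}(G,\langle,\rangle)\subseteq\mathrm{Isom}(G,\langle,\rangle')$ this contradicts the maximality criterion~\eqref{eq:0114133517}, so $\langle,\rangle$ is not maximal. This proves the contrapositive and hence the theorem. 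Aside from the isometry-group structure result highlighted above, each step merely translates between the $\R_{>0}\mathrm{Aut}(\mathfrak{g})$-action on $\mathfrak{m}(\mathfrak{g})$ and the isometry groups of left-invariant metrics, exactly as in the proofs of Proposition~\ref{prop:0905222855} and Theorem~\ref{theo:0910055521}.
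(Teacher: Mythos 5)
Your proposal is correct and follows essentially the same route as the paper: the nontrivial direction rests on the same two inputs, namely the Gordon--Wilson structure theorem $\mathrm{Isom}(G,h)\cong(O(h)\cap\mathrm{Aut}(\mathfrak{g}))\ltimes L_{G}$ for unimodular completely solvable $G$ to upgrade the stabilizer inclusion to an inclusion of full isometry groups, and the Alekseevskii-type rigidity (equivalently, the converse of Lemma~\ref{lemm:0910132712} in this setting) to identify ``isometric up to scaling'' with ``same $\R_{>0}\mathrm{Aut}(\mathfrak{g})$-orbit.'' The only difference is that you argue by contraposition where the paper argues directly, which is immaterial.
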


\begin{proof}
  \label{proof:1130135929}
  Assume that $\langle, \rangle$ is maximal.
  By the diagram (\ref{eq:0910002409}), we have only to show that $\R_{>0}\mathrm{Aut}(\mathfrak{g}).\langle, \rangle$ has maximal orbit type.
  Namely
  $O(\langle, \rangle) \cap\mathrm{Aut}(\mathfrak{g}) \subset  O(\langle, \rangle^{\prime}) \cap\mathrm{Aut}(\mathfrak{g})$
  implies $\R_{>0}\mathrm{Aut}(\mathfrak{g}).\langle, \rangle = \R_{>0}\mathrm{Aut}(\mathfrak{g}).\langle, \rangle^{\prime}$.

  Take any left-invariant metric $\langle, \rangle^{\prime}$ with
  $O(\langle, \rangle) \cap\mathrm{Aut}(\mathfrak{g}) \subset  O(\langle, \rangle^{\prime}) \cap\mathrm{Aut}(\mathfrak{g})$.
  When $G$ is simply connected unimodular completely solvable, it has been known that
  \begin{equation*}
   \mathrm{Isom}(G,g) \cong (O(g) \cap \mathrm{Aut}(\mathfrak{g})) \ltimes L_{G} 
  \end{equation*}
  for any left-invariant metric $g$ on $G$, where $L_{G}$ is the group of left-translations of $G$ (\cite{MR936815}).
  This yields that
  $O(\langle, \rangle) \cap\mathrm{Aut}(\mathfrak{g}) \subset  O(\langle, \rangle^{\prime}) \cap\mathrm{Aut}(\mathfrak{g})$ implies
  $\mathrm{Isom}(G,\langle, \rangle) \subset \mathrm{Isom}(G,\langle, \rangle^{\prime})$.
  Since $\langle, \rangle$ is maximal,
  there exists $\lambda > 0$ such that $(G,\lambda \langle, \rangle)$ and $(G,\langle, \rangle^{\prime})$ are isometric.
  It has been known that if two completely solvable Riemannian Lie groups $(H,g)$ and $(H^{\prime}, g^{\prime})$
  are isometric then there exists a group isomorphism $\varphi : H \to H^{\prime}$ such that $\varphi$ is also a Riemannian isometry (\cite{MR0362145}).
  This yields that there exists an automorphism $\psi : G \to G$ such that
  $\psi$ is an isometry between
  $(G,\lambda \langle, \rangle)$ and $(G,\langle, \rangle^{\prime})$.
  This concludes that $\R_{>0}\mathrm{Aut}(\mathfrak{g}).\langle, \rangle = \R_{>0}\mathrm{Aut}(\mathfrak{g}).\langle, \rangle^{\prime}$.
\end{proof}

\begin{rem}
  \label{rem:0916042309}
  The converse of Theorem~\ref{theo:0910055521} does not hold in general. For examples,
  let us consider the Lie algebra $\mathfrak{s} := \mathrm{span}\{x_{1}, \ldots , x_{n}, y_{1}, \ldots y_{n}, z, v\}$ by
  \begin{equation*}
    [x_{i}, y_{i}] = z, \ [x_{i}, v] = (1/2)x_{i}, \ [y_{i}, v] = (1/2)y_{i}, \ [z,v] = z.
  \end{equation*}  
  Denote by $\langle, \rangle$ an inner product on $\mathfrak{s}$ with an orthonormal basis $\{x_{1}, \ldots, y_{n}, z, v\}$.
  Then the simply connected Riemannian Lie group with respect to the metric Lie algebra $(\mathfrak{s}, \langle , \rangle)$
  is isometric to complex hyperbolic space $\mathbb{C}H^{n}$ up to scaling.
  Since $\mathbb{C}H^{n}$ is isotropy irreducible, then the left-invariant metric $\langle, \rangle$ is maximal.
  On the other hand, one can see that the orbit $\R_{>0}\mathrm{Aut}(\mathfrak{s}).\langle, \rangle$ is not isolated.  
\end{rem}


\section{examples}

In this section,
by applying Theorem~\ref{theo:1210142830},
we prove Theorem~\ref{theo:1012173304} and Theorem~\ref{theo:1207153743} that give examples of left-invariant maximal metrics on some solvable Lie groups.

\subsection{general settings}

According to Theorem~\ref{theo:1210142830},
in order to construct examples of left-invariant maximal metrics,
we have only to find a metric Lie algebra $(\mathfrak{g},\langle, \rangle)$
so that the orbit $\R_{>0}\mathrm{Aut}(\mathfrak{g}).\langle, \rangle$ is an isolated orbit in $\mathfrak{M}(\mathfrak{g})$.
The following characterization of isolated orbits was obtained by the author:

\begin{prop}[\cite{MR3771997}]
  \label{prop:0728212900}
  Let $X$ be a complete connected Riemannian manifold, and $G \subset \mathrm{Isom}(X)$ be a closed subgroup.
  Then an orbit $G.p$ is an isolated orbit if and only if the slice representation at $p$ has no nonzero fixed normal vectors.
\end{prop}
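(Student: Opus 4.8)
The plan is to analyze the local structure of $G \backslash X$ near $G.p$ via the slice theorem for proper isometric actions. Since $G$ is a closed subgroup of $\mathrm{Isom}(X)$ and $X$ is complete and connected, the $G$-action is proper, so the isotropy group $G_{p}$ is compact. Write $\nu_{p} := (T_{p}(G.p))^{\perp} \subset T_{p}X$ for the normal space at $p$; the slice representation is the linear action of $G_{p}$ on $\nu_{p}$. By the existence of slices (e.g.\ \cite{MR3362465}), there is a $G_{p}$-invariant ball $S \subset \nu_{p}$ about the origin such that $\exp_{p}$ carries $S$ diffeomorphically onto a slice at $p$, and the twisted product map $G \times_{G_{p}} S \to X$, $[g,v] \mapsto g.\exp_{p}(v)$, is a $G$-equivariant diffeomorphism onto a $G$-invariant open neighborhood $W$ of $G.p$. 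Passing to orbit spaces yields a homeomorphism from an open neighborhood of $G.p$ in $G\backslash X$ onto $G_{p}\backslash S$, under which $G.p$ goes to the class of $0$ and the orbit through $\exp_{p}(v)$ has isotropy group $G_{\exp_{p}(v)} = (G_{p})_{v}$.

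Next I would identify, in this model, which nearby orbits lie in $[G.p]$, the claim being: for $v \in S$ one has $[G.\exp_{p}(v)] = [G.p]$ if and only if $v$ is fixed by the slice representation. Indeed, by definition of $\sim$, $[G.\exp_{p}(v)] = [G.p]$ means $(G_{p})_{v} = G_{\exp_{p}(v)}$ is conjugate in $G$ to $G_{p}$, hence in particular $(G_{p})_{v} \cong G_{p}$. Since $(G_{p})_{v}$ is a closed subgroup of the compact group $G_{p}$ it has finitely many connected components, so Lemma~\ref{lemm:1022113054} applied to the inclusion $(G_{p})_{v} \subset G_{p}$ forces $(G_{p})_{v} = G_{p}$, i.e.\ $v \in \nu_{p}^{G_{p}}$. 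Conversely, if $v \in \nu_{p}^{G_{p}}$ then $G_{\exp_{p}(v)} = (G_{p})_{v} = G_{p}$, so the point $\exp_{p}(v)$ witnesses $G.\exp_{p}(v) \sim G.p$. Moreover distinct fixed vectors give distinct orbits, since $G.\exp_{p}(v) \cap S = G_{p}.v = \{v\}$ for $v \in \nu_{p}^{G_{p}}$. Hence the orbits of $[G.p]$ lying in $W$ are in bijection with the set $\nu_{p}^{G_{p}} \cap S$.

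Finally, $\nu_{p}^{G_{p}} \cap S$ is a single point precisely when $\nu_{p}^{G_{p}} = \{0\}$, because $\nu_{p}^{G_{p}}$ is a linear subspace and $S$ is a ball about $0$. So if the slice representation has no nonzero fixed vector, then $[G.p] \cap W = \{G.p\}$ and $G.p$ is isolated; and if there is a nonzero fixed $v_{0} \in \nu_{p}^{G_{p}}$, then for arbitrarily small $t>0$ the orbit $G.\exp_{p}(tv_{0})$ lies in $[G.p]$ and is distinct from $G.p$, so $G.p$ is not isolated. This gives the stated equivalence.

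I expect the only real subtlety to be bookkeeping: quoting the slice theorem in precisely the form that delivers both the homeomorphism $G\backslash W \cong G_{p}\backslash S$ and the identification $G_{\exp_{p}(v)} = (G_{p})_{v}$, and being careful that $G.p \sim G.q$ unwinds to conjugacy of isotropy groups so that Lemma~\ref{lemm:1022113054} genuinely applies. Once those are in place, the rest is formal.
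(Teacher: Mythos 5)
Your argument is correct. Note that the paper does not prove this proposition at all --- it is imported from the reference \cite{MR3771997} --- so there is no in-text proof to compare against; your tube-theorem argument is the standard one and is presumably what that reference does. The two points worth keeping explicit are exactly the ones you flag: the relation $G.p \sim G.q$ from Section~3 unwinds to conjugacy of stabilizers, so that the chain ``$(G_{p})_{v} \subset G_{p}$, $(G_{p})_{v} \cong G_{p}$, $(G_{p})_{v}$ compact'' plus Lemma~\ref{lemm:1022113054} yields $(G_{p})_{v} = G_{p}$; and the tube theorem's identification $G_{\exp_{p}(v)} = (G_{p})_{v}$ together with $G.\exp_{p}(v) \cap \exp_{p}(S) = \exp_{p}(G_{p}.v)$, which is what makes the orbits through distinct fixed vectors distinct and reduces isolatedness to $\nu_{p}^{G_{p}} \cap S$ being a single point.
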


Here, the slice representation of an orbit $G.p \subset X$ at a point $p \in X$ is a linear representation of the stabilizer $G_{p}$ on the normal space $(T_{p}G.p)^{\perp}$ by differential.

Therefore, we construct metric Lie algebras $(\mathfrak{g},\langle , \rangle)$
such that the slice representation of the $\R_{>0}\mathrm{Aut}(\mathfrak{g})$-action at $\langle , \rangle$
has no nonzero fixed normal vectors.
Recall that the tangent space at $\langle , \rangle \in \mathfrak{M}(\mathfrak{g})$ is naturally identified with
the space of symmetric bilinear forms $\mathrm{sym}(\mathfrak{g})$.
By fixing an orthonormal basis $\{v_{1}, \ldots, v_{n}\} \subset \mathfrak{g}$,
one can see that the normal space $(T_{\langle , \rangle}\R_{>0}\mathrm{Aut}(\mathfrak{g}).\langle , \rangle)^{\perp}$ is given by
\begin{equation}
  \label{eq:0728212928}
    \{\theta \in \mathrm{sym}(\mathfrak{g}) \mid  \forall A \in \R \oplus \mathrm{Der}(\mathfrak{g}),\  \sum_{i}\theta(v_{i},A v_{i}) = 0\},
  \end{equation}
  where $\R \oplus \mathrm{Der}(\mathfrak{g}) := \{c \cdot \mathrm{id}_{\mathfrak{g}} + D \mid c \in \R, \ D \in \mathrm{Der}(\mathfrak{g})\}$ is the Lie algebra of $\R_{>0}\mathrm{Aut}(\mathfrak{g})$.
The slice representation at $\langle , \rangle$ is given by $g.\theta := \theta(g^{-1}, g^{-1})$, where $g \in \mathrm{Aut}(\mathfrak{g}) \cap \mathrm{O}(\langle , \rangle)$, and $\theta \in (T_{\langle , \rangle}\R_{>0}\mathrm{Aut}(\mathfrak{g}).\langle , \rangle)^{\perp}$. 
In conclusion, we have the following linear algebraic sufficient condition for left-invariant metrics to be maximal:

\begin{prop}
  \label{prop:0803132739}
  Let $\mathfrak{g}$ be an $n$-dimensional Lie algebra, 
  $\mathcal{B} = \{v_{1}, v_{2}, \ldots , v_{n}\}$ be a basis of $\mathfrak{g}$,
  and $G$ be the simply connected Lie group with the Lie algebra $\mathfrak{g}$.
  If $\mathfrak{g}$ and $\mathcal{B}$ satisfy
\begin{equation}
  \label{eq:0803132915}
      \left\{
    \theta \in \mathrm{sym}(\mathfrak{g}) \relmiddle|
    \begin{array}{l}
      (i) \ \forall g \in \mathrm{Aut}(\mathfrak{g}) \cap O(n), \ \theta(g\cdot, g\cdot) = \theta(\cdot, \cdot),\\
      (ii) \  \forall A \in \R \oplus \mathrm{Der}(\mathfrak{g}),\  \sum_{i}\theta(v_{i},A v_{i}) = 0
    \end{array}
    \right\} = \{0\},
\end{equation}
then the left-invariant metric $\langle, \rangle$ on $G$ with orthonormal frame $\mathcal{B}$ is maximal.
\end{prop}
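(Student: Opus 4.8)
The plan is to obtain Proposition~\ref{prop:0803132739} as an immediate consequence of Theorem~\ref{theo:0910055521} together with the slice-representation criterion in Proposition~\ref{prop:0728212900}; most of the work has in fact already been carried out in the discussion preceding the statement, and what remains is to match the linear-algebraic condition (\ref{eq:0803132915}) with the assertion ``the slice representation has no nonzero fixed normal vectors''. By Theorem~\ref{theo:0910055521} it suffices to show that the orbit $\R_{>0}\mathrm{Aut}(\mathfrak{g}).\langle,\rangle$ is an isolated orbit in $\mathfrak{m}(\mathfrak{g})$. Recall from Lemma~\ref{lemm:0910060429} that $(\mathfrak{m}(\mathfrak{g}),(,))$ is a Hadamard manifold, hence a complete connected Riemannian manifold, and that $\R_{>0}\mathrm{Aut}(\mathfrak{g})$ is a closed subgroup of $\mathrm{Isom}(\mathfrak{m}(\mathfrak{g}),(,))$; thus Proposition~\ref{prop:0728212900} applies to the $\R_{>0}\mathrm{Aut}(\mathfrak{g})$-action at the point $\langle,\rangle \in \mathfrak{m}(\mathfrak{g})$.

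First I would recall the description of the normal space of the orbit. Since $\mathfrak{m}(\mathfrak{g})$ is open in $\mathrm{sym}(\mathfrak{g})$, the tangent space at $\langle,\rangle$ is $\mathrm{sym}(\mathfrak{g})$, and differentiating the $\R_{>0}\mathrm{Aut}(\mathfrak{g})$-action shows that the tangent space to the orbit is spanned by the forms $(x,y)\mapsto -\langle Ax,y\rangle-\langle x,Ay\rangle$ with $A$ ranging over $\R\oplus\mathrm{Der}(\mathfrak{g})$, the Lie algebra of $\R_{>0}\mathrm{Aut}(\mathfrak{g})$. Taking the $(,)_{\langle,\rangle}$-orthogonal complement and expanding in the orthonormal basis $\mathcal{B}$ recovers exactly the normal space (\ref{eq:0728212928}), that is, condition $(ii)$ of (\ref{eq:0803132915}).

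Next I would identify the fixed vectors of the slice representation. By Lemma~\ref{lemm:0909002910} the stabilizer of $\langle,\rangle$ under $\R_{>0}\mathrm{Aut}(\mathfrak{g})$ is $\mathrm{Aut}(\mathfrak{g})\cap O(\langle,\rangle)$, which under the identification induced by the orthonormal frame $\mathcal{B}$ becomes $\mathrm{Aut}(\mathfrak{g})\cap O(n)$; the slice representation acts on a normal vector $\theta$ by $g.\theta=\theta(g^{-1}\cdot,g^{-1}\cdot)$, so $\theta$ is a fixed normal vector precisely when it lies in (\ref{eq:0728212928}) and in addition satisfies $\theta(g\cdot,g\cdot)=\theta(\cdot,\cdot)$ for all $g\in\mathrm{Aut}(\mathfrak{g})\cap O(n)$ --- that is, precisely when $\theta$ belongs to the set on the left of (\ref{eq:0803132915}). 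The hypothesis says this set is $\{0\}$, so the slice representation has no nonzero fixed normal vectors; Proposition~\ref{prop:0728212900} then yields that $\R_{>0}\mathrm{Aut}(\mathfrak{g}).\langle,\rangle$ is an isolated orbit, and Theorem~\ref{theo:0910055521} concludes that $\langle,\rangle$ is maximal.

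I do not expect a genuine obstacle here: the argument is a bookkeeping exercise that stitches together results already established in Sections~3 and~4. The one point deserving care is the last step, where one must be precise about the identification induced by $\mathcal{B}$ so that $O(\langle,\rangle)$ is turned into the standard $O(n)$ appearing in $(i)$, and about the fact that a normal vector is fixed by the slice representation if and only if it satisfies $(i)$ and $(ii)$ simultaneously, not either condition alone.
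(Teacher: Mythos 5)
Your proposal is correct and follows exactly the route the paper takes: the paper presents Proposition~\ref{prop:0803132739} as the conclusion of the discussion immediately preceding it, namely identifying condition $(ii)$ of (\ref{eq:0803132915}) with the normal space (\ref{eq:0728212928}), condition $(i)$ with invariance under the stabilizer $\mathrm{Aut}(\mathfrak{g})\cap O(\langle,\rangle)$ from Lemma~\ref{lemm:0909002910}, and then applying Proposition~\ref{prop:0728212900} and Theorem~\ref{theo:0910055521}. Your added care about the identification induced by $\mathcal{B}$ and about requiring $(i)$ and $(ii)$ simultaneously is exactly the right bookkeeping and matches the paper's intent.
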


To find a Lie algebra with a basis which satisfies  (\ref{eq:0803132915}),
let us introduce the following notion:

\begin{defi}
  \label{defi:0728155920}
  Let $\mathfrak{g}$ be an $n$-dimensional Lie algebra.
  A basis $\mathcal{B} \subset \mathfrak{g}$ is called a \textit{$2$-reversible basis} if
  for any $v, w \in \mathcal{B}$ with $v \neq w$, there exists $g \in \mathrm{Aut}(\mathfrak{g}) \cap O(n)$
  and $a \in \{-1,1\}$ such that $gv = av$ and $gw = -aw$.
\end{defi}

For a $2$-reversible basis $\mathcal{B}$ on $\mathfrak{g}$, and
an $\mathrm{Aut}(\mathfrak{g}) \cap \mathrm{O}(n)$-invariant bilinear form $\theta : \mathfrak{g} \times \mathfrak{g} \to \R$,
  one has
    \begin{equation*}
    \theta(v,w) = \theta(gv,gw) = \theta(av,-aw) = -\theta(v,w) \quad (v,w \in \mathcal{B}, \ v \neq w).
  \end{equation*}
  This yields that 
\begin{lemm}
  \label{lemm:0721175133}
For a 2-reversible basis $\mathcal{B} \subset \mathfrak{g}$,
any $\mathrm{Aut}(\mathfrak{g}) \cap O(n)$-invariant bilinear form $\theta : \mathfrak{g} \times \mathfrak{g} \to \R$ is diagonal with respect to $\mathcal{B}$.
\end{lemm}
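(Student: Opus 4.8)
The plan is to verify the claim directly from the definitions. A bilinear form $\theta$ is \emph{diagonal with respect to} $\mathcal{B} = \{v_{1},\dots,v_{n}\}$ precisely when $\theta(v_{i},v_{j}) = 0$ for all $i \neq j$, i.e.\ the Gram matrix $(\theta(v_{i},v_{j}))_{i,j}$ has no nonzero off-diagonal entries, so it suffices to show that every such off-diagonal value vanishes.

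First I would fix two distinct basis vectors $v, w \in \mathcal{B}$ and apply Definition~\ref{defi:0728155920}: since $\mathcal{B}$ is $2$-reversible, there exist $g \in \mathrm{Aut}(\mathfrak{g}) \cap O(n)$ and $a \in \{-1,1\}$ with $gv = av$ and $gw = -aw$. Using the $\mathrm{Aut}(\mathfrak{g}) \cap O(n)$-invariance of $\theta$ together with bilinearity, one computes
\[
  \theta(v,w) = \theta(gv, gw) = \theta(av, -aw) = -a^{2}\,\theta(v,w) = -\theta(v,w),
\]
hence $2\theta(v,w) = 0$, that is, $\theta(v,w) = 0$. Since $v \neq w$ were arbitrary elements of $\mathcal{B}$, all off-diagonal entries of $\theta$ in the basis $\mathcal{B}$ vanish, which is exactly the assertion. (This is the computation already displayed just before the lemma statement; the lemma merely records its conclusion.)

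There is essentially no obstacle here: the argument is a one-line calculation once the definition of a $2$-reversible basis is unwound. The only point requiring a little care is the bookkeeping — the automorphism $g$ and the sign $a$ depend on the chosen pair $(v,w)$, but since $a^{2} = 1$ in every case the cancellation is uniform — and I would note that no symmetry hypothesis on $\theta$ is used, so the statement applies in particular to the symmetric forms $\theta \in \mathrm{sym}(\mathfrak{g})$ relevant to condition $(i)$ of Proposition~\ref{prop:0803132739}.
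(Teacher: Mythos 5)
Your proof is correct and is exactly the argument the paper gives (the displayed computation immediately preceding the lemma): unwind the definition of a $2$-reversible basis to get $\theta(v,w) = \theta(av,-aw) = -a^{2}\theta(v,w) = -\theta(v,w)$ for distinct $v,w \in \mathcal{B}$, hence $\theta(v,w)=0$. No issues.
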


For examples, the Ricci tensor $\mathrm{Ric} : \mathfrak{g} \times \mathfrak{g} \to \R$ is $\mathrm{Aut}(\mathfrak{g}) \cap O(n)$-invariant
bilinear form.
Hence a $2$-reversible basis $\mathcal{B}$ is a  special case of a Ricci diagonal basis.

\subsection{almost abelian Lie groups with diagonal extension}
\label{20220802165433}

In this subsection, we prove Theorem~\ref{theo:1012173304}.
For $(w_{2}, w_{3}, \ldots , w_{n}) \in \R^{n-1}$, we consider a Lie group structure $\ast$ on $\R^{n}$ by
\begin{equation*}
  (x_{1}, x_{2}, \ldots,  x_{n}) \ast (y_{1}, y_{2}, \ldots, y_{n}) := (x_{1} + y_{1}, x_{2} + e^{w_{2}x_{1}}y_{2}, \ldots , x_{n} + e^{w_{n}x_{1}}y_{n}).
\end{equation*}
Namely, $(\R^{n}, \ast)$ is a semi-direct product $\R \ltimes \R^{n-1}$ whose action of $\R$ on $\R^{n-1}$ is given by
\begin{equation*}
  \R \to \mathrm{GL}(n-1,\R), \qquad t \mapsto \mathrm{diag}(e^{w_{2}t},e^{w_{3}t},\ldots , e^{w_{n}t}).
\end{equation*}
Lie groups with codimension one abelian normal subgroups such as $(\R^{n}, \ast)$ above are called \textit{almost abelian Lie groups}.

Define vector fields $v_{1}, v_{2}, \ldots , v_{n}$ on $\R^{n} := \{(x_{1}, x_{2}, \ldots , x_{n}) \mid x_{i} \in \R\}$ by
\begin{equation}
  \label{eq:0810133022}
  v_{1} := \frac{\partial}{\partial x_{1}}, \quad v_{2} := e^{w_{2}x_{1}}\frac{\partial}{\partial x_{2}}, \quad \ldots,  \quad  v_{n} := e^{w_{n}x_{1}} \frac{\partial}{\partial x_{n}}.
\end{equation}
One can see that the vector fields $v_{1}, v_{2}, \ldots , v_{n}$ are left-invariant with respect to the Lie group structure $\ast$.
Note that $\mathcal{B}_{w} := \{v_{1}, v_{2}, \ldots , v_{n}\}$ is a basis of the Lie algebra of $(\R^{n},\ast)$.
The non-zero bracket relations of the Lie algebra $\mathfrak{s}_{w} := \mathrm{span}\{v_{1}, v_{2},\ldots , v_{n}\}$ are given by
\begin{equation}
  \label{eq:0902165034}
  [v_{1},v_{2}] = w_{2}v_{2}, \quad  [v_{1},v_{3}] = w_{3}v_{3}, \quad \ldots, \quad [v_{1}, v_{n}] = w_{n}v_{n}.
\end{equation}

For $w \in \R^{n-1}$,
denote by $g_{w} \in \mathfrak{M}(\R^{n})$ the left-invariant metric on $(\R^{n},\ast)$ with the orthonormal frame $\mathcal{B}_{w}$.
The metric $g_{w}$ is explicitly given by
\begin{equation}
  \label{eq:0902162104}
      g_{w} := (dx_{1})^{2} + e^{-2w_{2}x_{1}} (dx_{2})^{2} + \cdots  + e^{-2w_{n}x_{1}} (dx_{n})^{2},
\end{equation}
which is appeared in Theorem~\ref{theo:1012173304}.
By the result of Lauret in \cite{MR2770554},
one can easily see that $g_{w}$ is a left-invariant Ricci soliton on the simply connected solvable Lie group $(\R^{n},\ast)$.
Now we are in the position to prove Theorem~\ref{theo:1012173304}



\begin{proof}[Proof of Theorem~\ref{theo:1012173304}]
  \label{proof:0902163643}
  Take any $w := (w_{2},w_{3}, \ldots , w_{n})$.
  We firstly show that the basis $\mathcal{B}_{w}\subset \mathfrak{s}_{w}$
  is a $2$-reversible basis.
  Take any $v_{i}, v_{j} \in \mathcal{B}_{w}$ with $i < j$.
   Let $\varphi \in \mathrm{GL}(n,\R)$ be a diagonal matrix whose $(j,j)$-element is $-1$ and the others are $1$.
   Note that $j > 1$.
  By the bracket relations (\ref{eq:0902165034}), one can see that $\varphi \in \mathrm{Aut}(\mathfrak{s}_{w}) \cap O(n)$, and one has
  $\varphi v_{i} = v_{i}$ and $\varphi v_{j} = -v_{j}$.

  Now we have only to show that
  the Lie algebra $\mathfrak{s}_{w}$ with the basis $\mathcal{B}_{w}$ satisfies
  the condition (\ref{eq:0803132915}) in Proposition~\ref{prop:0803132739}.  
  Take any $\theta \in \mathrm{sym}(\mathfrak{s}_{w})$ which satisfies (i) and (ii) in (\ref{eq:0803132915}).
  By Lemma~\ref{lemm:0721175133},
  $\theta$ is diagonal with respect to $\mathcal{B}_{w}$.
  Hence we have only to show that $\theta(v_{i}, v_{i}) = 0$ for each $i \in \{1, 2,\ldots ,n\}$.
  Take any $i \in \{1, 2,\ldots ,n\}$.
  By the bracket relations (\ref{eq:0902165034}), one can see that
  \begin{equation*}
    E_{22}, E_{33}, \ldots , E_{nn} \in \mathrm{Der}(\mathfrak{s}_{w}),
  \end{equation*}
  where $E_{pq} \in \mathfrak{gl}(\mathfrak{s}_{w})$ is
  a matrix whose $(p,q)$-entry is $1$, and the others are $0$.
  Note that $E_{11} = \mathrm{id}_{\mathfrak{s}_{w}} - E_{22} - E_{33} - \cdots - E_{nn} \in \R \oplus \mathrm{Der}(\mathfrak{s}_{w})$.
  This yields that $E_{ii} \in \R \oplus \mathrm{Der}(\mathfrak{s}_{w})$.
  By the condition (ii), one has
  \begin{equation*}
    0 = \sum_{j}\theta(v_{j},E_{ii}v_{j}) = \theta(v_{i}, v_{i}).
  \end{equation*}
  This concludes that $\theta = 0$.

  We show the last assertion.
  Denote by $\alpha_{w} := w_{2} + w_{3} + \cdots + w_{n}$.  
  Then one can see that the Ricci tensor of $g_{w}$ is given by
  \begin{equation}
    \label{eq:0721145841}
        \mathrm{Ric}(g_{w}) = \mathrm{diag}(-|w|^{2}, -w_{2}\alpha_{w},-w_{3}\alpha_{w}, \ldots , -w_{n}\alpha_{w}).
  \end{equation}
  This concludes that if $w_{i} \neq w_{j}$ for some $i,j$ then $g_{w}$ is not isotropy irreducible since $g_{w}$ is not Einstein.
\end{proof}

\begin{rem}
  \label{rem:0902164554}
  The equation (\ref{eq:0721145841}) yields that, if $n \geq 3$, $g_{w}$ gives continuous family of maximal metrics such that $g_{w} \nsim g_{w^{\prime}}$ for generic $(w, w^{\prime}) \in \R^{n-1} \times \R^{n-1}$.
\end{rem}

\subsection{nilpotent Lie algebras attached with edge transitive graphs}
\label{20220802165049}

In this subsection, we prove Theorem~\ref{theo:1207153743}.
Firstly, let us give a review on the $2$-step nilpotent Lie algebras attached to graphs, introduced in \cite{MR2140439}.
Let $\mathcal{V}$ be a finite set, and denote by $\mathfrak{P}_{2}^{\mathcal{V}} := \{e \subset \mathcal{V} \mid \sharp e = 2\}$.
Then, for a subset $\mathcal{E} \subset \mathfrak{P}_{2}^{\mathcal{V}}$, the pair $\mathcal{G} := (\mathcal{V},\mathcal{E})$ is called a \textit{(finite) simple graph}
with vertices $\mathcal{V}$ and edges $\mathcal{E}$.
For a simple graph $\mathcal{G} = (\mathcal{V},\mathcal{E})$, a map $d : \mathcal{E} \to \mathcal{V}$ is called a \textit{direction} of $\mathcal{G}$ if $d(e) \in e$
for all $e \in \mathcal{E}$.
Namely, a direction $d$ determines a starting point for each edge.
Also, denote by $d^{\ast} : \mathcal{E} \to \mathcal{V}$ the opposite direction of $d$ (\textit{i.e.} $\{d(e), d^{\ast}(e)\} = e$ for each $e \in \mathcal{E}$).
A simple graph $\mathcal{G}$ with a direction $\mathcal{G}_{d} := (\mathcal{G},d)$ is called a \textit{directed simple graph}. 

\begin{defi}
  \label{defi:0728145236}
For a directed simple graph $\mathcal{G}_{d} = (\mathcal{G},d)$ with vertices $\mathcal{V}$ and edges $\mathcal{E}$,
one can define a $2$-step nilpotent Lie algebra structure $[,]$ on $\mathrm{span}_{\R}(\mathcal{V} \cup \mathcal{E})$ as follows:
\begin{equation*}
  [d(e),d^{\ast}(e)] := e, \ [e, \cdot] := 0 \ \  (e \in \mathcal{E}), \quad [v,w] := 0 \ \  (\{v,w\} \notin \mathcal{E}).
\end{equation*}
We call the Lie algebra $\mathfrak{n}_{\mathcal{G}_{d}} := (\mathrm{span}_{\R}(\mathcal{V} \cup \mathcal{E}),[,])$
the \textit{$2$-step nilpotent Lie algebra attached with $\mathcal{G}$}.
\end{defi}

Note that $\mathfrak{n}_{\mathcal{G}_{d}}$ is abelian if and only if $\mathcal{E} = \emptyset$.
In the latter argument, we always assume that $\mathcal{E} \neq \emptyset$.

\begin{prop}
  \label{prop:0728145446}
  For any directed graph $\mathcal{G}_{d} = (\mathcal{V},\mathcal{E},d)$,
  the basis $\mathcal{V} \cup \mathcal{E} \subset \mathfrak{n}_{\mathcal{G}_{d}}$ is a $2$-reversible basis.
\end{prop}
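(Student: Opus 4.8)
The plan is to show that for any two distinct elements of the basis $\mathcal{V} \cup \mathcal{E}$, one can find an automorphism in $\mathrm{Aut}(\mathfrak{n}_{\mathcal{G}_d}) \cap O(n)$ (where $n = \sharp\mathcal{V} + \sharp\mathcal{E}$) that negates exactly one of them and fixes the other, as required by Definition~\ref{defi:0728155920}. The natural candidates are \emph{sign-change automorphisms}: given a function $\varepsilon : \mathcal{V} \cup \mathcal{E} \to \{-1,1\}$, define $\varphi_\varepsilon$ to act on each basis vector $b$ by $\varphi_\varepsilon(b) = \varepsilon(b)\, b$. This is automatically orthogonal with respect to the inner product making $\mathcal{V} \cup \mathcal{E}$ orthonormal, so the only issue is to pin down exactly which $\varepsilon$ give Lie algebra automorphisms, and then to check that enough such $\varepsilon$ exist to separate any pair of basis vectors.

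First I would work out the automorphism condition. Since the only nonzero brackets are $[d(e), d^\ast(e)] = e$ for $e \in \mathcal{E}$, the map $\varphi_\varepsilon$ is an automorphism if and only if it is compatible with each such relation, i.e.\ $\varepsilon(d(e))\,\varepsilon(d^\ast(e)) = \varepsilon(e)$ for every edge $e$; all other bracket relations are trivially preserved because the relevant brackets vanish and $\varphi_\varepsilon$ is diagonal. So the admissible sign functions are precisely those satisfying this ``coboundary-type'' constraint on $\mathcal{E}$: once $\varepsilon$ is chosen freely on $\mathcal{V}$, its values on $\mathcal{E}$ are forced by $\varepsilon(e) = \varepsilon(v)\varepsilon(w)$ where $\{v,w\} = e$, and one checks this is well defined (independent of the direction $d$, since $\varepsilon$ takes values in $\{\pm1\}$).

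Next I would use this freedom to handle the three cases for a pair of distinct basis vectors $b \neq b'$. Case (1): $b = v$, $b' = v'$ are two vertices. Choose $\varepsilon$ to be $-1$ at $v'$ and $+1$ at all other vertices; then $\varepsilon(v) = 1$, $\varepsilon(v') = -1$, and one takes $a = 1$ in the definition. Case (2): $b = v$ is a vertex, $b' = e'$ is an edge. If $v \notin e'$, pick $\varepsilon \equiv 1$ on all vertices except set $\varepsilon$ to $-1$ on \emph{both} endpoints of $e'$; then $\varepsilon(v) = 1$ but $\varepsilon(e') = (-1)(-1)$... so instead set $\varepsilon$ to $-1$ on exactly one endpoint of $e'$ and on $v$ if $v$ is not that endpoint — more cleanly: choose $\varepsilon(v) = -1$ and $\varepsilon = +1$ elsewhere on $\mathcal{V}$; then $\varepsilon(v) = -1$, and $\varepsilon(e') = 1$ provided $v \notin e'$, while if $v \in e'$ one instead flips the sign at the \emph{other} endpoint of $e'$ so that $\varepsilon(v) = 1$ and $\varepsilon(e') = -1$. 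One verifies a valid choice always exists. Case (3): $b = e$, $b' = e'$ are two edges. Since $e \neq e'$ as $2$-element subsets, there is a vertex $v_0$ lying in exactly one of them, say $v_0 \in e'$, $v_0 \notin e$; set $\varepsilon(v_0) = -1$ and $\varepsilon = +1$ on all other vertices. Then $\varepsilon(e) = 1$ and $\varepsilon(e') = -1$, so $a = 1$ works.

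The only genuine subtlety — and the step I would write most carefully — is the bookkeeping in Case (2) when $v \in e'$: one must confirm that flipping the sign at the opposite endpoint $w$ of $e'$ leaves $\varepsilon(v) = 1$ (true, since $w \neq v$) and gives $\varepsilon(e') = \varepsilon(v)\varepsilon(w) = -1$, and that this $\varphi_\varepsilon$ is indeed an automorphism by the coboundary criterion; all other cases are immediate. There is no real obstacle here: the $2$-step attached-to-graph structure is rigid enough that diagonal sign automorphisms are abundant, and the combinatorial separation of any pair of basis vectors by a single vertex sign flip is elementary. I would conclude that $\mathcal{V} \cup \mathcal{E}$ is a $2$-reversible basis, which is exactly the claim of Proposition~\ref{prop:0728145446}.
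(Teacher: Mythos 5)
Your proposal is correct and is essentially the paper's own argument: the paper's automorphisms $r_{v}$ (negate the vertex $v$ and every edge containing $v$) are exactly your sign-change maps $\varphi_{\varepsilon}$ with $\varepsilon=-1$ at the single vertex $v$, and your three-case separation of basis vectors matches the paper's. The only cosmetic difference is that the paper handles your Case (2) uniformly by flipping at a vertex $v'\in e'\setminus\{v\}$, avoiding the sub-case bookkeeping you describe.
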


\begin{proof}
  \label{proof:0712153240}

  For $v \in \mathcal{V}$,
  define $r_{v} \in \mathrm{Aut}(\mathfrak{n}_{\mathcal{G}_{d}}) \cap O(p+q)$ by
  \begin{equation*}
    r_{v}(v) := -v, \quad r_{v}(v^{\prime}) := v^{\prime} \ (v^{\prime} \in \mathcal{V}\setminus \{v\}), \quad
    r_{v}(e) := \left\{
      \begin{array}{ll}
        -e & (v \in e),\\
        e & (v \notin e).
      \end{array}
    \right.
  \end{equation*}
  We show that $\{r_{v} \mid v \in \mathcal{V}\}$ reflects the basis $\mathcal{V} \cup \mathcal{E}$.

  Firstly, for $v ,v^{\prime} \in \mathcal{V}$ with $v \neq v^{\prime}$,
  one has $r_{v}(v) = -v$ and $r_{v}(v^{\prime}) = v^{\prime}$.
  
  Secondly, for $v \in \mathcal{V}$, and $e \in \mathcal{E}$,
  put $v^{\prime} \in e \setminus \{v\}$.
  Then one has $r_{v^{\prime}}(v) = v$ and $r_{v^{\prime}}(e) = -e$.

  Lastly, for any $e, e^{\prime} \in \mathcal{E}$ with $e \neq e^{\prime}$,
  there exists a vertex $v \in e\setminus e^{\prime}$. 
  Then one has $r_{v}(e) = -e$ and $r_{v}(e^{\prime}) = e^{\prime}$.
\end{proof}

The following lemma asserts that invariant normal vectors $\theta \in (T_{\langle, \rangle_{\mathcal{G}}}\R_{>0}\mathrm{Aut}(\mathfrak{n}_{\mathcal{G}_{d}}).\langle, \rangle_{\mathcal{G}})^{\perp}$
are completely determined by the diagonal parts $\theta(e,e)$ along the edges $e \in \mathcal{E}$.

\begin{lemm}
  \label{lemm:0810134701}
  Let $\mathcal{G}_{d} = (\mathcal{G},d)$ be a directed simple graph.
  Take any $\theta \in \mathrm{sym}(\mathfrak{n}_{\mathcal{G}_{d}})$ which satisfies the condition (ii) in (\ref{eq:0803132915}).
  Then one has
  \begin{enumerate}
    [$(1)$]
  \item $\theta(v,v) = -\sum_{e \ni v}\theta(e,e)$ for all $v \in \mathcal{V}$,
  \item $\sum_{e \in \mathcal{E}}\theta(e,e) = 0$,    
  \end{enumerate}
\end{lemm}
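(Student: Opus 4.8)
The plan is to extract both identities by substituting well-chosen elements of $\R\oplus\mathrm{Der}(\mathfrak{n}_{\mathcal{G}_d})$ into condition (ii) of (\ref{eq:0803132915}) and evaluating against the orthonormal basis $\mathcal{V}\cup\mathcal{E}$ of $\mathfrak{n}_{\mathcal{G}_d}$.

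For (1) I would fix a vertex $v\in\mathcal{V}$ and introduce the linear endomorphism $D_v$ of $\mathfrak{n}_{\mathcal{G}_d}$ determined on the basis by $D_v(v)=v$, $D_v(v')=0$ for $v'\in\mathcal{V}\setminus\{v\}$, and $D_v(e)=e$ if $v\in e$, $D_v(e)=0$ if $v\notin e$. The first step is to check $D_v\in\mathrm{Der}(\mathfrak{n}_{\mathcal{G}_d})$: since the only nonvanishing brackets are $[d(e),d^{\ast}(e)]=e$, it suffices to verify the Leibniz rule on these, and this comes down to the observation that $D_v$ scales $e$ by $1$ exactly when $v\in e$, i.e. exactly when $v$ is one of the two endpoints $d(e),d^{\ast}(e)$, in which case the right-hand side $[D_v d(e),d^{\ast}(e)]+[d(e),D_v d^{\ast}(e)]$ picks up precisely one copy of $e$; when $v\notin e$ both terms vanish. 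Plugging $A=D_v$ into (ii), the only surviving vertex term is $\theta(v,v)$ and the surviving edge terms are those with $v\in e$, so $\theta(v,v)+\sum_{e\ni v}\theta(e,e)=0$, which is (1).

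For (2) I would take $A=\mathrm{id}_{\mathfrak{n}_{\mathcal{G}_d}}\in\R\oplus\mathrm{Der}(\mathfrak{n}_{\mathcal{G}_d})$ in (ii), obtaining $\sum_{v\in\mathcal{V}}\theta(v,v)+\sum_{e\in\mathcal{E}}\theta(e,e)=0$. Substituting (1) and interchanging the order of summation, $\sum_{v\in\mathcal{V}}\theta(v,v)=-\sum_{v\in\mathcal{V}}\sum_{e\ni v}\theta(e,e)=-\sum_{e\in\mathcal{E}}\sum_{v\in e}\theta(e,e)=-2\sum_{e\in\mathcal{E}}\theta(e,e)$, since each edge has exactly two endpoints; combining the two relations gives $\sum_{e\in\mathcal{E}}\theta(e,e)=0$. (Alternatively, one checks that the grading derivation $\delta$ with $\delta|_{\mathcal{V}}=\mathrm{id}$ and $\delta|_{\mathcal{E}}=2\,\mathrm{id}$ lies in $\mathrm{Der}(\mathfrak{n}_{\mathcal{G}_d})$, so $\delta-\mathrm{id}_{\mathfrak{n}_{\mathcal{G}_d}}\in\R\oplus\mathrm{Der}(\mathfrak{n}_{\mathcal{G}_d})$ vanishes on $\mathcal{V}$ and restricts to the identity on $\mathcal{E}$; feeding this into (ii) yields (2) directly, independently of (1).)

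I do not anticipate a genuine obstacle here: the whole argument is a finite bookkeeping exercise, and the only step requiring a moment's care is confirming that $D_v$ (and, in the alternative, $\delta$) is a derivation, which follows immediately from the combinatorial description of the bracket in Definition~\ref{defi:0728145236}.
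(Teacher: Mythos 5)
Your proof is correct and takes essentially the same approach as the paper: part (1) uses exactly the derivation $D_{v}$ the paper uses, and for part (2) your parenthetical alternative (the grading derivation $\delta$ with $\delta|_{\mathcal{V}}=\mathrm{id}$, $\delta|_{\mathcal{E}}=2\,\mathrm{id}$, minus the identity) is precisely the paper's argument. Your primary route to (2), via $A=\mathrm{id}$ together with (1) and the handshake-lemma double count, is an equally valid minor variant.
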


\begin{proof}
  \label{proof:0810134848}
We firstly prove (1).
Take any $v \in \mathcal{V}$.
Define $D_{v} \in \mathrm{Der}(\mathfrak{n}_{\mathcal{G}_{d}})$ by
  \begin{equation*}
    D_{v}(v^{\prime}) :=
    \begin{cases}
      v \quad (v^{\prime} = v)\\
      0 \quad (v^{\prime} \neq v)
    \end{cases}
    \ (v^{\prime} \in \mathcal{V}),
    \quad
    D_{v}(e) :=
    \begin{cases}
      e \quad (v \in e)\\
      0 \quad (v \notin e)
    \end{cases}
    \ (e \in \mathcal{E}).
  \end{equation*}
  Since $\theta$ is a normal vector at $\langle , \rangle_{\mathcal{G}}$, one has $\sum_{x \in \mathcal{V} \cup \mathcal{E}} \theta(x,D_{v}x) = 0$ (see~(\ref{eq:0803132915})).
  This yields that
  \begin{align*}
    \label{align*:0406104714}
    0 = \sum_{x \in \mathcal{V} \cup \mathcal{E}} \theta(x,D_{v}x)
      = \sum_{v^{\prime} \in \mathcal{V}} \theta(v^{\prime},D_{v}v^{\prime}) + \sum_{e \in \mathcal{E}} \theta(e,D_{v}e)
      = \theta(v,v) + \sum_{e \ni v} \theta(e,e).
  \end{align*}
  This concludes that $\theta(v,v) = - \sum_{e \ni v} \theta(e,e)$.

  We secondly prove the assertion (2).
  Define $D \in \mathrm{Der}(\mathfrak{n}_{\mathcal{G}_{d}})$ by
  \begin{equation*}
    D(v) = v, \quad D(e) = 2e \qquad (v \in \mathcal{V}, \ e \in \mathcal{E}).
  \end{equation*}
  Then one has $D - I \in \R \oplus \mathrm{Der}(\mathfrak{n}_{\mathcal{G}_{d}})$. Note that
    \begin{equation*}
    (D-I)(v) = 0, \quad (D-I)(e) = e \qquad (v \in \mathcal{V}, \ e \in \mathcal{E}).
  \end{equation*}
  This yields that
  \begin{equation*}
    0 = \sum_{x \in \mathcal{V} \cup \mathcal{E}} \theta(x,(D - I)x) =  \sum_{e \in \mathcal{E}}\theta(e,e),
  \end{equation*}
  which completes the proof.
\end{proof}

For two finite simple graphs $\mathcal{G} = (\mathcal{V},\mathcal{E})$ and $\mathcal{G}^{\prime} = (\mathcal{V}^{\prime},\mathcal{E}^{\prime})$,
the bijective map $\sigma : \mathcal{V} \cup \mathcal{E} \to \mathcal{V}^{\prime} \cup \mathcal{E}^{\prime}$ is called an \textit{isomorphism}
between $\mathcal{G}$ and $\mathcal{G}^{\prime}$ if $\sigma$ satisfies $\sigma(\mathcal{V}) = \mathcal{V}^{\prime}$ and $\sigma(\{v,w\}) = \{\sigma(v),\sigma(w)\}$ for each $\{v,w\} \in \mathcal{E}$.
Denote by $\mathrm{Aut}(\mathcal{G})$ the group of self-isomorphisms of the graph $\mathcal{G}$.

For each $\sigma \in \mathrm{Aut}(\mathcal{G})$, one can define 
  $\tilde{\sigma} \in \mathrm{Aut}(\mathfrak{n}_{\mathcal{G}_{d}})$
  as follows:
  \begin{equation}
    \label{eq:0712225327}
    \tilde{\sigma}(v) := \sigma(v), \quad \tilde{\sigma}(e) :=
  \left\{
  \begin{array}{ll}
    \sigma(e) & (\mbox{if $\sigma(d(e)) = d(\sigma(e))$}), \\
    -\sigma(e) & (\mbox{if $\sigma(d(e)) \neq d(\sigma(e))$}),
  \end{array}
  \right.
  \end{equation}
where $v \in \mathcal{V}$, and $e \in \mathcal{E}$.
Then one has $\tilde{\sigma} \in \mathrm{Aut}(\mathfrak{n}_{\mathcal{G}_{d}}) \cap \mathrm{O}(\langle , \rangle_{\mathcal{G}})$.
Namely, the automorphism group  $\mathrm{Aut}(\mathcal{G})$ of the ``undirected'' graph $\mathcal{G}$ acts on
the normal space
$(T_{\langle, \rangle_{\mathcal{G}}}\R_{>0}\mathrm{Aut}(\mathfrak{n}_{\mathcal{G}_{d}}).\langle, \rangle_{\mathcal{G}})^{\perp}$
by the slice representation.

A simple graph $\mathcal{G}$ is called an edge-transitive graph if the automorphism group $\mathrm{Aut}(\mathcal{G})$ acts on $\mathcal{E}$ transitively.
For more informations about edge-transitive graphs, for examples, see \cite{MR4041267}.
It has been known that the left-invariant metric $\langle, \rangle_{\mathcal{G}}$ is a Ricci soliton if and only if the graph $\mathcal{G}$ is positive (\cite{MR2785768}).
One can easily see that edge-transitive graphs are positive, and hence $\langle, \rangle_{\mathcal{G}}$ for edge-transitive $\mathcal{G}$ are Ricci soliton.

For a directed simple graph $\mathcal{G}_{d}$, denote by $N_{\mathcal{G}_{d}}$ the simply connected nilpotent Lie group with Lie algebra $\mathfrak{n}_{\mathcal{G}_{d}}$.
Note that, if $\mathcal{G} = (\mathcal{V}, \mathcal{E})$ has $p$ vertices and $q$ edges, then $N_{\mathcal{G}_{d}}$ is diffeomorphic to $\R^{p+q}$ via the basis $\mathcal{V} \cup \mathcal{E}$.
Now we are in the position to prove Theorem~\ref{theo:1207153743}.
We show the following precise version of the Theorem~\ref{theo:1207153743}.

\begin{theo}
  \label{theo:0810135716}
  Let $\mathcal{G} = (\mathcal{V},\mathcal{E})$ be an edge-transitive graph.
  Then for any direction $d : \mathcal{E} \to \mathcal{V}$,
  the left-invariant metric $\langle, \rangle \in \mathfrak{M}(N_{\mathcal{G}_{d}})$ with the orthonormal frame $\mathcal{B} := \mathcal{V} \cup \mathcal{E}$ is maximal.
  The metric $\langle, \rangle$ is isotropy irreducible if and only if $\mathcal{E} = \emptyset$.
\end{theo}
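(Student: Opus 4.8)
The plan is to verify the hypothesis of Proposition~\ref{prop:0803132739} for the Lie algebra $\mathfrak{n}_{\mathcal{G}_{d}}$ with the basis $\mathcal{B} = \mathcal{V} \cup \mathcal{E}$, together with the easy curvature computation for the last sentence. First, by Proposition~\ref{prop:0728145446} the basis $\mathcal{B}$ is a $2$-reversible basis, so by Lemma~\ref{lemm:0721175133} every $\theta \in \mathrm{sym}(\mathfrak{n}_{\mathcal{G}_{d}})$ satisfying condition $(i)$ in (\ref{eq:0803132915}) is diagonal with respect to $\mathcal{B}$; it therefore suffices to show that the only such $\theta$ which in addition satisfies condition $(ii)$ is $\theta = 0$. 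So I take such a $\theta$, write $a_{v} := \theta(v,v)$ for $v \in \mathcal{V}$ and $b_{e} := \theta(e,e)$ for $e \in \mathcal{E}$, and invoke Lemma~\ref{lemm:0810134701}: condition $(ii)$ forces $a_{v} = -\sum_{e \ni v} b_{e}$ for all $v$ and $\sum_{e \in \mathcal{E}} b_{e} = 0$. Thus everything is reduced to showing that the numbers $b_{e}$ all vanish.

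To kill the $b_{e}$'s I would exploit the edge-transitivity via condition $(i)$, i.e.\ invariance under $\mathrm{Aut}(\mathfrak{n}_{\mathcal{G}_{d}}) \cap O(p+q)$. For $\sigma \in \mathrm{Aut}(\mathcal{G})$ the lift $\tilde{\sigma}$ defined in (\ref{eq:0712225327}) lies in $\mathrm{Aut}(\mathfrak{n}_{\mathcal{G}_{d}}) \cap O(\langle,\rangle_{\mathcal{G}})$, and since $\tilde{\sigma}(e) = \pm \sigma(e)$ we get $\theta(\sigma(e),\sigma(e)) = \theta(\tilde\sigma(e),\tilde\sigma(e)) = \theta(e,e)$, i.e.\ $b_{\sigma(e)} = b_{e}$ for every edge $e$. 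Because $\mathcal{G}$ is edge-transitive, $\mathrm{Aut}(\mathcal{G})$ acts transitively on $\mathcal{E}$, so all $b_{e}$ are equal to a common value $b$. Combined with $\sum_{e \in \mathcal{E}} b_{e} = 0$ and $\mathcal{E} \neq \emptyset$ this gives $q \cdot b = 0$, hence $b = 0$, hence every $b_{e} = 0$, and then $a_{v} = -\sum_{e \ni v} b_{e} = 0$ as well. Therefore $\theta = 0$, condition (\ref{eq:0803132915}) holds, and Proposition~\ref{prop:0803132739} yields that $\langle,\rangle$ is maximal.

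For the final equivalence: if $\mathcal{E} = \emptyset$ then $\mathfrak{n}_{\mathcal{G}_{d}}$ is abelian, $\langle,\rangle$ is the flat metric on $\R^{p}$, which is symmetric and hence isotropy irreducible. Conversely, if $\mathcal{E} \neq \emptyset$ then $\mathfrak{n}_{\mathcal{G}_{d}}$ is a non-abelian nilpotent Lie algebra, so by Remark~\ref{rem:0714141634} (Wilson's theorem: the isotropy representation at $e$ is the action of $\mathrm{Aut}(\mathfrak{g}) \cap O(\langle,\rangle)$ on $\mathfrak{g}$, which preserves the nontrivial center) the metric is not isotropy irreducible. The main obstacle is the second paragraph — making sure the edge-transitivity is used correctly through the lift $\tilde\sigma$ and that the sign ambiguity in (\ref{eq:0712225327}) is harmless because only the diagonal entries $\theta(e,e)$ enter, which is precisely why reducing to the $b_{e}$ via Lemma~\ref{lemm:0810134701} first is the right move; the rest is bookkeeping.
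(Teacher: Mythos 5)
Your proposal is correct and follows essentially the same route as the paper's own proof: reduce to condition (\ref{eq:0803132915}) via Proposition~\ref{prop:0803132739}, diagonalize $\theta$ using the $2$-reversible basis (Proposition~\ref{prop:0728145446} and Lemma~\ref{lemm:0721175133}), control the diagonal by the edge entries via Lemma~\ref{lemm:0810134701}, and use edge-transitivity through the lifts $\tilde\sigma$ to force all $\theta(e,e)$ to coincide and hence vanish. The treatment of the final equivalence via Remark~\ref{rem:0714141634} also matches the paper.
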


\begin{proof}
  \label{proof:0712173323}
  Take any $\theta \in \mathrm{sym}(\mathfrak{n}_{\mathcal{G}_{d}})$ which satisfies
  \begin{align*}
    \label{align*:0616143618}
    \forall g \in \mathrm{Aut}(\mathfrak{g}) \cap O(n), \ \theta(g\cdot, g\cdot) = \theta(\cdot, \cdot),\
\forall A \in \R \oplus \mathrm{Der}(\mathfrak{g}),\  \sum_{i}\theta(v_{i},A v_{i}) = 0.
  \end{align*}
  By Proposition~\ref{prop:0803132739}, we have only to show that $\theta = 0$.
  By Lemma~\ref{lemm:0721175133},
  $\theta$ is diagonal with respect to $\mathcal{B}$.
  Also, by Lemma~\ref{lemm:0810134701}, the diagonal parts of $\theta$ are completely controlled by the edge parts $\theta(e,e)$.
  Hence we have only to show that $\theta(e,e) = 0$ for all $e \in \mathcal{E}$.

  Assume that $\mathcal{G}$ is edge-transitive.
  For any $e,e^{\prime} \in \mathcal{E}$, there exists $\sigma \in \mathrm{Aut}(\mathcal{G})$ such that $\sigma(e) = e^{\prime}$,
  and hence
  \begin{equation*}
    \theta(e^{\prime}, e^{\prime}) = \theta(\sigma(e),\sigma(e)) = \theta(\pm \tilde{\sigma}(e),\pm \tilde{\sigma}(e)) = \theta(e,e).
  \end{equation*}
  This concludes that the function $e \mapsto \theta(e,e)$ is constant.
  On the other hand, $\sum_{e^{} \in \mathcal{E}} \theta(e,e) = 0$ by Lemma~\ref{lemm:0810134701}.
  Therefore, for each $e \in \mathcal{E}$,
  \begin{equation*}
    0 = \sum_{e \in \mathcal{E}}\theta(e,e) = \sharp \mathcal{E} \cdot \theta(e,e),
  \end{equation*}
  and hence $\theta(e,e) = 0$.

  We have shown the last assertion. See Remark~\ref{rem:0714141634}.
\end{proof}

For a directed graph $\mathcal{G}_{d}$ with $\mathcal{G} = (\mathcal{V},\mathcal{E})$, denote by $\langle, \rangle_{\mathcal{G}_{d}}$ the left-invariant metric on $N_{\mathcal{G}_{d}}$ with the orthonormal frame $\mathcal{V} \cup \mathcal{E}$.
The above arguments yield that if given an edge-transitive graph $\mathcal{G}$ with $p$ vertices and $q$ edges, and a direction $d$, one can obtain maximal metrics  $\langle, \rangle_{\mathcal{G}_{d}}$ on $\R^{p+q}$.
On the other hand, the following states that
directions $d$ do not affect on the resulting left-invariant metric $\langle, \rangle_{\mathcal{G}_{d}}$.

\begin{theo}[\cite{MR3343346, MR661539}]
  \label{theo:0904000546}
  Let $\mathcal{G}_{d}$ and $\mathcal{G}_{d^{\prime}}^{\prime}$ be two directed simple graphs.
  Then the corresponding two simply connected Riemannian Lie groups $(N_{\mathcal{G}_{d}},\langle, \rangle_{\mathcal{G}_{d}})$ and $(N_{\mathcal{G}^{\prime}_{d^{\prime}}},\langle, \rangle_{\mathcal{G}^{\prime}_{d^{\prime}}})$ are isometric if and only if $\mathcal{G}$ and $\mathcal{G}^{\prime}$ are isomorphic as undirected graphs.
\end{theo}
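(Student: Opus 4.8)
The plan is to combine two ingredients: Wilson's rigidity theorem for simply connected nilpotent Lie groups, which reduces ``isometric'' to ``isomorphic as metric Lie algebras'', and a reconstruction argument showing that $(\mathfrak{n}_{\mathcal{G}_{d}}, \langle, \rangle_{\mathcal{G}_{d}})$ remembers the underlying \emph{undirected} graph. Note at the outset that directions are irrelevant at the Lie algebra level: reversing $d$ on a single edge $e$ is realized by the metric automorphism $e \mapsto -e$ fixing every other basis vector, since $[d^{\ast}(e),d(e)] = -e$; so it suffices to track the undirected structure.

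\emph{The ``if'' direction.} Suppose $\sigma : \mathcal{V} \cup \mathcal{E} \to \mathcal{V}^{\prime} \cup \mathcal{E}^{\prime}$ is an isomorphism of undirected graphs. Mimicking the construction of $\tilde{\sigma}$ in (\ref{eq:0712225327}), I would define a linear map $\tilde{\sigma} : \mathfrak{n}_{\mathcal{G}_{d}} \to \mathfrak{n}_{\mathcal{G}^{\prime}_{d^{\prime}}}$ by $\tilde{\sigma}(v) := \sigma(v)$ for $v \in \mathcal{V}$ and $\tilde{\sigma}(e) := \varepsilon_{e}\,\sigma(e)$ for $e \in \mathcal{E}$, where $\varepsilon_{e} = 1$ precisely when $d^{\prime}(\sigma(e)) = \sigma(d(e))$ and $\varepsilon_{e} = -1$ otherwise. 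Checking the defining brackets $[d(e),d^{\ast}(e)] = e$ (all other brackets of basis vectors being zero on both sides) shows directly that $\tilde{\sigma}$ is a Lie algebra isomorphism; since it carries the orthonormal basis $\mathcal{V} \cup \mathcal{E}$ to $\pm(\mathcal{V}^{\prime} \cup \mathcal{E}^{\prime})$, it is an isometry of inner products, i.e. $\tilde{\sigma}.\langle,\rangle_{\mathcal{G}^{\prime}_{d^{\prime}}} = \langle,\rangle_{\mathcal{G}_{d}}$. As both groups are simply connected nilpotent, $\tilde{\sigma}$ integrates to a Lie group isomorphism $N_{\mathcal{G}_{d}} \to N_{\mathcal{G}^{\prime}_{d^{\prime}}}$, and this map is a Riemannian isometry because it identifies the two left-invariant orthonormal frames.

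\emph{The ``only if'' direction.} If $(N_{\mathcal{G}_{d}},\langle,\rangle_{\mathcal{G}_{d}})$ and $(N_{\mathcal{G}^{\prime}_{d^{\prime}}},\langle,\rangle_{\mathcal{G}^{\prime}_{d^{\prime}}})$ are isometric, compose the isometry with a left translation so that it fixes the identity; by Wilson's theorem on homogeneous nilmanifolds (\cite{MR661539}) such an isometry is a group isomorphism, so its differential is a metric Lie algebra isomorphism $F : (\mathfrak{n}_{\mathcal{G}_{d}}, \langle, \rangle_{\mathcal{G}_{d}}) \to (\mathfrak{n}_{\mathcal{G}^{\prime}_{d^{\prime}}}, \langle, \rangle_{\mathcal{G}^{\prime}_{d^{\prime}}})$. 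Since $F$ preserves the derived subalgebra, $F(\mathrm{span}\,\mathcal{E}) = \mathrm{span}\,\mathcal{E}^{\prime}$, and being metric it also carries the orthogonal complement $\mathrm{span}\,\mathcal{V}$ isometrically onto $\mathrm{span}\,\mathcal{V}^{\prime}$. It remains to promote $F$ to an isomorphism of the combinatorial data. For this I would pass to the skew-symmetric operators $J_{Z} \in \mathfrak{so}(\mathrm{span}\,\mathcal{V})$ defined by $\langle J_{Z}x,y\rangle := \langle [x,y], Z\rangle$ for $Z \in \mathrm{span}\,\mathcal{E}$; then $F$ conjugates the family $\{J_{Z}\}$ to $\{J^{\prime}_{Z^{\prime}}\}$, and one recovers the edge vectors (up to sign and permutation) together with the incidence relation from the structure of this commuting family — this is exactly the rigidity already established for these graph Lie algebras in \cite{MR3343346}, whence $\mathcal{G} \cong \mathcal{G}^{\prime}$ as undirected graphs.

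\emph{Main obstacle.} The delicate step is this last reconstruction. An abstract metric Lie algebra isomorphism need not visibly respect the vertex/edge basis, and the tempting ``minimal rank'' characterization of edge vectors is false: a vector of the form $a \wedge \bigl(\sum_{e \ni a} c_{e}\,b_{e}\bigr)$ supported on the star at a vertex $a$ still gives $J_{Z}$ of rank two. Distinguishing genuine edge directions inside $\mathrm{span}\,\mathcal{E}^{\prime}$ and reassembling the incidence structure is therefore the crux, and it is precisely the point I would import from \cite{MR3343346, MR661539}; everything else is the bookkeeping sketched above.
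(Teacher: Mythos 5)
Your proposal is correct and matches the paper's treatment: the paper gives no proof of this theorem but attributes it exactly to the two ingredients you use, namely Wilson's rigidity theorem (isometries of simply connected nilpotent Riemannian Lie groups fixing the identity are automorphisms, reducing the problem to metric Lie algebra isomorphism) and Mainkar's theorem that $\mathfrak{n}_{\mathcal{G}}\cong\mathfrak{n}_{\mathcal{G}'}$ as Lie algebras if and only if $\mathcal{G}\cong\mathcal{G}'$ as undirected graphs. Your explicit sign bookkeeping for the ``if'' direction and your honest identification of the reconstruction of the graph from the Lie algebra as the step imported from \cite{MR3343346} are both consistent with how the paper uses the result.
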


Denote by $\mathfrak{G}_{p,q} := \{[\mathcal{G}]\mid  \mathcal{G} = (\mathcal{V}, \mathcal{E})\mbox{ : simple graph}, \ \sharp \mathcal{V} = p, \ \sharp \mathcal{E} = q\}$,
where $[\mathcal{G}]$ means the isomorphism class of $\mathcal{G}$.
By Theorem~\ref{theo:0904000546}, the map
\begin{equation*}
  \mathfrak{G}_{p,q} \to \mathfrak{M}(\R^{p+q})/_{\! \sim}, \  [\mathcal{G}] \mapsto [\langle, \rangle_{\mathcal{G}_{d}}] 
\end{equation*}
is well-defined and injective.
In conclusion, we obtain  the following corollary for Theorem~\ref{theo:0810135716}
which states
that one can construct as many maximal metrics on the Euclidean spaces as many edge-transitive graphs,
hence there are plenty of maximal metrics on the Euclidean spaces.
\begin{cor}
  \label{cor:0712175939}
  Denote by $\mathfrak{G}_{p,q}^{edge} := \{[\mathcal{G}] \in \mathfrak{G}_{p,q} \mid \mathcal{G} \mbox{ : edge-transitive}\}$.
  Then the injective map $\mathfrak{G}_{p,q}^{edge} \to \mathfrak{M}(\R^{p+q})/_{\! \sim}$ maps each $[\mathcal{G}]$ to maximal elements $[\langle, \rangle_{\mathcal{G}_{d}}]$ in $\mathfrak{M}(\R^{p+q})/_{\! \sim}$.
  
\end{cor}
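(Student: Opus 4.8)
The plan is to assemble the two theorems that immediately precede the statement. First I would recall why the map under discussion is well-defined and injective. By the remarks following Theorem~\ref{theo:0904000546}, the nilpotent Lie group $N_{\mathcal{G}_d}$ is diffeomorphic to $\R^{p+q}$ through the basis $\mathcal{V}\cup\mathcal{E}$, so $\langle,\rangle_{\mathcal{G}_d}$ is a genuine Riemannian metric on $\R^{p+q}$; Theorem~\ref{theo:0904000546} shows both that the class $[\langle,\rangle_{\mathcal{G}_d}]\in\mathfrak{M}(\R^{p+q})/_{\!\sim}$ does not depend on the chosen direction $d$, and that non-isomorphic graphs yield non-isometric (hence $\not\sim$-inequivalent) metrics. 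Restricting the resulting injective map $\mathfrak{G}_{p,q}\to\mathfrak{M}(\R^{p+q})/_{\!\sim}$ to the subset $\mathfrak{G}_{p,q}^{edge}$ of edge-transitive isomorphism classes preserves injectivity, which is the first half of the claim.

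Second, I would invoke Theorem~\ref{theo:0810135716}: for an edge-transitive graph $\mathcal{G}$ and any direction $d$, the left-invariant metric $\langle,\rangle_{\mathcal{G}_d}$ is maximal. By Definition~\ref{defi:1207154844}, saying that $\langle,\rangle_{\mathcal{G}_d}$ is maximal means precisely that its class $[\langle,\rangle_{\mathcal{G}_d}]$ is a maximal element of the preordered set $(\mathfrak{M}(\R^{p+q})/_{\!\sim},\prec)$. Hence each point in the image of $\mathfrak{G}_{p,q}^{edge}\to\mathfrak{M}(\R^{p+q})/_{\!\sim}$ is a maximal element, which is the second half of the claim.

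I do not expect a genuine obstacle at this stage: the substance lives in Theorem~\ref{theo:0810135716}, where maximality is reduced by Proposition~\ref{prop:0803132739} to the linear-algebraic condition~(\ref{eq:0803132915}), which in turn reduces by Lemma~\ref{lemm:0721175133} and Lemma~\ref{lemm:0810134701} to showing that the edge-diagonal entries $\theta(e,e)$ vanish, and edge-transitivity forces these to be constant and hence zero by the trace relation $\sum_{e\in\mathcal{E}}\theta(e,e)=0$; and in Theorem~\ref{theo:0904000546}. The corollary itself is a packaging statement, and the only point I would be careful to spell out is that edge-transitivity is exactly what links the two inputs, being simultaneously the hypothesis of Theorem~\ref{theo:0810135716} and the defining property of the index set $\mathfrak{G}_{p,q}^{edge}$.
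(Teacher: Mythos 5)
Your proposal is correct and follows exactly the paper's route: the corollary is obtained by combining the well-definedness and injectivity of $[\mathcal{G}]\mapsto[\langle,\rangle_{\mathcal{G}_{d}}]$ from Theorem~\ref{theo:0904000546} with the maximality statement of Theorem~\ref{theo:0810135716}. Nothing further is needed.
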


For examples, the following three edge-transitive graphs give three different maximal metrics on $\R^{10}$:

\begin{figure}[htbp]
  \begin{minipage}[b]{0.3\linewidth}
    \centering
    \includegraphics[bb=0 0 587 587, scale=0.2]{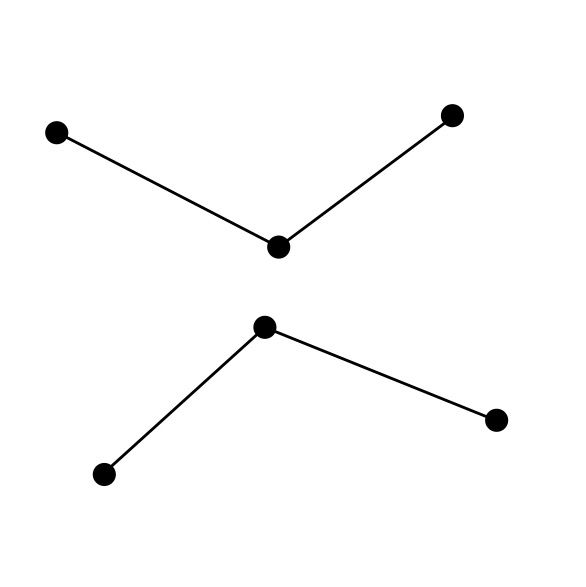}
    \subcaption{vertex: 6, edge: 4}
  \end{minipage}
  \begin{minipage}[b]{0.3\linewidth}
    \centering
    \includegraphics[bb=0 0 587 587, scale=0.2]{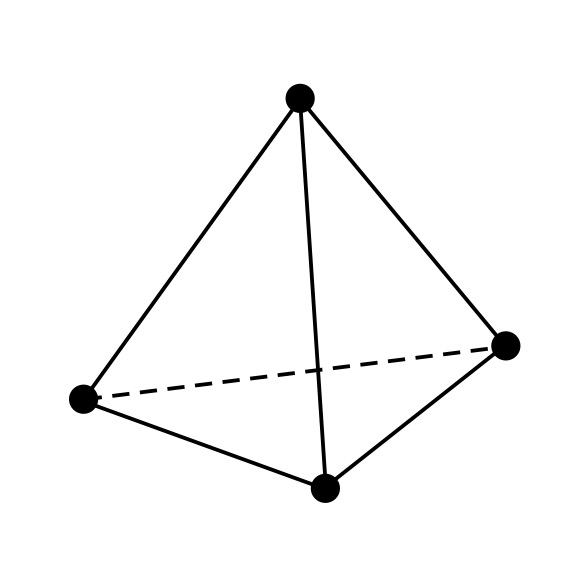}
    \subcaption{vertex: 4, edge: 6}
  \end{minipage}
    \begin{minipage}[b]{0.3\linewidth}
    \centering
    \includegraphics[bb=0 0 587 587, scale=0.2]{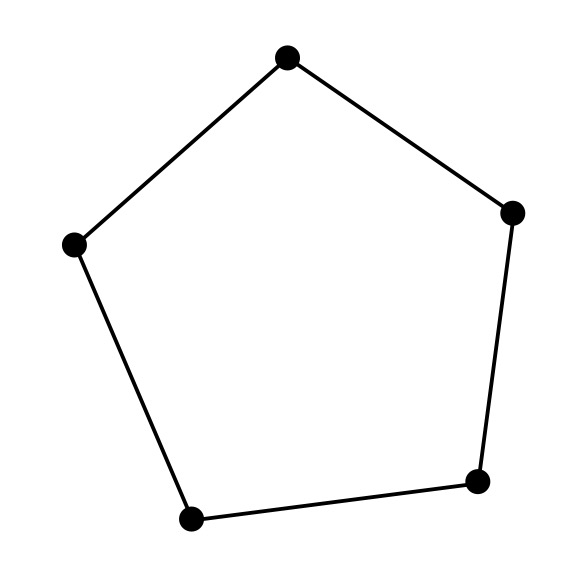}
    \subcaption{vertex: 5, edge: 5}
  \end{minipage}
\end{figure}




\end{document}